\documentclass[11pt, notitlepage]{article}
\usepackage{amssymb,amsmath,comment}
\usepackage{amsthm}
\allowdisplaybreaks
\catcode`\@=11 \@addtoreset{equation}{section}
\def\thesection{\arabic{section}}

\def\theequation{\thesection.\arabic{equation}}
\catcode`\@=12
\usepackage{colortbl}
\usepackage{hyperref}
\usepackage[mathscr]{eucal}
\usepackage{epsf}
\usepackage{esint}
\usepackage{a4wide}
\def\R{\mathbb{R}}

\DeclareMathOperator*{\esssup}{ess\,sup}
\DeclareMathOperator*{\essinf}{ess\,inf}

\newcommand{\De} {\Delta}
\newcommand{\la} {\lambda}

\newcommand{\noi} {\noindent}

\usepackage[all]{xy}
\catcode`\@=11
\def\theequation{\@arabic{\c@section}.\@arabic{\c@equation}}
\catcode`\@=12

\newtheorem{Theorem}{Theorem}[section]
\newtheorem{Lemma}[Theorem]{Lemma}
\newtheorem{prop}[Theorem]{Proposition}
\newtheorem{Corollary}[Theorem]{Corollary}
\newtheorem{Remark}[Theorem]{Remark}
\newtheorem{Definition}[Theorem]{Definition}
\newtheorem{Example}{Example}

\begin{document}
{\vspace{0.01in}}

\title{Symmetry and Approximate Symmetry for Solutions of Mixed Local-Nonlocal Singular Equations}
\author{Sanjit Biswas\\
Department of Mathematical Sciences\\
Indian Institute of Science Education and Research Berhampur}

\maketitle

\begin{abstract}\noindent
In this article, we establish radial symmetry for positive weak solutions of a class of mixed local-nonlocal equations with possibly singular nonlinearity via the moving plane method. Furthermore, we provide a quantitative version of Gidas-Ni-Nirenberg type theorem for mixed local-nonlocal equations. In this regard, we establish a weak Harnack-type inequality and an analogue of the Alexandroff-Bakelman-Pucci inequality in the mixed nonhomogeneous setting with a lower order term, which appear to be new. To the best of our knowledge, this paper initiates the study of the quantitative properties of solutions to mixed problems.
\end{abstract}

\maketitle

\noi {Keywords: Mixed local-nonlocal problems, singular nonlinearity, symmetry, approximate symmetry, moving plane method.}

\noi{\textit{2020 Mathematics Subject Classification: 35M10, 35M12, 35J75, 35R06, 35R11}

\bigskip

\tableofcontents

\section{Introduction and main results}
This article is devoted to the study of symmetry and approximate symmetry of weak solutions to a class of semilinear equations of the form:
\begin{align}\label{ME1}
        -\De u+(-\De )^su=f(x,u) \text{ in }\Omega\setminus\Gamma,
    u>0 \text{ in }\Omega\setminus\Gamma\text{ and }u=0\text{ in }\R^n\setminus\Omega,
\end{align}
where $\Omega\subset\R^n$ is a bounded domain with $C^{1,1}$-type boundary and $u$ is singular on the closed set $\Gamma\subset\Omega$. Here, $\De$ denotes the usual Laplacian operator and the operator $(-\De)^s$ is known as the fractional Laplacian and defined by
 \begin{equation*}
     (-\De)^su(x):=P.V.\int_{\R^n} \frac{(u(x)-u(y))}{|x-y|^{n+2s}}\;dy,
 \end{equation*}
 where $s\in(0,1)$ and P.V. stands for the principal value integral. Note that some definitions of the fractional Laplacian include a normalization constant, but for simplicity, we omit it here. The equations involving mixed local and nonlocal operators have numerous applications in stochastic processes, image processing, biology, etc.; for more details, we refer to \cite{serena} and the references therein. The study of mixed operators has been a central focus of many researchers; for instance, see \cite{ Valdinoci23, BiagCCM, BG2, BG26, Mingioni24, ValdinociProc, GJ, ValdinociJDE25} and the references therein. In this article, we plan to establish several quantitative and qualitative properties of weak solutions to equation (\ref{ME1}). To this concern, we assume the following conditions on the continuous function $f:\overline{\Omega}\setminus\Gamma\times (0,\infty)\to \R$:
\begin{itemize}
    \item[($\mathscr{F}_1$)] For $0<t\leq s\leq b<\infty$ and for every $\omega\Subset \overline{\Omega}\setminus\Gamma$, there exists $\mathscr{K}_1=\mathscr{K}_1(b,\omega)>0$ such that $$f(x,s)-f(x,t)\leq \mathscr{K}_1(s-t)\text{ for all }x\in \omega.$$
    \item[($\mathscr{F}_2$)] For every $I\Subset (0,\infty)$, and $\omega\Subset \Omega\setminus\Gamma$, there exists $\mathscr{K}_2=\mathscr{K}_2(I,\omega)>0$ such that $$|f(x,s)-f(x,t)|\leq \mathscr{K}_2|s-t|\text{ for all }s,t\in I,\;x\in \omega.$$
\end{itemize}
The following singular equation is one of the prototypes of equation (\ref{ME1}) with $\Gamma=\phi$:
\begin{align}\label{A1}
    -\De u+\epsilon(-\De )^su=\gamma u^{-\delta}+h(u) \text{ in } \Omega,
    u>0 \text{ in }\Omega \text{ and }u=0\text{ in }\R^n\setminus \Omega,
\end{align}
where $\delta>0,\epsilon>0,\gamma>0$ and $h:[0,\infty)\to\R$ is a locally Lipschitz continuous. Here, the positivity of $\delta$ leads to a blow-up of the nonlinearity near the origin, a phenomenon referred to as singularity. When $h$ satisfies the subcritical or critical growth conditions, the existence of multiple solutions to equation (\ref{A1}) was addressed in \cite{ BV24, Sanjit, PG}, while purely local and purely nonlocal cases were studied in \cite{ADICCM, Haitao} and \cite{ST16}.

The symmetry and monotonicity of solutions to purely local equations have been widely studied in the literature. In the seminal work of Gidas, Ni, and Nirenberg \cite{Gidas}, it was shown that every classical solution $u\in C^2(\Omega)\cap C(\overline{\Omega})$ to the purely local equation 
\begin{align}\label{A2}
    -\Delta u=h(x,u)\text{ in }\Omega, u>0 \text{ in }\Omega \text{ and }u=0\text{ on }\partial\Omega
\end{align}
is radially symmetric and monotonically decreasing in the radial direction, provided that $\Omega$ is a ball in $\R^n$ and $h:\overline{\Omega}\times[0,\infty)\to\R$ is of type $C^1$. For the whole space $\R^n$, the symmetry and monotonicity of solutions to a class of semilinear equations were obtained in \cite{CaffCPAM, Chen91}. The Gidas-Ni-Nirenberg type results for quasilinear equations can be found in \cite{DP98, DS}, while purely local and mixed local-nonlocal cases were addressed by Chen et al. \cite{ChenLi17} and Valdinoci et al. \cite{BV21}, respectively. 

Very recently, a quantitative version of the Gidas-Ni-Nirenberg theorem was obtained by Cozzi et al. in \cite{Cozzi24}. More precisely, the authors quantitatively measure the deviation of classical solutions to the equation
\begin{align}\label{MM2}
         -\Delta u=k(x)g(u)\text{ in }\Omega,
        u>0\text{ in }\Omega \text{ and }u=0 \text{ on }\partial\Omega,
\end{align}
from radial function, where $\Omega$ is the unit ball in $\R^n$, $g:[0,\infty)\to[0,\infty)$ is a locally Lipschitz continuous function, and the function $k:\overline{\Omega}\to (0,\infty)$ belongs to $C^1(\overline{\Omega})$. Furthermore, in \cite{Cozzi25}, the authors established several quantitative estimates for classical solutions to equation (\ref{MM2}) in the whole space $\R^n$ using the moving plane technique, while the quasilinear analogues of these results were addressed in \cite{Gatti}. More qualitative results can be found in \cite{Li25, Serena25}.

The radial symmetry and monotonicity properties of solutions to equations involving singular nonlinearities of the form 
\begin{align}\label{ME00}
         -\Delta u=u^{-\delta}+h(u)\text{ in }\Omega,
        u>0\text{ in }\Omega\text{ and }u=0 \text{ on }\partial \Omega,
\end{align}
have also been extensively studied, where $\delta>0$ and $h:[0,\infty)\to\mathbb{R}$ is a locally Lipschitz continuous function. The presence of the singular source term $u^{-\delta}$ causes the nonlinearity to lose local Lipschitz continuity near zero, and as a result, classical symmetry results are no longer directly applicable to solutions of \eqref{ME00}. For the existence and uniqueness, we refer the reader to \cite{Boccardo10, Sciunzi16, CRT, LM} and the references therein, while the purely nonlocal problem was addressed in \cite{Sciunzi17}. In \cite{SciunziJDE13, SciunziNA}, Sciunzi et al. established the Gidas–Ni–Nirenberg type results for solutions of (\ref{ME00}) using the moving plane technique. A similar result for the purely nonlocal equation was investigated by Giacomoni et al. in \cite{Rakesh20}.
     
In the mixed local–nonlocal setting, Garain and Anthal in \cite{GG25} studied the equation
  \begin{align}
       -\Delta u+(-\Delta)^su=u^{-\delta}+h(u)\text{ in }\Omega,
        u>0\text{ in }\Omega\text{ and }u=0 \text{ on }\mathbb{R}^n\setminus \Omega,
  \end{align}
  and showed the radial symmetry of weak solutions to the equation under the assumptions that $h:[0,\infty)\to [0,\infty)$ is locally Lipschitz, non-decreasing and $h(u)>0$ when $u>0$. Their approach relies on a decomposition technique combined with the moving plane method. In this article, we provide an alternative proof of their result that accommodates a broader class of singular nonlinearities. Our first theorem can be stated as follows:
  \begin{Theorem}[Symmetry]\label{T1}
    Let $\Omega\subset\R^n$ be a smooth domain that is convex with respect to the $x_1$-direction and symmetric with respect to the hyperplane $\{x_1=0\}$. Assume that $\Gamma\subset \{x\in\Omega: x_1=0\}$ is a closed set such that $\Gamma$ is a point when $n=2$ and $\mathrm{cap}_2(\Gamma)=0$ when $n\geq 3$. Furthermore, we assume that $f:\overline{\Omega}\setminus\Gamma\times(0,\infty)\to\R$ is a continuous function which satisfies the hypotheses $(\mathscr{F}_1), (\mathscr{F}_2)$, and 
    \begin{align}\label{mp}
        f(x_1,x_2,...,x_n,u)\leq f(y_1,x_2,...,x_n,u)\text{ for all }0<y_1<x_1,
    \end{align}
    and 
    \begin{align}\label{sp}
        f(x_1,x_2,...,x_n,u)=f(-x_1,x_2,...,x_n,u)\text{ for every }x=(x_1,x_2,...,x_n)\in\Omega.
    \end{align}
    Suppose that $u\in H^1_{\mathrm{loc}}(\Omega\setminus\Gamma)\cap C(\overline{\Omega}\setminus\Gamma)$ is a weak solution to \eqref{ME1} in the sense of Definition \ref{def1}. Then $u$ is symmetric with respect to the hyperplane $\{x_1=0\}$. 
\end{Theorem}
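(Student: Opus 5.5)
We prove the statement with the moving plane method in the $x_1$-direction, adapted to the mixed operator and to the singular set $\Gamma$ in the spirit of \cite{SciunziJDE13,SciunziNA,BV21}. For $\lambda<0$ set $T_\lambda=\{x_1=\lambda\}$, $\Sigma_\lambda=\{x\in\Omega:x_1<\lambda\}$, $x_\lambda=(2\lambda-x_1,x_2,\dots,x_n)$, $u_\lambda(x)=u(x_\lambda)$ and $w_\lambda=u-u_\lambda$, defined on all of $\R^n$. Since $\Gamma\subset\{x_1=0\}$, for $\lambda<0$ the reflected singular set $R_\lambda(\Gamma)$ lies in $\{x_1=2\lambda\}$, which is inside $\Sigma_\lambda$ or outside $\Omega$. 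We aim to show $w_\lambda\le 0$ in $\Sigma_\lambda$ for every $\lambda\in(a,0)$, where $a=\inf_\Omega x_1$. Letting $\lambda\to0^-$ gives $u(x_1,x')\le u(-x_1,x')$ for $x_1<0$. The reverse inequality follows in the same way by moving planes from the right, using \eqref{sp} to make the reflected problem identical. Together these give the symmetry.

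\textbf{Step 1 (weak comparison with cut-off of $\Gamma$).} The main obstacle is that $u$ is only in $H^1_{\rm loc}(\Omega\setminus\Gamma)$, so $w_\lambda^+\chi_{\Sigma_\lambda}$ is not an admissible test function. We therefore test the equations for $u$ and $u_\lambda$ with $\varphi=w_\lambda^+\psi_\varepsilon^2\chi_{\Sigma_\lambda}$. Here $\psi_\varepsilon$ is a capacity cut-off: it vanishes near $R_\lambda(\Gamma)$, equals $1$ away from an $\varepsilon$-neighbourhood, and satisfies $\|\nabla\psi_\varepsilon\|_{L^2}\to0$. Such a cut-off exists because $\mathrm{cap}_2(\Gamma)=0$, or for $n=2$ by the logarithmic cut-off around a point. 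Near $\partial\Omega\cap\partial\Sigma_\lambda$ we have $u=0\le u_\lambda$, so $w_\lambda^+$ vanishes there. Moreover $w_\lambda^+$ is bounded near $R_\lambda(\Gamma)$, since there $u$ is continuous and $u_\lambda$ blows up or is large, so $u\le u_\lambda$ in a neighbourhood. Hence $\varphi$ is compactly supported away from the singularities, and $u,u_\lambda$ are bounded on its support.

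The local part gives $\int|\nabla w_\lambda^+|^2\psi_\varepsilon^2$ plus an error bounded by $C\|\nabla\psi_\varepsilon\|_{L^2}$, which tends to $0$. For the nonlocal part we use the antisymmetry $w_\lambda(x_\lambda)=-w_\lambda(x)$ and the standard splitting of $\R^n$ into $\Sigma_\lambda$ and its reflection, as in \cite{BV21}. This shows that the nonlocal bilinear form evaluated on $(w_\lambda,\varphi)$ is bounded below by a nonnegative quantity, up to a vanishing error. On the right-hand side we use \eqref{mp}: for $x\in\Sigma_\lambda$ with $x_1<\lambda<0$, the point $x_\lambda$ satisfies $|(x_\lambda)_1|\le|x_1|$, so with \eqref{sp} we get $f(x_\lambda,t)\ge f(x,t)$. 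Hence $f(x,u)-f(x_\lambda,u_\lambda)\le f(x,u)-f(x,u_\lambda)\le\mathscr K_1 w_\lambda^+$ on $\{w_\lambda>0\}$, by $(\mathscr F_1)$ and the boundedness of $u,u_\lambda$ there. Letting $\varepsilon\to0$ yields
\[
\int_{\Sigma_\lambda}|\nabla w_\lambda^+|^2\le \mathscr K_1\int_{\Sigma_\lambda}(w_\lambda^+)^2\le \mathscr K_1 C_P(\Sigma_\lambda^+)\int_{\Sigma_\lambda}|\nabla w_\lambda^+|^2 .
\]
The Poincaré constant here is that of the set where $w_\lambda^+\neq0$, and it is small when that set has small measure.

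\textbf{Step 2 (start and continuation).} For $\lambda$ close to $a$, $|\Sigma_\lambda|$ is small, so $w_\lambda^+\equiv0$. Let $\lambda_0=\sup\{\lambda<0: w_\mu\le0\text{ in }\Sigma_\mu\ \forall\mu\le\lambda\}$ and suppose $\lambda_0<0$. By continuity, $w_{\lambda_0}\le0$. A strong comparison principle, obtained from the weak Harnack inequality for the mixed operator with the lower-order term controlled by $(\mathscr F_2)$, applied on compacts away from $\Gamma$ and $R_{\lambda_0}(\Gamma)$, gives $w_{\lambda_0}<0$ in $\Sigma_{\lambda_0}$. For this we use that $w_{\lambda_0}\not\equiv0$, which holds because $u>0=u_{\lambda_0}$ on the reflection of $\Omega$ outside. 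Then on a compact $K\subset\Sigma_{\lambda_0}$, also avoiding the singular sets, we have $w_{\lambda_0}\le-\eta<0$. By uniform continuity, $w_\lambda<0$ on $K$ for $\lambda\in[\lambda_0,\lambda_0+\bar\varepsilon)$. The support of $w_\lambda^+$ then lies in $\Sigma_\lambda\setminus K$, a set of small measure, and Step 1 gives $w_\lambda^+=0$. This contradicts the definition of $\lambda_0$, so $\lambda_0=0$.

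I expect the delicate points to be the uniform smallness of the support near the moving singularity $R_\lambda(\Gamma)$ together with the cut-off errors in the nonlocal term. I would handle them by first showing that $u_\lambda\ge u$ in a fixed small neighbourhood of $R_\lambda(\Gamma)$, using the continuity of $u$ and the behaviour of $u$ near $\Gamma$, and then including that neighbourhood in $K$.
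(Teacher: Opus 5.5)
Your overall scheme is the paper's: a $2$-capacity cut-off around $R_\lambda\Gamma$ to make a truncation of $w_\lambda^+$ admissible, antisymmetry to bound the nonlocal form from below up to vanishing cut-off errors, a small-measure Poincar\'e inequality to start and to continue, and a strong comparison at the critical position followed by a compact $K$ on which $w_\lambda<0$. The paper uses the antisymmetric test function, equal to $(u-u_\lambda)^+$ on one side of $T_\lambda$ and $-(u-u_\lambda)^-$ on the other, with reflection-symmetric cut-offs. Your one-sided $w_\lambda^+\psi_\varepsilon^2\chi_{\Sigma_\lambda}$ also works. Two points in your write-up are wrong or missing, although neither changes the structure.

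\textbf{The claim about $u_\lambda$ near $R_\lambda\Gamma$.} You assert that $u_\lambda$ ``blows up or is large, so $u\le u_\lambda$ in a neighbourhood'' of $R_\lambda\Gamma$. This is not available.
\begin{itemize}
\item The hypotheses say nothing about $u$ near $\Gamma$; any solution in all of $\Omega$ is admissible.
\item $u\le u_\lambda$ near $R_\lambda\Gamma$ is exactly the kind of comparison the method is meant to prove.
\item Hence the remedy in your last paragraph, putting such a neighbourhood into $K$, would fail in general.
\end{itemize}
It is also unnecessary. Since $\overline{\Sigma_\lambda}$ is at distance $|\lambda|$ from $\Gamma$, you have $0\le w_\lambda^+\le u\le\|u\|_{L^\infty(\Sigma_\lambda)}$. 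On $\{w_\lambda>0\}$ you also have $u_\lambda<u$, so $(\mathscr F_1)$ applies with $b=\|u\|_{L^\infty(\Sigma_\lambda)}$. All cut-off errors, local and nonlocal, are then bounded by $C\|u\|_{L^\infty(\Sigma_\lambda)}^2$ times quantities that vanish with $\varepsilon$, without any information on $u_\lambda$ near $R_\lambda\Gamma$. This is what the paper does. In the continuation step, the neighbourhood of $R_{\lambda_0}\Gamma$ belongs to the small-measure set $\Sigma_\lambda\setminus K$, not to $K$; this is possible because $R_{\lambda_0}\Gamma$ lies in a hyperplane.

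\textbf{Compact support near the boundary.} At $T_\lambda\cap\partial\Omega$ both $u$ and $u_\lambda$ vanish. So $w_\lambda^+$ need not vanish in a neighbourhood there, and your $\varphi$ is not compactly supported in $\Omega$ as claimed. The paper handles this with a second capacity cut-off $\phi_2$ around $T_\lambda\cap\partial\Omega$, a set of zero $2$-capacity. Alternatively, you can work with $(w_\lambda-\delta)^+\le(u-\delta)^+$, which vanishes near $\partial\Omega$, and let $\delta\to0$.
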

  \begin{Remark}
      The function $f$ is allowed to take negative values, and the map $s\to f(x,s)$ is not assumed to be monotonically decreasing---assumptions that played a central role in \cite[Theorem 2.14]{GG25}. Our result therefore extends \cite[Theorem 2.14]{GG25}.
  \end{Remark}
 \begin{Remark}
      Let us briefly discuss our assumptions on the singular set $\Gamma$. For purely local case, in \cite[Theorem 1.3]{SciunziJDE18}, the authors have assumed that the singular set $\Gamma$ has zero 2-capacity when $n\geq 3$ and is a point when $n=2$, while in \cite[Theorem 1.1]{SciunziJMPA17} the author assumed that $\Gamma\subset\{(x_1,x_2,...,x_n)\in \Omega:x_1=0\}$ is contained in a submanifold of dimension $d\leq n-2$ when $n\geq 3$ and is a point when $n=2$. Subsequently, in \cite[Example 1, Example 2, page 630]{BV20}, it was shown that the zero 2-capacity assumption is somehow sharp for purely local case. 
      
      In the mixed case, since local operator has a higher order, motivated by \cite[Theorem 1.3]{SciunziJDE18}, we assume the singular set $\Gamma$ has zero $2$-capacity for $n\geq 3$ and it is a point when $n=2$. 
      
    This condition on $\Gamma$ implies that there exists a sequence of test functions supported in a neighborhood of $\Gamma$ with the energy converges to zero, which is crucial for initiating the moving plane technique. Furthermore, the Example \ref{necessary} demonstrates that our assumption is somehow sharp. Since there exists a set with zero $s$-capacity but positive $2$-capacity, the condition $\mathrm{cap}_2(\Gamma)=0$ can not be replaced by $\mathrm{cap}_s(\Gamma)=0$ for our argument. 
  \end{Remark}
To establish this result, we mainly use the moving plane technique (see \cite{BV21}). However, it is worth emphasizing that the solutions belong to the local Sobolev space, and hence the moving plane technique can not be applied directly. To overcome this, we adopt the methodologies introduced in \cite{SciunziJDE18, Luigi18}. Specifically, to start the moving plane procedure, we show that a suitably constructed auxiliary function belongs to the Sobolev space (see Lemma \ref{MLemma}), which requires several careful estimates. Finally, using the moving plane technique, we obtain our desired result.

This theorem leads us to the following important corollary.
\begin{Corollary}\label{Cor1}
Assume that $\Omega\subset\R^n$ is a ball and $f:\overline{\Omega}\times (0,\infty)\to \R$ is a continuous function that satisfies hypotheses $(\mathscr{F}_1),(\mathscr{F}_2)$. Assume further that $f(x,u)=f(|x|,u)$ for all $x\in\Omega$ and $f$ is radially non-increasing. Suppose that $u\in H^1_{\mathrm{loc}}(\Omega)\cap C(\overline{\Omega})$ is a weak solution to the equation
    \begin{align}\label{ME3}
    \begin{cases}
        -\De u+(-\De )^su=f(x,u)\text{ in }\Omega,\\
    u>0 \text{ in }\Omega \text{ and }u=0\text{ in }\R^n\setminus \Omega,
    \end{cases}
\end{align}
 in the sense of Definition \ref{def1}. Then $u$ is radially symmetric in $\Omega$.
\end{Corollary}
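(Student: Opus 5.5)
The plan is to apply Theorem \ref{T1} with $\Gamma=\emptyset$ in every direction and then conclude radial symmetry by a standard rotation argument. By translation we may assume $\Omega=B_R(0)$. Fix an arbitrary unit vector $e\in\mathbb{S}^{n-1}$ and choose a rotation $\mathcal{R}\in O(n)$ with $\mathcal{R}e_1=e$. Define $v(x):=u(\mathcal{R}x)$ and $g(x,t):=f(\mathcal{R}x,t)$. Since $f(x,t)=f(|x|,t)$ and $|\mathcal{R}x|=|x|$, we have $g=f$. Both $-\Delta$ and $(-\Delta)^s$ commute with orthogonal changes of variables: the kernel $|x-y|^{-n-2s}$ is rotation invariant and the Jacobian is $1$. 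Hence the weak formulation of Definition \ref{def1} is preserved under $\varphi\mapsto\varphi\circ\mathcal{R}^{-1}$. Also $B_R$ is invariant under $\mathcal{R}$, so $v\in H^1_{\mathrm{loc}}(\Omega)\cap C(\overline{\Omega})$ is a weak solution of \eqref{ME3} with the same nonlinearity $f$.

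Next I check the hypotheses of Theorem \ref{T1} for $v$ with $\Gamma=\emptyset$. The ball is smooth, convex in the $x_1$-direction, and symmetric with respect to $\{x_1=0\}$. The conditions on $\Gamma$ are vacuous for the empty set; equivalently, one can quote the case where the capacity condition holds trivially. Conditions $(\mathscr{F}_1)$ and $(\mathscr{F}_2)$ are assumed. Condition \eqref{sp} holds because $|(x_1,x')|=|(-x_1,x')|$. For \eqref{mp}, take $0<y_1<x_1$. Then $|(y_1,x')|<|(x_1,x')|$, and radial non-increase of $f$ gives
\[
f(x_1,x',t)\le f(y_1,x',t).
\]
Theorem \ref{T1} therefore yields $v(-x_1,x')=v(x_1,x')$. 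Translating back, $u$ is symmetric with respect to the hyperplane $e^\perp$ through the center, for every $e\in\mathbb{S}^{n-1}$.

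Finally, symmetry with respect to every hyperplane through the origin implies radial symmetry. Given $x,y\in\Omega$ with $|x|=|y|$ and $x\neq y$, let $e=(x-y)/|x-y|$. The reflection across $e^\perp$ maps $x$ to $y$ because $|x|=|y|$, so $u(x)=u(y)$.

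The only delicate point is the use of Theorem \ref{T1} with an empty singular set. When $\Gamma=\emptyset$, the cut-off construction near $\Gamma$ is simply not needed, so the theorem applies verbatim. I also want to confirm that the weak-solution notion in Definition \ref{def1} is rotation invariant, including its requirements on test functions and on the nonlocal tail. Both points are routine, and I do not expect any real obstacle.
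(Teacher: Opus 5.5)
Your proposal is correct and follows the paper's proof, which likewise applies Theorem \ref{T1} with $\Gamma=\emptyset$ in every direction through the center; you simply spell out the rotation invariance and the final reflection argument that the paper leaves implicit. One small wording fix: you cannot ``translate'' to $B_R(0)$ without destroying the hypothesis $f(x,u)=f(|x|,u)$, so say instead that the statement implicitly takes the ball centered at the origin, as the paper does.
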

The following corollary ensures the radial symmetry of solutions to variable exponent singular problems, which is itself a new result. 
\begin{Corollary}\label{Cor2}
    Let $\Omega\subset\R^n$ be a ball and $g:[0,\infty)\to \R$ be a locally Lipschitz continuous function. Suppose $u\in H^1_{\mathrm{loc}}(\Omega)\cap C(\overline{\Omega})$ is such that $0<u(x)\leq 1$ and weakly satisfies the following equation:
\begin{align}\label{ME2}
    \begin{cases}
        -\De u+(-\De )^su=\frac{1}{u^{\delta(x)}}+g(u)\text{ in }\Omega,\\
    u>0 \text{ in }\Omega \text{ and }u=0\text{ in }\R^n\setminus \Omega,
    \end{cases}
\end{align}
 where $\delta:\overline{\Omega}\to (0,\infty)$ is radial and non-increasing in the radial direction. Then $u$ is radially symmetric in $\Omega$. 
\end{Corollary}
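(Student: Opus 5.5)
The plan is to deduce Corollary \ref{Cor2} from Corollary \ref{Cor1} by taking
\begin{equation*}
f(x,t):=t^{-\delta(x)}+g(t),\qquad x\in\overline{\Omega},\ t>0.
\end{equation*}
Then \eqref{ME2} is exactly \eqref{ME3} with $\Gamma=\emptyset$, and $u\in H^1_{\mathrm{loc}}(\Omega)\cap C(\overline{\Omega})$ is a weak solution in the sense of Definition \ref{def1}. It remains to check that $f$ satisfies the hypotheses of Corollary \ref{Cor1}.

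\emph{Continuity and radiality.} We take $\delta$ continuous on $\overline{\Omega}$; this is implicit in requiring $f$ continuous. Then $f$ is continuous on $\overline{\Omega}\times(0,\infty)$. Since $\delta$ is radial, $f(x,t)=f(|x|,t)$.

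\emph{$(\mathscr{F}_2)$.} For $I\Subset(0,\infty)$, say $I\subset[a,b]$ with $a>0$, the map $t\mapsto t^{-\delta(x)}$ has derivative $-\delta(x)t^{-\delta(x)-1}$. This derivative is bounded uniformly for $t\in[a,b]$ and $x\in\overline{\Omega}$, because $\delta$ is bounded on the compact set $\overline{\Omega}$. Together with the Lipschitz continuity of $g$ on $[a,b]$, this gives $(\mathscr{F}_2)$.

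\emph{$(\mathscr{F}_1)$.} For $0<t\le s\le b$, the singular part is non-increasing in $t$, so $s^{-\delta(x)}-t^{-\delta(x)}\le 0$. Hence
\begin{equation*}
f(x,s)-f(x,t)\le g(s)-g(t)\le L_b(s-t),
\end{equation*}
where $L_b$ is the Lipschitz constant of $g$ on $[0,b]$. This holds on all of $\overline{\Omega}$, not just on compact subsets, and it is precisely where the singularity at $t=0$ is harmless.

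\emph{Radial monotonicity, the main point.} We need $r\mapsto f(r,t)$ non-increasing for every fixed $t$ in the range of $u$. Since $0<u\le1$, only $t\in(0,1]$ matters. For such $t$, $\log t\le 0$, so
\begin{equation*}
t^{-\delta(r)}=e^{-\delta(r)\log t}
\end{equation*}
is a non-decreasing function of $\delta(r)$. As $\delta$ is non-increasing in $r$, $t^{-\delta(r)}$ is non-increasing in $r$. The term $g(t)$ does not depend on $x$.

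To make this fit the hypotheses of Corollary \ref{Cor1} literally, we modify $f$ for $t>1$, for instance by setting $f(x,t):=t^{-\delta(x)}+g(t)$ for $t\le1$ and $f(x,t):=1+g(t)$ for $t>1$. This modification is continuous across $t=1$, keeps $(\mathscr{F}_1)$ and $(\mathscr{F}_2)$, and is radially non-increasing for every $t$. It does not change the equation satisfied by $u$, since $u\le1$.

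With all hypotheses verified, Corollary \ref{Cor1} yields that $u$ is radially symmetric in $\Omega$. The only delicate step is the radial monotonicity, and it is exactly where the assumption $u\le1$ is used. For $t>1$ the monotonicity would reverse, which is why the truncation is needed. I expect no other obstacle.
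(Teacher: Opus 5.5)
Your proposal is correct and follows the paper's route: the paper's proof consists of the single observation that, since $0<u\le 1$, the right-hand side of \eqref{ME2} satisfies the hypotheses of Theorem \ref{T1}, applied in every direction exactly as in Corollary \ref{Cor1}. Your explicit truncation of $f$ for $t>1$ adds a useful piece of rigor. It makes precise what the paper leaves implicit: the radial monotonicity of $t^{-\delta(x)}$ holds only for $t\le 1$, and it is only ever used at values attained by $u$.
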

 Our second main theorem provides a quantitative version of Corollary \ref{Cor1} and it can be stated as follows.
\begin{Theorem}[Approximate Symmetry]\label{T2}
     Assume that $\Omega\subset\R^n$ is the unit ball. Suppose $f:\overline{\Omega}\times[0,\infty)\to\R$ is a continuous function of the form $f(x,u)=k(x)g(u)$, where $k\in C^1(\overline{\Omega};(0,\infty))$ and $g\in \mathrm{Lip}_{\mathrm{loc}}([0,\infty);[0,\infty))$. Assume that $u\in H^1_0(\Omega)\cap C^1(\overline{\Omega})$ is a weak solution to \eqref{ME3} such that 
    \begin{align}\label{lowerbound}
        \frac{1}{C_0}(1-|x|)\leq u(x)\leq C_0 \text{ in } \Omega,
    \end{align}
    for some $C_0\geq 1.$ Then there exist positive constants $C>0$, $\gamma\in(0,1)$, depending only on $n,\; s,\; C_0,\; \|g\|_{C^{0,1}[0,C_0]},$ $\|k\|_{L^\infty(\overline{\Omega})}$ such that  $$|u(x)-u(y)|\leq C \mathrm{def}(k)^\gamma,\text{ for all }x,y\in \Omega \text{ such that } |x|=|y|.$$
Here 
    \begin{align}\label{def}
        \mathrm{def}(k)=\|\nabla^Tk\|_{L^\infty(\Omega)}+\|\partial^+_rk\|_{L^\infty(\Omega)}
    \end{align}
    with $\nabla^T=\nabla-\frac{x}{|x|}\partial_r$ and $\partial_r=\frac{x}{|x|}\cdot\nabla$.
\end{Theorem}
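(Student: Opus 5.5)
We follow the strategy of Cozzi et al.\ \cite{Cozzi24}: a quantitative moving plane argument. Fix a direction $e\in\mathbb{S}^{n-1}$, say $e=e_1$. For $\lambda\in(0,1)$ let $T_\lambda=\{x_1=\lambda\}$, $\Sigma_\lambda=\{x\in\Omega: x_1>\lambda\}$, $x^\lambda$ the reflection of $x$ across $T_\lambda$, and $w_\lambda(x)=u(x^\lambda)-u(x)$, which is antisymmetric with respect to $T_\lambda$. We aim to show that $w_\lambda\geq -C\,\mathrm{def}(k)^{\gamma}$ in $\Sigma_\lambda$ for every $\lambda\in(0,1)$, and then let $\lambda\to 0$. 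This gives $u(x^0)-u(x)\ge -C\mathrm{def}(k)^\gamma$ for $x_1>0$. Applying the same bound in the direction $-e$ gives the reverse inequality, so $|u(x)-u(x^{0,e})|\leq C\mathrm{def}(k)^\gamma$ for every reflection through a hyperplane containing the origin. Any two points with $|x|=|y|$ are related by one such reflection (the hyperplane bisecting them), which gives the statement.

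\textbf{The equation for $w_\lambda$.} Subtracting the equations at $x^\lambda$ and $x$ gives
\[
-\De w_\lambda+(-\De)^s w_\lambda=k(x^\lambda)g(u(x^\lambda))-k(x)g(u(x))=c_\lambda(x)w_\lambda+\big(k(x^\lambda)-k(x)\big)g(u(x)),
\]
where $|c_\lambda|\le \|k\|_\infty\|g\|_{C^{0,1}[0,C_0]}$. For $x\in\Sigma_\lambda$ we have $|x^\lambda|\le |x|$. Writing $k(x^\lambda)-k(x)$ as a line integral along a path made of a radial segment (where only $\partial_r^+k$ can contribute a negative part) and an arc on a sphere (controlled by $\nabla^Tk$) yields $k(x^\lambda)-k(x)\ge -C\,\mathrm{def}(k)$. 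Hence $w_\lambda$ is a supersolution of a linear mixed equation with bounded zero-order coefficient and a right-hand side whose negative part is $\le C\mathrm{def}(k)$. Antisymmetry lets us write the nonlocal part on $\Sigma_\lambda$ with the reflected kernel, as in \cite{BV21}.

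\textbf{The two tools.} We use the paper's two new estimates on $(w_\lambda)^-$. First, the ABP-type inequality (maximum principle in narrow or small-measure domains): if $D\subset\Sigma_\lambda$ has small measure, then
\[
\sup_D w_\lambda^-\le C\big(\sup_{\Sigma_\lambda\setminus D}w_\lambda^-+\mathrm{def}(k)\big).
\]
Second, the weak Harnack inequality applied to the nonnegative supersolution $w_\lambda+C\mathrm{def}(k)$ away from $T_\lambda$. It bounds $\inf_{B_r}w_\lambda$ from below by an $L^\varepsilon$-average, up to errors of order $\mathrm{def}(k)$ and the nonlocal tail of $w_\lambda^-$.

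\textbf{The argument.}
\begin{enumerate}
\item \emph{Start near the boundary.} For $\lambda$ close to $1$, $\Sigma_\lambda$ is narrow, so the ABP inequality gives $w_\lambda\ge -C\mathrm{def}(k)$.
\item \emph{Continuation.} Suppose the bound $w_\lambda\ge -\varepsilon$, with $\varepsilon=\mathrm{def}(k)^\gamma$, holds at some $\lambda$. Remove from $\Sigma_\lambda$ a thin strip near $T_\lambda$ and a thin layer near $\partial\Omega$, both of small measure. On the remaining compact core $K$, either $w_\lambda$ is quantitatively positive, or the weak Harnack inequality shows that $w_\lambda$ is small on a large set. In the second case we propagate this smallness, via chains of balls, to points where $u$ can be compared with the boundary.
\item \emph{Use of the lower bound \eqref{lowerbound}.} For $x\in\Sigma_\lambda$ near $\partial\Omega$ with $x^\lambda$ well inside $\Omega$, the bound $u\ge (1-|x|)/C_0$ together with $u\in C^1$ gives a quantitative positive lower bound on $w_\lambda$. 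This contradicts smallness everywhere, so $w_\lambda$ must be quantitatively positive on $K$. The ABP inequality on the small set $\Sigma_\lambda\setminus K$ then gives $w_\lambda\ge -C\mathrm{def}(k)$ there. Continuity in $\lambda$ lets us lower $\lambda$ by a fixed step $\tau$.
\item \emph{Iteration.} At each step the errors grow by a factor at most $C$. Running $O(\log(1/\mathrm{def}(k)))$ steps with shrinking $\tau$, or optimizing the step count against the small-measure threshold, produces the Hölder rate $\mathrm{def}(k)^\gamma$.
\end{enumerate}

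\textbf{Main obstacle.} The hardest part is the continuation step. Unlike the local case, the weak Harnack inequality carries a nonlocal tail involving $w_\lambda^-$ on all of $\Sigma_\lambda$. This tail must be absorbed using the current bound $\|w_\lambda^-\|_\infty\le\varepsilon$, while the resulting constants stay uniform in $\lambda$. The balance between the width of the removed layers, the ABP constant, and the accumulation of error is what fixes $\gamma$. First we would try choosing the layer width as a power $\mathrm{def}(k)^{\beta}$ and checking that the losses from the tail and from the ABP inequality stay polynomial in $\mathrm{def}(k)$.
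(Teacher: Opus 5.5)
\textbf{Verdict: genuine gap.} Your ingredients are the paper's: the small-measure ABP of Lemma~\ref{SML} to start near $\lambda=1$ and to control thin layers; weak Harnack on the core, combined with \eqref{lowerbound} and the gradient bound, to produce positivity; continuity to push the plane; and reduction to reflections through hyperplanes containing the origin. The gap is in how you close the argument, because the positivity generated by \eqref{lowerbound} degenerates as the plane approaches the origin.
\begin{itemize}
\item Near the test point $p_0=(1-2\eta)e_1$ you have $u\le C\eta$, by \eqref{Bdd} and $u=0$ on $\partial\Omega$.
\item There \eqref{lowerbound} only gives $u(x^\lambda)\gtrsim\lambda/C_0$, so the $L^\epsilon$-average entering the weak Harnack inequality is $\gtrsim\lambda/C_0-C\eta$. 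This forces $\eta\lesssim\lambda$.
\item Lemma~\ref{PLD} loses a factor $\eta^{-\alpha}$. So the best conclusion on the core $\Omega_{\lambda,\eta}$ of \eqref{nbd} is $\inf_{\Omega_{\lambda,\eta}}w_\lambda\ge c\,\lambda^{1+\alpha}-C\,\mathrm{def}(k)$.
\end{itemize}
Hence the contradiction in your step 3 is available only while $\lambda^{1+\alpha}\gtrsim\mathrm{def}(k)$. The continuation cannot be run down to $\lambda\to0$. Nor can $\lambda$ be lowered by a fixed step $\tau$, since the admissible shift is only $\sigma\lesssim\lambda^{1+\alpha}/\|\nabla u\|_{L^\infty(\Omega)}$.

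Your step 4 also attributes $\gamma$ to the wrong mechanism. There is no error accumulation to manage. By your own step 3, once $w_{\lambda-\sigma}>0$ on the core, the ABP inequality on the small remainder sees only the forcing, which is $O(\mathrm{def}(k))$, and returns $w_{\lambda-\sigma}\ge-\mathcal{K}\,\mathrm{def}(k)$ there. This is why the paper defines $\Lambda$ with the fixed level $\mathcal{C}\,\mathrm{def}(k)$ and argues by contradiction at $\lambda_*=\inf\Lambda$. If the error really grew by a factor $C$ per move, the moves of size $\lesssim\lambda^{1+\alpha}$ needed to reach small $\lambda$ would number polynomially, not logarithmically, in $1/\mathrm{def}(k)$, and no power rate would survive.

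The missing idea is to stop the plane at $\lambda_2=(C\,\mathrm{def}(k))^{1/(1+\alpha)}$, obtained by taking $\eta=\lambda_*/C_1$ in the contradiction argument. The last stretch is then bridged with the global Lipschitz bound:
\begin{itemize}
\item for $x_1>\lambda_2$, $u(x)-u(x^0)\le\sup w_{\lambda_2}^-+2\lambda_2\|\nabla u\|_{L^\infty(\Omega)}$;
\item for $0<x_1\le\lambda_2$, $u(x)-u(x^0)\le 2\lambda_2\|\nabla u\|_{L^\infty(\Omega)}$.
\end{itemize}
This is exactly where $\gamma=1/(1+\alpha)$ comes from.

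Your worry about the nonlocal tail is legitimate. The paper applies the tail-free Lemma~\ref{PLD} to $v_{\lambda_*}=u_{\lambda_*}-u+\mathcal{C}\,\mathrm{def}(k)$, which is nonnegative only in $\Omega_{\lambda_*}$, and the antisymmetric rewriting you mention is the natural way to justify that step. But the tail is not what fixes the exponent.
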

\begin{Remark}
    If the function $k$ is radial and radially decreasing, then $\mathrm{def}(k)=0$. Consequently, this estimate is consistent with the conclusion of Corollary \ref{Cor1}. 
\end{Remark}
In order to prove this theorem, we adopt the ideas developed in \cite{Cozzi25, Cozzi24, Gatti}. However, the absence of a weak Harnack inequality and an analogue of Alexandroff-Bakelman-Pucci-type estimate for the mixed local-nonlocal nonhomogeneous equations with lower order terms prevent us to apply their method directly. Inspired by \cite{BV24, Garain25, GJ}, we establish a weak Harnack-type inequality (Proposition \ref{WHI}) for this setting, which is based on the Giorgi-Nash-Moser theory. An iteration technique is used to obtain an ABP-type inequality, which is reminiscent of \cite{Gatti}. Subsequently, using the moving plane technique, we conclude our result.

This article is organized in the following way. In Section 2, we present all the functional spaces and several useful results. Section 3 is devoted to the proof of several preliminary results for Theorem \ref{T1}. Finally, in Sections 4 and 5, we provide a detailed proof of our main results. \\
\textbf{Notations:} For the rest of the paper, unless otherwise mentioned, we will use the following notations and assumptions:
\begin{itemize}

    \item $\Omega\subset\mathbb{R}^n$ with $n\geq2$ is a bounded domain with smooth boundary.

     
    \item For open sets $\omega$ and $\Omega$ of $\mathbb{R}^n$, by notation $\omega\Subset\Omega$, we mean that $\overline{\omega}$ is a compact subset of $\Omega$.
    
    \item For a measurable set $A\subset\mathbb{R}^n$, $|A|$ denotes the Lebesgue measure of $A$. Moreover, for a function $u:A\to\R$, we define $u^+:=\max\{ u, 0\}$ and $u^-:=\max\{-u, 0\}.$ 

    \item An $\epsilon$-neighborhood of a set $A\subset\R^n$ is denoted by $B_\epsilon(A)$ and defined as $$B_\epsilon(A):=\{x\in\R^n: d(x,A)<\epsilon\}.$$ 

    \item We say $J(\epsilon)\geq o(\epsilon)$ if $\lim_{\epsilon\to 0} J(\epsilon)\geq 0$.
    
    \item For a measurable set $B\subset\R^n$, $\fint_B$ denotes the average $\frac{1}{|B|}\int_B.$
    
    \item $C$ denotes a positive constant, whose values may change from line to line or even in the same line.

    \item For an integer $m\geq 1$, $C^m(\Omega;\R)$ is the set of all $m^{th}$-order continuously differentiable functions from $\Omega$ to $\R$.

    \item $\mathrm{Lip}_{\mathrm{loc}}(\Omega;\R)$ denotes the set of all real-valued locally Lipschitz continuous functions on $\Omega$.
\end{itemize}
\section{Preliminaries}
\subsection{Functional analytical setting}
The Sobolev space $H^{1}(\Omega)$ is defined by
$$H^{1}(\Omega):=\{ u\in L^2(\Omega): \nabla u\in L^2(\Omega,\R^n) \}$$
equipped with the norm
$$
\|u\|_{H^{1}(\Omega)}=\left(\int_{\Omega}|u(x)|^2\,dx+\int_{\Omega}|\nabla u|^2\;dx\right)^\frac{1}{2},
$$
where $\nabla u=\Big(\frac{\partial u}{\partial x_1},\ldots,\frac{\partial u}{\partial x_n}\Big)$. We say $u\in H^1_{\mathrm{loc}}(\Omega)$ if $u\in H^1(\omega)$ for every $\omega\Subset\Omega$. The fractional Sobolev space $W^{s,2}(\Omega)$ for $0<s<1$, is defined by
$$
W^{s,2}(\Omega)=\Bigg\{{u:\Omega\to\mathbb{R}:\,}u\in L^2(\Omega),\,\frac{|u(x)-u(y)|}{|x-y|^{\frac{n}{2}+s}}\in L^2(\Omega\times \Omega)\Bigg\}
$$
under the norm
$$
\|u\|_{W^{s,2}(\Omega)}=\left(\int_{\Omega}|u(x)|^2\,dx+\int_{\Omega}\int_{\Omega}\frac{|u(x)-u(y)|^2}{|x-y|^{n+2s}}\,dx\,dy\right)^\frac{1}{2}.
$$
We refer to \cite{Hitchhikersguide} and the references therein for more details on fractional Sobolev spaces. Due to the mixed behavior of our equations, we consider the space
$$
H_0^{1}(\Omega)=\{u\in W^{1,2}(\mathbb{R}^n):u=0\text{ in }\mathbb{R}^n\setminus\Omega\}
$$
under the norm
$$
\|u\|_{H_0^{1}(\Omega)}=\left(\int_{\Omega}|\nabla u|^2\,dx+\int_{\mathbb{R}^{n}}\int_{\mathbb{R}^{n}}\frac{|u(x)-u(y)|^2}{|x-y|^{n+2s}}\, dx dy\right)^\frac{1}{2}.
$$
{The following Lemma guaranties that the norm $\|u\|_{H_0^{1}(\Omega)}$ defined above is equivalent to the norm $\|u\|=\|\nabla u\|_{L^2(\Omega)}$.}
\begin{Lemma}(see \cite[Lemma $2.1$]{Silva})\label{Gagliardo}
For $0<s<1$, there exists a constant $C=C(n,s,\Omega)>0$ such that
\begin{equation}\label{locnonsem}
\int_{\mathbb{R}^n}\int_{\mathbb{R}^n}\frac{|u(x)-u(y)|^2}{|x-y|^{n+2s}}\,dx\,dy\leq C\int_{\Omega}|\nabla u|^2\,dx
\end{equation}
for every $u\in H_0^{1}(\Omega)$.
\end{Lemma}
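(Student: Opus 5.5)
The plan is to reduce the estimate to the Fourier-side characterization of the Gagliardo seminorm, using that every $u\in H_0^1(\Omega)$, extended by zero, belongs to $W^{1,2}(\R^n)$. By Plancherel, for such $u$
\begin{align*}
\int_{\R^n}\int_{\R^n}\frac{|u(x)-u(y)|^2}{|x-y|^{n+2s}}\,dx\,dy = C(n,s)\int_{\R^n}|\xi|^{2s}|\hat u(\xi)|^2\,d\xi .
\end{align*}
I will take this identity from the standard references (e.g. \cite{Hitchhikersguide}). It follows by writing $u(x)-u(y)=u(x)-u(x+h)$, applying Plancherel in $x$ for fixed $h$, and then using the identity $\int_{\R^n}\frac{|1-e^{i\xi\cdot h}|^2}{|h|^{n+2s}}\,dh=C(n,s)|\xi|^{2s}$. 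That identity holds by rotation and scaling invariance, and the constant is finite because $s\in(0,1)$.

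Next I split the frequency integral at $|\xi|=1$. On $\{|\xi|\ge1\}$ we have $|\xi|^{2s}\le|\xi|^2$. On $\{|\xi|<1\}$ we have $|\xi|^{2s}\le1$. This gives
\begin{align*}
\int_{\R^n}|\xi|^{2s}|\hat u|^2\,d\xi\le\int_{\R^n}|\xi|^2|\hat u|^2\,d\xi+\int_{\R^n}|\hat u|^2\,d\xi = C\big(\|\nabla u\|_{L^2(\R^n)}^2+\|u\|_{L^2(\R^n)}^2\big).
\end{align*}
Since $u$ vanishes outside $\Omega$, we have $\nabla u=0$ a.e. outside $\Omega$, so both norms reduce to integrals over $\Omega$.

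Finally I absorb the $L^2$ term using the Poincar\'e inequality on the bounded domain $\Omega$, namely $\|u\|_{L^2(\Omega)}\le C_P(\Omega)\|\nabla u\|_{L^2(\Omega)}$. This holds for $u$ vanishing outside $\Omega$ by density of $C_c^\infty(\Omega)$, which uses the smoothness of $\partial\Omega$, or directly by the one-dimensional argument in a slab containing $\Omega$. Combining the three steps gives \eqref{locnonsem} with $C=C(n,s)(1+C_P^2)$.

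No step is a genuine obstacle. The only point needing care is the Fourier identity with its constant. If I wanted to avoid Fourier analysis, I would instead split the $y$-integral into $|x-y|<1$ and $|x-y|\ge1$. On the near region I would use $\int|u(x+h)-u(x)|^2\,dx\le|h|^2\|\nabla u\|_2^2$, and on the far region the crude bound $4\|u\|_2^2\int_{|h|\ge1}|h|^{-n-2s}\,dh$. I would then close with Poincar\'e exactly as above.
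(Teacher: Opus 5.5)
Your proof is correct. The paper gives no argument of its own and simply imports the lemma from \cite[Lemma 2.1]{Silva}, where it is stated for a general exponent $p$. The standard proof of that statement is essentially your fallback argument:
\begin{itemize}
\item first, the real-variable embedding $W^{1,p}(\R^n)\hookrightarrow W^{s,p}(\R^n)$, obtained by splitting the Gagliardo integral into $|x-y|<1$ (controlled by $\|\nabla u\|_{L^p}$ via difference quotients) and $|x-y|\geq 1$ (controlled by $\|u\|_{L^p}$), as in \cite{Hitchhikersguide};
\item then Poincar\'e, to discard the zero-order term.
\end{itemize}

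Your primary route performs the same two-region split on the frequency side, through $|\xi|^{2s}\le 1+|\xi|^2$ and Plancherel. The architecture is therefore identical: bound the seminorm by the full $H^1$ norm, then apply Poincar\'e. The Fourier version is shorter and rests on an exact identity, but it is tied to $p=2$. The spatial splitting extends verbatim to $W^{1,p}_0$, which is why the general-$p$ source argues that way.

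Two minor remarks. First, with the paper's definition, $u\in H^1_0(\Omega)$ already lies in $W^{1,2}(\R^n)$, so no extension step is needed. Your slab argument for Poincar\'e also needs no regularity of $\partial\Omega$, so smoothness of the boundary plays no role. Second, the dependence of $C$ on $\Omega$ is unavoidable: replacing $\Omega$ by $\lambda\Omega$ and $u$ by $u(\cdot/\lambda)$ multiplies the ratio of the left side to the right side by $\lambda^{2-2s}$. In both arguments this dependence enters only through the Poincar\'e constant.
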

For subsequent Sobolev embedding, refer, for instance, to \cite{Evans}.
\begin{Lemma}\label{emb}
The embedding operators
\[
H_0^{1}(\Omega)\hookrightarrow
\begin{cases}
L^t(\Omega),&\text{ for }t\in[1,2^{*}],\text{ if }n>2,\\
L^t(\Omega),&\text{ for }t\in[1,\infty),\text{ if }n=2
\end{cases}
\]
are continuous. Moreover, they are compact except for $t=2^*$ if $n>2$. Here $2^*=\frac{2n}{n-2}$ if $n>2$.
\end{Lemma}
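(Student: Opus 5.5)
The statement is the standard Sobolev embedding for $H^1_0(\Omega)$ in the mixed setting. The plan is to reduce it to the classical embedding for $W^{1,2}_0(\Omega)$, using that membership in the mixed space is governed by the gradient norm alone.

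\emph{Step 1: identify the space.} By definition, every $u\in H^1_0(\Omega)$ lies in $W^{1,2}(\mathbb{R}^n)$ and vanishes outside $\Omega$. For a bounded smooth domain such a $u$ belongs to the classical space $W^{1,2}_0(\Omega)$. This is seen by approximating $u$ with $C_c^\infty(\Omega)$ functions, which is standard for Lipschitz (in particular smooth) boundaries. By Lemma \ref{Gagliardo}, the Gagliardo seminorm is controlled by $\|\nabla u\|_{L^2(\Omega)}$. Hence
\[
\|\nabla u\|_{L^2(\Omega)}\leq \|u\|_{H^1_0(\Omega)}\leq (1+C)^{1/2}\|\nabla u\|_{L^2(\Omega)}.
\]
So the norm on $H^1_0(\Omega)$ is equivalent to the classical Dirichlet norm, and it suffices to prove the embeddings for $W^{1,2}_0(\Omega)$ with the norm $\|\nabla u\|_{L^2(\Omega)}$.

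\emph{Step 2: continuity.} For $n>2$, apply the Gagliardo--Nirenberg--Sobolev inequality $\|u\|_{L^{2^*}}\leq C\|\nabla u\|_{L^2}$. This holds on $C_c^\infty(\Omega)$ and extends to the closure by density. Since $\Omega$ is bounded, H\"older's inequality then gives $L^{2^*}(\Omega)\hookrightarrow L^t(\Omega)$ for every $t\le 2^*$.

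For $n=2$, fix $t<\infty$ and choose $p<2$ with $p^*=2p/(2-p)\geq t$. Then $\|u\|_{L^{t}}\le C\|u\|_{L^{p^*}}\le C\|\nabla u\|_{L^p}\le C\|\nabla u\|_{L^2}$, where the last step is again H\"older on the bounded set $\Omega$.

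\emph{Step 3: compactness.} This is the only step with real content. Invoke Rellich--Kondrachov: bounded sets of $W^{1,2}_0(\Omega)$ are precompact in $L^2(\Omega)$. This follows from the Fr\'echet--Kolmogorov criterion, since extension by zero gives the translation estimate $\|u(\cdot+h)-u\|_{L^2}\le |h|\,\|\nabla u\|_{L^2}$.

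To upgrade from $L^2$ to $t<2^*$, use interpolation. For $2\le t<2^*$ there is $\theta\in(0,1]$ with
\[
\|u\|_{L^t}\le\|u\|_{L^2}^{\theta}\|u\|_{L^{2^*}}^{1-\theta}.
\]
A sequence that is bounded in $H^1_0(\Omega)$ and Cauchy in $L^2$ is therefore Cauchy in $L^t$. The case $t<2$ follows directly from H\"older's inequality, and for $n=2$ one uses a sufficiently large exponent in place of $2^*$.

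\emph{Step 4: failure at $t=2^*$.} Noncompactness for $t=2^*$ is not part of what is asserted, so it need not be proved. If desired, the concentrating rescalings $u_\lambda(x)=\lambda^{(n-2)/2}\varphi(\lambda x)$, for a fixed $\varphi\in C_c^\infty(\Omega)$, show it: they are bounded in $H^1_0(\Omega)$, converge weakly to zero, and have constant $L^{2^*}$ norm.

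The main point to check carefully is Step 1, namely that the mixed space coincides with the classical one with an equivalent norm. Everything after that is classical, as in \cite{Evans}.
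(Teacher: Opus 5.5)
Your proposal is correct and follows essentially the same route as the paper. The paper gives no separate argument: it cites the classical Sobolev and Rellich--Kondrachov embeddings in \cite{Evans} and relies on the norm equivalence of Lemma \ref{Gagliardo}, which is exactly your Step 1 reduction. One small fix in the optional Step 4: $\varphi(\lambda x)$ stays supported in $\Omega$ for large $\lambda$ only when $0\in\Omega$, so center the rescaling at a point $x_0\in\Omega$ and use $\lambda^{(n-2)/2}\varphi(\lambda(x-x_0))$ with $\varphi\in C_c^\infty(\mathbb{R}^n)$.
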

\begin{Definition}[Capacity measure]
    For a compact set $K\subset\Omega$, the $2$-capacity of $K$ is denoted by $\mathrm{cap_{\text{$2$}}(K)}$ and defined as
$$\mathrm{cap\text{$_2$}(K)}:=\inf \left \{ \int_\Omega|\nabla\phi|^2 dx : \phi\in C^\infty_c(\Omega), \phi\geq \chi_K \right \},$$
where $$\chi_K(x):=\begin{cases}
    1\text{ if } x\in K,\\
    0 \text{ otherwise.}
\end{cases}$$
\end{Definition}
Finally, $2$-capacity of any subset $B$ of $\Omega$ is defined by the standard way. For more details, we refer the reader to \cite{EvansG}. Now, we define the notion of solutions.
\begin{Definition}[Weak solution]\label{def1}
    A function $u\in  H^1_{\mathrm{loc}}(\Omega\setminus\Gamma)\cap L^1(\R^n)$ is said to be a weak solution to equation (\ref{ME1}) if $u>0 \text{ in }\Omega\setminus\Gamma,\;u=0\text{ in }\R^n\setminus\Omega$ and
    $$\int_\Omega \nabla u\cdot\nabla\phi\;dx+\int_{\R^n}\int_{\R^n}\frac{(u(x)-u(y))(\phi(x)-\phi(y))}{|x-y|^{n+2s}}\;dy\;dx=\int_\Omega f(x,u)\phi\;dx,$$
    for every $\phi\in C^1_c(\Omega\setminus\Gamma)$.
\end{Definition}

 Before we proceed further, motivated by \cite[Example 2]{BV20}, we construct an example which shows that the condition $\mathrm{cap}_2(\Gamma)=0$ in Theorem \ref{T1} is necessary.
\begin{Example}\label{necessary}
    Let $\Omega=B_1(0)\subset \R^2,\;\Gamma=\{0\}\times[-\frac{1}{2},\frac{1}{2}]$ and $s\in(0,\frac{1}{2})$. For $m\geq 2$, we define $$R_m:=\{(x_1,x_2)\in \Omega: (mx)^2+(2y)^2\leq 1\},$$
    and $\Omega_m:=\Omega\setminus R_m$. Note that $\Omega_m$ has smooth boundary. Let $\varphi_m\in C^\infty_c(\Omega)$ be such that $0\leq \varphi_m\leq 2$, $\varphi_m(\frac{1}{m},0)=1$ and $\varphi_m(-\frac{1}{m},0)=2$. Due to \cite[Proposition 2.5]{Silvestre}, we have $\Delta\varphi_m-(-\Delta)^s\varphi_m\in C^{0,1-2s}(\R^2)$. Now, we consider the following equation:
    \begin{align}\label{NC}
        \begin{cases}
            -\Delta z+(-\Delta)^sz=0\mbox{ in }\Omega_m,\\
            z=\varphi_m\mbox{ in }\Omega_m^c.
        \end{cases}
    \end{align}
    Since $\Omega_m$ is smooth, $0<s<\frac{1}{2},\mbox{ and } \Delta\varphi_m-(-\Delta)^s\varphi_m\in C^{0,1-2s}(\Omega_m)$, using \cite[Theorem 2.8]{Valdinoci23}, there exists an unique solution $v_m\in C(\R^2)\cap L^\infty(\R^2)\cap C^{2,1-2s}(\overline{\Omega_m})$ of the equation 
    \begin{align}
        \begin{cases}
            -\Delta v_m+(-\Delta)^sv_m=\Delta\varphi_m-(-\Delta)^s\varphi_m\mbox{ in }\Omega_m,\\
            z=0\mbox{ in }\Omega_m^c.
        \end{cases}
    \end{align}
    Thus, $u_m:=v_m+\varphi_m\in C^{2,1-2s}(\overline{\Omega_m})$ is the unique solution of (\ref{NC}). Using the strong maximum principle \cite[Theorem 1.3]{ValdinociCPDE}, we can conclude that $0\leq u_m\leq 2$ in $\overline{\Omega_m}$. Now, for $l\geq 2$, we define a set $O_l$ as follows:
    $$O_l:=\{x\in \Omega: d(x,\partial(\Omega\setminus\Gamma))>\frac{1}{l}\}.$$
We observe that for every fixed $l\geq 2$, there exists $m_l\in\mathbb{N}$ such that $\overline{O_l}\subset \Omega_m$ for every $m\geq m_l$. Hence, the sequence $\{u_m\}_{m\geq m_l}$ is uniformly bounded in $\overline{O_l}$. Furthermore, applying \cite[Theorem 1.6]{ValdinociJDE25} we obtain that $\|u_m\|_{C^{2,1-2s}(\overline{O_l})}\leq C$, for some constant $C>0$, independent of $m$.
Using Arzela-Ascoli theorem together with Cantor's diagonalization argument we obtain a subsequence $\{u_{m_j}\}_{j\geq 1}$ of $\{u_m\}_{m\geq m_l}$ such that $u_{m_j}\to u$ and $\nabla u_{m_j}\to \nabla u$ uniformly on compact subsets of $\Omega\setminus\Gamma$. Thanks to the Lebesgue dominated convergence theorem, we have 
\begin{align}
        \begin{cases}
            -\Delta u+(-\Delta)^s u=0 \mbox{ in }\Omega\setminus\Gamma,\\
            u=0\mbox{ in }\Omega^c \mbox{ and } u\geq 0\mbox{ in }\Omega\setminus\Gamma.
        \end{cases}
    \end{align}
Thanks to \cite[Theorem 1.3]{ValdinociCPDE} and \cite[Theorem 1.6]{ValdinociJDE25}, one has $u>0$ in $\Omega\setminus\Gamma$ and $u\in C(\overline{\Omega}\setminus\Gamma)$. We claim that $u$ is not symmetric with respect to the hyperplane $x_1=0$. To this concern, we again use \cite[Theorem 1.6]{ValdinociJDE25} and the fact $\varphi_{m_j}(-\frac{1}{m_j},0)=2$ to obtain 
\begin{align}
    |u_{m_j}(x)-2|=|u_{m_j}(x)-u_{m_j}(-\frac{1}{m_j},0)|\leq C|x_1
+\frac{1}{m_j}|,
\end{align}
$\mbox{ for every }x=(x_1,0)\in\Omega \mbox{ with }x_1<-\frac{1}{m_j}.$ Taking $j\to\infty$, we deduce
$|u(x)-2|\leq C|x_1|$ and hence $$\lim_{x_1\to 0^-}u(x_1,0)=2.$$
Since $\varphi_{m_j}(\frac{1}{m_j},0)=1$, by similar way we obtain $$\lim_{x_1\to 0^+}u(x_1,0)=1.$$
Thus, $u$ is not symmetric with respect to the hyperplane $x_1=0$. It is noteworthy that $\mathrm{cap}_2(\Gamma)>0$.
\end{Example}

We now define several notation that will be used to obtain the weak Harnack inequality. For $0<s<1$, the tail space $\mathcal{L}^s(\R^n)$ is defined by 
$$\mathcal{L}^s(\R^n):=\{u\in L^1_{\mathrm{loc}}(\R^n): \int_{\R^n} \frac{|u(y)|}{(1+|y|)^{n+2s}}\;dy<\infty\}.$$
Moreover, the Tail of a function $u$ with respect to the ball $B_r(x_1)$ is denoted by $\mathrm{Tail}(u;x_1,r)$ and defined as 
\begin{align}\label{Tail}
    \mathrm{Tail}(u;x_1,r):=r^2\int_{\R^n\setminus {B_r(x_1)}}\frac{|u(y)|}{|x_1-y|^{n+2s}}\;dy.
\end{align}

\begin{Definition}
    A function $u\in H^1_{\mathrm{loc}}(\Omega)\cap\mathcal{L}^s(\R^n)$ is said to be a supersolution (subsolution) to $$-\Delta u+(-\Delta)^su+c(x)u=g\text{ in }\Omega$$ if the following inequality holds:
$$\int_\Omega \nabla u\cdot\nabla\phi\;dx+\int_{\R^n}\int_{\R^n}\frac{(u(x)-u(y))(\phi(x)-\phi(y))}{|x-y|^{n+2s}}\;dy\;dx+\int_\Omega cu\phi\geq (\text{ or } \leq)\int_\Omega g\phi\;dx,$$
    for every nonnegative $\phi\in C^1_c(\Omega)$.
\end{Definition}
\begin{Remark}
It is noteworthy that the above definitions are well-defined due to the fact $u\in\mathcal{L}^s(\R^n)$.
\end{Remark}
\subsection{Weak Harnack estimate}
This section is devoted to the establishment of a weak Harnack-type inequality, which is crucial for our argument. This result slightly extends \cite[Theorem 1.1]{Garain25}, whose proof relies heavily on the proof of that theorem. To avoid repetitions, we just highlight the main changes here. For the homogeneous case, this inequality has already been obtained in \cite[Proposition 3.3]{BV24}. 
  \begin{prop}\label{WHI}
      Let $\Omega\subset \R^n$ be a bounded domain and $u\in H^1_{\mathrm{loc}}(\Omega)\cap \mathcal{L}^s(\R^n)$ be a weak supersolution to the equation 
      \begin{align}\label{AE1}
          -\De u+(-\De)^su+c(x)u=g \text{ in }\Omega,
      \end{align}
where $0\leq c\in L^\infty(\Omega)$ and $g\in L^q(\Omega)$ with $q>\frac{n}{2}$. Suppose that $u\geq 0$ in $B_R(x_0)\subset\Omega$. Then for every $0<r\leq 1$ with $r<\frac{R}{2}$, there exist two constants $\epsilon=\epsilon(n,s,q,\|c\|_{L^\infty(\Omega)})>0$ and $C=C(n,s,q,\|c\|_{L^\infty(\Omega)})$ such that
\begin{align}\label{WH}
    \left( \fint_{B_{\frac{r}{2}}(x_0)} |u|^\epsilon\;dx\right)^\frac{1}{\epsilon}\leq C\left( \essinf_{B_{\frac{r}{2}}(x_0)} u+\left(\frac{r}{R}\right)^2 \mathrm{Tail}(u^-;x_0,R)+r^{2-n}R^{n(1-\frac{1}{q})}\|g\|_{L^q(\Omega)} \right),
\end{align}
where $\mathrm{Tail}(u;x_0,R)$ is given by (\ref{Tail}).
\end{prop}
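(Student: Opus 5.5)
We follow the De Giorgi--Nash--Moser scheme of \cite[Theorem 1.1]{Garain25} (see also \cite[Proposition 3.3]{BV24}) and only track how the zero order term $c(x)u$ and the source $g$ enter. The first move is to absorb the source into a constant shift. Set
$$k:=r^{2-n}R^{n(1-\frac1q)}\|g\|_{L^q(\Omega)}$$
(when $k=0$ replace it by an arbitrary $\kappa>0$ and let $\kappa\to0$ at the end), and work with $w:=u+k$, which is positive in $B_R(x_0)$. Since $c\ge0$ and $u\ge0$ in $B_R(x_0)$, the term $\int cu\phi$ with $\phi\ge0$ supported in $B_R(x_0)$ is harmless once moved to the right-hand side: it is bounded by $\|c\|_{L^\infty}\int w\phi$. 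Thus $w$ is a supersolution of $-\Delta w+(-\Delta)^s w\ge -\|c\|_{L^\infty} w+g$ locally. This is the only place where $\|c\|_{L^\infty(\Omega)}$ enters, and the restriction $r\le1$ lets us bound $\|c\|_{L^\infty} r^2$ by $\|c\|_{L^\infty}$ uniformly.

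The core of the argument is a logarithmic estimate. Test with $\phi=w^{-1}\eta^2$, where $\eta\in C_c^\infty(B_{3\rho/2}(x_0))$ is a cutoff and $B_{2\rho}(x_0)\subset B_R(x_0)$. The local part yields $\int\eta^2|\nabla\log w|^2$ plus the usual cutoff error. For the nonlocal part we reproduce the computation of \cite{Garain25}: the contribution from outside $B_R$ is controlled by $\mathrm{Tail}(u^-;x_0,R)/k$ after using $w\ge k$. The source term is handled by H\"older:
$$\int |g|w^{-1}\eta^2\le k^{-1}\|g\|_{L^q}|B_{2\rho}|^{1-\frac1q},$$
and the choice of $k$ makes this at most $C\rho^{n-2}$. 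The zero order term contributes $C\|c\|_{L^\infty}\rho^n\le C\rho^{n-2}$. Together these give
$$\int_{B_\rho}|\nabla\log w|^2\le C\rho^{n-2}\Big(1+\frac{(\rho/R)^2\mathrm{Tail}(u^-;x_0,R)}{k}\Big).$$
We now also enlarge $k$ to include $(r/R)^2\mathrm{Tail}(u^-;x_0,R)$. With this, Poincar\'e's inequality shows $\log w\in BMO(B_r)$ with a uniform bound.

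From the BMO bound, John--Nirenberg gives an exponent $\epsilon>0$ with
$$\fint w^{\epsilon}\,\fint w^{-\epsilon}\le C.$$
It remains to bound $\big(\fint w^{-\epsilon}\big)^{-1/\epsilon}$ by $C\,\essinf w$. For this we run Moser iteration (or a De Giorgi argument, as in \cite{Garain25}) for negative powers $w^{-p}$: test with $w^{-p-1}\eta^2$ on shrinking balls, bound the source by $\|g\|_{L^q}k^{-1}$ and the zero order term by $\|c\|_{L^\infty}$, and use the Sobolev embedding of Lemma \ref{emb} with the gain coming from $q>\frac n2$. Combining the two estimates,
$$\Big(\fint_{B_{r/2}}u^\epsilon\Big)^{1/\epsilon}\le\Big(\fint_{B_{r/2}}w^\epsilon\Big)^{1/\epsilon}\le C\essinf_{B_{r/2}}w,$$
and $\essinf w=\essinf u+k$, which gives \eqref{WH}.

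The main obstacle is the nonlocal bookkeeping in the logarithmic and iteration steps. The cross terms with $y\notin B_R$, where $u$ may be negative, must produce exactly the factor $(r/R)^2\mathrm{Tail}(u^-;x_0,R)$, and the scaling of the $g$ term must match $r^{2-n}R^{n(1-1/q)}$. I would follow \cite{Garain25} line by line there and check only that the extra terms fit into the same constants.
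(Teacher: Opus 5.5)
Your proposal is correct and follows essentially the same route as the paper. You shift $u$ by a constant $d$ dominating $(r/R)^2\mathrm{Tail}(u^-;x_0,R)+r^{2-n}R^{n(1-\frac1q)}\|g\|_{L^q(\Omega)}$, then combine the logarithmic estimate (Lemma \ref{log}) and the Moser iteration for negative powers (Lemma \ref{localbound}) through John--Nirenberg as in \cite[Theorem 1.1]{Garain25}. The zero-order term costs only an extra $\|c\|_{L^\infty}$ contribution thanks to $c\ge 0$ and $r\le 1$. The only cosmetic difference is that you move $cu\phi$ to the right-hand side and bound it by $\|c\|_{L^\infty}\int w\phi$, whereas the paper keeps $cv$ on the left and notes that $v=u+d$ is still a supersolution because $c\ge 0$.
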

Before proceeding to the proof of this proposition, we prove several auxiliary results analogous to \cite[Lemma 3.1]{Garain25}, \cite[Lemma 3.2]{Garain25}, and \cite[Lemma 4.1]{Garain25}. Since our arguments follow similar lines, we omit the full proofs and only highlight the effect of the lower-order term.
\begin{Lemma}
    Let $u\in H^1_{\mathrm{loc}}(\Omega)\cap \mathcal{L}^s(\R^n)$ be a supersolution of \eqref{AE1} such that $u\geq 0$ in $B_R(x_0)\subset\Omega$. Suppose that $0\leq \psi\in C^\infty_c(B_r(x_0))$ for some $0<r\leq1$ with $r<R$, and $d>0\;, \delta>0$ are two fixed real numbers. Then, for any $\eta>1$, there exists $C=C(\eta,\|c\|_{L^\infty(\Omega)})>0$ such that 
    \begin{align}\label{Energy}
        \int_{B_r(x_0)}\psi^2|\nabla v^{-\frac{\alpha}{2}}|^2\;dx&+\int_{B_r(x_0)}\int_{B_r(x_0)}\frac{|v^{-\frac{\alpha}{2}}(x)\psi(x)-v^{-\frac{\alpha}{2}}(y)\psi(y)|^2}{|x-y|^{n+2s}}\;dx\;dy\nonumber\\
        &\leq C\alpha^2\Big[\int_{B_r(x_0)}v^{-\alpha}|\nabla\psi|^2\;dx+\|\psi v^\frac{-\alpha}{2}\|_{L^2(B_R(x_0)}^2\nonumber\\
        &+\int_{B_r(x_0)}\int_{B_r(x_0)}\frac{(v^{-\alpha}(x)+v^{-\alpha}(y))(\psi(x)-\psi(y))^2}{|x-y|^{n+2s}}\;dx\;dy\nonumber\\
        &+\Big(\sup_{x\in\mathrm{supp}(\psi)}\int_{\R^n\setminus B_r(x_0)}\frac{dy}{|x-y|^{n+2s}}\nonumber\\
        &+d^{-1}\sup_{x\in\mathrm{supp}(\psi)}\int_{\R^n\setminus B_R(x_0)}\frac{u^{-}(y)}{|x-y|^{n+2s}}\;dy\Big)\int_{B_r(x_0)}v^{-\alpha}\psi^2\;dx\nonumber\\
        &+d^{-1}\|g\|_{L^q(\Omega)}\Big(\delta\|\psi v^\frac{-\alpha}{2}\|_{L^{2^*}(B_R(x_0))}^2+\delta^\frac{-n}{q-n}\|\psi v^\frac{-\alpha}{2}\|_{L^{2}(B_R(x_0))}^2\Big) \Big],
    \end{align}
    where $v=u+d$ and $\alpha=\eta-1>0$.
\end{Lemma}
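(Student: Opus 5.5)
The plan is to test the supersolution inequality with $\phi=v^{-\eta}\psi^2$, where $v=u+d$, and to treat the local part, the nonlocal part, the zero-order term and the source term separately. This follows the structure of \cite[Lemma 3.1]{Garain25}, and the only genuinely new ingredient is the term $c(x)u$. A density argument makes this test function admissible: $u\geq 0$ in $B_R(x_0)$ gives $v\geq d>0$ on $\mathrm{supp}\,\psi\subset B_r(x_0)$, and $u\in H^1_{\mathrm{loc}}$, so $v^{-\eta}\psi^2\in H^1$ with compact support and is nonnegative. Testing gives
\begin{align*}
\int_\Omega \nabla u\cdot\nabla(v^{-\eta}\psi^2)\,dx+\iint\frac{(u(x)-u(y))(\phi(x)-\phi(y))}{|x-y|^{n+2s}}\,dx\,dy+\int_\Omega c\,u\,v^{-\eta}\psi^2\,dx\geq\int_\Omega g\,v^{-\eta}\psi^2\,dx .
\end{align*}

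\emph{Local term.} Since $\nabla u=\nabla v$, it equals $-\eta\int v^{-\eta-1}|\nabla v|^2\psi^2+2\int v^{-\eta}\psi\nabla v\cdot\nabla\psi$. Applying Young's inequality and the identity $|\nabla v^{-\alpha/2}|^2=\frac{\alpha^2}{4}v^{-\alpha-2}|\nabla v|^2$ with $\alpha=\eta-1$, moving it to the other side yields $\int\psi^2|\nabla v^{-\alpha/2}|^2\le C\alpha^2\big(\int v^{-\alpha}|\nabla\psi|^2+\dots\big)$. Here $C=C(\eta)$, exactly as in the classical Moser computation.

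\emph{Nonlocal term.} Split $\mathbb{R}^n\times\mathbb{R}^n$ into $B_r\times B_r$ and the complementary pieces. On $B_r\times B_r$ I will use the standard algebraic inequality from \cite{Garain25}, the Di Castro--Kuusi--Palatucci type lemma for negative powers. It bounds the double integral of $|v^{-\alpha/2}\psi(x)-v^{-\alpha/2}\psi(y)|^2$ by the negative of the cross term plus $\iint(v^{-\alpha}(x)+v^{-\alpha}(y))(\psi(x)-\psi(y))^2$.

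For the off-diagonal part with $y\notin B_r$, I use $u(x)-u(y)\ge -u(y)$ together with $u(x)\ge0$. For $y\in B_R\setminus B_r$ one has $u(y)\ge0$, so $(u(x)-u(y))\phi(x)\le v(x)\cdot v^{-\eta}\psi^2=v^{1-\eta}\psi^2$. For $y\notin B_R$ the bound is $\big(v(x)+u^-(y)\big)v^{-\eta}\psi^2$. Finally, $v^{1-\eta}=v^{-\alpha}$ and $v^{-\eta}\le d^{-1}v^{-\alpha}$. This produces the sup-integral terms $\sup_x\int_{\mathbb{R}^n\setminus B_r}|x-y|^{-n-2s}dy$ and $d^{-1}\sup_x\int_{\mathbb{R}^n\setminus B_R}u^-(y)|x-y|^{-n-2s}dy$, each multiplied by $\int v^{-\alpha}\psi^2$.

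\emph{Lower-order term.} Since $c\ge0$ and $u\le v$, we have $\int c\,u\,v^{-\eta}\psi^2\le\|c\|_\infty\int v^{-\alpha}\psi^2=\|c\|_\infty\|\psi v^{-\alpha/2}\|_{L^2}^2$. This is the extra term $\|\psi v^{-\alpha/2}\|_{L^2(B_R)}^2$ in the statement, and it is where the dependence of $C$ on $\|c\|_{L^\infty}$ enters.

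\emph{Source term.} The source term is the main obstacle. We have $|\int g v^{-\eta}\psi^2|\le d^{-1}\int|g|\,w^2$ with $w=\psi v^{-\alpha/2}$. By H\"older, this is at most $d^{-1}\|g\|_{L^q}\|w\|_{L^{2q'}}^2$. Since $q>n/2$, we have $2<2q'<2^*$, so I interpolate $\|w\|_{2q'}\le\|w\|_2^{\theta}\|w\|_{2^*}^{1-\theta}$. Young's inequality with parameter $\delta$ then gives $\delta\|w\|_{2^*}^2+\delta^{-\kappa}\|w\|_2^2$, and the exponent $\kappa$ should be computed carefully to match the stated $\delta^{-n/(q-n)}$ (I expect it to come out as $n/(2q-n)$, adjusting if needed). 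Collecting all the terms and multiplying through by the $\alpha^2$ factors from the local and nonlocal conversions gives \eqref{Energy}.
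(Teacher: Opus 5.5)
Your proposal is correct and follows the paper's proof. You test with $\psi^2v^{-\eta}$, handle the local, nonlocal (Di Castro--Kuusi--Palatucci inequality on $B_r\times B_r$ plus tail bounds off the diagonal) and source terms as in \cite[Lemma 3.1]{Garain25}, and bound the only new term by $\|c\|_{L^\infty(\Omega)}\int\psi^2v^{-\alpha}\,dx$. Two small remarks: your exponent $n/(2q-n)$ from the H\"older--interpolation--Young step is the right one, since the $\delta^{-n/(q-n)}$ in the statement is a typo (the paper's own later use of $\delta_0^{n/(n-2q)}$ in the proof of Lemma~\ref{localbound} confirms this); and in the off-diagonal step the inequality you actually need and use is the upper bound $u(x)-u(y)\le u(x)+u^-(y)$, not the lower bound $u(x)-u(y)\ge -u(y)$ that you state.
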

\begin{proof}
Since $c\geq 0$, $v$ is also a supersolution of equation \eqref{AE1}. To prove the energy estimate, we incorporate $\phi=-\psi^2 v^{-\eta}$ in the weak formulation of \eqref{AE1} to obtain
\begin{align}\label{I1}
    I+J+L\leq K,
\end{align}
where 
\begin{align*}
    &I=\int_\Omega\nabla v\cdot \nabla(-\psi^2 v^{-\eta})\;dx,\\
    &J=\int_{\R^n}\int_{\R^n}\frac{(v(x)-v(y))(\psi^2 v^{-\eta}(y)-\psi^2 v^{-\eta}(x))}{|x-y|^{n+2s}}\;dy\;dx,\\
    &L=-\int_\Omega c(x)\psi^2 v^{1-\eta}\;dx,\\
    &K=-\int_\Omega g\psi^2 v^{-\eta}\;dx.
\end{align*}
Since $c\in L^\infty(\Omega)$, we get $|L|\leq \|c\|_{L^{\infty}(\Omega)}\int_\Omega \psi^2v^{-\alpha}\;dx=\|c\|_{L^\infty(\Omega)}\|\psi v^\frac{-\alpha}{2}\|_{L^2(\Omega)}^2$. Estimating $I,J$ and $K$ in a similar way as in the proof of \cite[Lemma 3.1 (a)]{Garain25} we obtain
\begin{align*}
    &I\geq \frac{\eta}{2\alpha^2}\int_{B_r(x_0)}|\nabla v^\frac{-\alpha}{2}|^2\psi^2\;dx-\frac{2}{\eta}\int_{B_r(x_0)}|\nabla\psi|^2v^{-\alpha}\;dx,\nonumber\\
    &J\geq C\int_{B_r(x_0)}\int_{B_r(x_0)}\frac{|v^{-\frac{\alpha}{2}}(x)\psi(x)-v^{-\frac{\alpha}{2}}(y)\psi(y)|^2}{|x-y|^{n+2s}}\;dx\;dy\nonumber\\
    &-C\int_{B_r(x_0)}\int_{B_r(x_0)}\frac{(v^{-\alpha}(x)+v^{-\alpha}(y))(\psi(x)-\psi(y))^2}{|x-y|^{n+2s}}\;dx\;dy \nonumber\\
    &-2\Big(\sup_{x\in\mathrm{supp}(\psi)}\int_{\R^n\setminus B_r(x_0)}\frac{dy}{|x-y|^{n+2s}}\nonumber\\
        &+d^{-1}\sup_{x\in\mathrm{supp}(\psi)}\int_{\R^n\setminus B_{R}(x_0)}\frac{u^{-}(y)}{|x-y|^{n+2s}}\;dy\Big)\int_{B_r(x_0)}v^{-\alpha}\psi^2\;dx,\nonumber\\
    &|K|\leq d^{-1}\|g\|_{L^q(\Omega)}\Big(\delta\|\psi v^\frac{-\alpha}{2}\|_{L^{2^*}(B_R(x_0))}^2+\delta^\frac{-n}{q-n}\|\psi v^\frac{-\alpha}{2}\|_{L^{2}(B_R(x_0))}^2\Big),
\end{align*}
where the constant $C$ depends only on $\eta$, which is bounded for $\eta>1$. Combining these estimates, the inequality (\ref{I1}) reveals the desired estimate.
\end{proof}
\begin{Lemma}\label{localbound}
    Assume that $u$ satisfies the assumption of Proposition \ref{WHI} and $u\geq 0$ in $B_R(x_0)\subset\Omega$. Suppose that $0<r\leq 1$ with $r<R$ and define $v=u+d$ with
    $$d>\left(\frac{r}{R}\right)^2 \mathrm{Tail}(u^-;x_0,R)+r^{2-n}R^{n(1-\frac{1}{q})}||g\|_{L^q(\Omega)}.$$ Then for every $0<\gamma\leq \sigma'<\sigma\leq 1$ and for every $t>0$, there exist two constants $C=C(n,s,t,\gamma,\|c\|_{L^\infty(\Omega)})$ and $\theta=\theta(n,s)>0$ such that 
    $$\esssup_{B_{\sigma'r}}v^{-1}\leq \Big(\frac{C}{(\sigma-\sigma')^\theta}\Big)^\frac{1}{t}\Big(\fint_{B_{\sigma r}}v^{-t}\;dx\Big)^\frac{1}{t}.$$
   
\end{Lemma}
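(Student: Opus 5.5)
The plan is a Moser iteration with negative exponents applied to $w=v^{-1}$, driven by the energy estimate \eqref{Energy} with $\eta>1$. We have $v\geq d>0$ in $B_R(x_0)$, so all negative powers of $v$ are bounded there and may be used freely. For the exponent $\alpha=\eta-1>0$, set $w_\alpha:=v^{-\alpha/2}$. Fix radii $\sigma' r\leq\rho'<\rho\leq\sigma r$ and a cutoff $\psi\in C^\infty_c(B_\rho)$ with $\psi\equiv1$ on $B_{\rho'}$ and $|\nabla\psi|\leq C/(\rho-\rho')$.

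First, I would bound the right-hand side of \eqref{Energy} by
\[
C\alpha^2(\rho-\rho')^{-2}\int_{B_\rho}v^{-\alpha}\,dx .
\]
The gradient term gives this directly. The nonlocal term with $(\psi(x)-\psi(y))^2$ does too, using $|\psi(x)-\psi(y)|\leq C|x-y|/(\rho-\rho')$ and $r\leq1$. For $\sup_{x\in\mathrm{supp}\psi}\int_{\R^n\setminus B_r}|x-y|^{-n-2s}\,dy$, note that $\mathrm{dist}(\mathrm{supp}\psi,\partial B_r)\geq r-\rho\geq(\sigma-\sigma')r\cdot$const after adjusting radii, so it is $\lesssim r^{-2s}(\rho-\rho')^{-n-2s}r^{n}\lesssim(\rho-\rho')^{-2}\cdot(\text{powers of }(\sigma-\sigma')^{-1})$. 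For the tail term, the choice of $d$ gives
\[
d^{-1}\sup_x\int_{\R^n\setminus B_R}\frac{u^-(y)}{|x-y|^{n+2s}}\,dy\lesssim d^{-1}R^{-2}\mathrm{Tail}(u^-;x_0,R)\leq r^{-2}.
\]
This uses $|x-y|\geq|y-x_0|/2$ for $r<R/2$, or after a minor adjustment otherwise. The $\|c\|_{L^\infty}$ term is absorbed since $r\leq1$. For the $g$ term, the choice of $d$ gives $d^{-1}\|g\|_{L^q}\leq r^{n-2}R^{-n(1-1/q)}$. I would choose $\delta$ so that the term $\delta\,d^{-1}\|g\|\,\|\psi w_\alpha\|^2_{L^{2^*}}$ is absorbed into the left side via the Sobolev inequality for $\psi w_\alpha$ with constant $\sim1$. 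Concretely, take $\delta\sim(C\alpha^2 d^{-1}\|g\|)^{-1}$. The remaining piece is then $\lesssim\alpha^{\kappa}r^{-2}\|\psi w_\alpha\|_2^2$ for some power $\kappa$ depending on $n,q$. This absorption step is the main obstacle: the relevant exponent is $n/(2q-n)$ rather than as printed, and tracking the scaling in $r$ and $R$ and the polynomial growth in $\alpha$ is where care is needed. Polynomial growth in $\alpha$ is harmless for the iteration.

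Second, I would apply the Sobolev inequality (Lemma \ref{emb}, localized to $\psi w_\alpha\in H^1_0(B_r)$; for $n=2$ use any large exponent) to the left side of \eqref{Energy}. With $\chi=\frac{n}{n-2}>1$ (or any $\chi>1$ if $n=2$), this yields the reverse Hölder inequality
\[
\Big(\fint_{B_{\rho'}}v^{-\alpha\chi}\Big)^{\frac1\chi}\leq\frac{C\alpha^{\kappa}}{(\sigma-\sigma')^{\theta}}\Big(\frac{r}{\rho-\rho'}\Big)^2\fint_{B_\rho}v^{-\alpha}.
\]

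Third, I would iterate with $\alpha_j=t\chi^j$, $\rho_j=\sigma' r+2^{-j}(\sigma-\sigma')r$. The product of the constants $(C\alpha_j^\kappa 4^j)^{1/\alpha_j}$ converges, which gives
\[
\esssup_{B_{\sigma'r}}v^{-1}\leq\Big(\frac{C}{(\sigma-\sigma')^\theta}\Big)^{1/t}\Big(\fint_{B_{\sigma r}}v^{-t}\Big)^{1/t}.
\]
Averages over $B_{\rho_j}$ versus $B_{\sigma r}$ differ by factors of $(\sigma/\sigma')^n\leq\gamma^{-n}$; this is the source of the $\gamma$-dependence of $C$. The $t$-dependence arises because $\alpha_0=t$ enters the constants, with $\eta=1+\alpha>1$ handled uniformly as in \eqref{Energy}.
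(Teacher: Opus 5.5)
Your proposal is correct and follows essentially the same route as the paper: a Moser iteration with negative exponents driven by \eqref{Energy}, in which the $L^{2^*}$ part of the $g$-term is absorbed through the choice of $\delta$, followed by iteration over shrinking balls. (The paper's $\delta=\delta_0 r_i^{2q-n}R^{(n-2q)(1-\frac1q)}$ with $C\alpha^{2\chi}\delta_0^{\chi}=\frac12$ plays the role of your $\delta\sim(C\alpha^2d^{-1}\|g\|_{L^q})^{-1}$, and it does use the exponent $n/(2q-n)$ you identified.) Two bookkeeping points need tidying. First, bound $\sup_x\int_{\R^n\setminus B}|x-y|^{-n-2s}\,dy$ directly by $\mathrm{dist}^{-2s}\le\mathrm{dist}^{-2}$; your intermediate expression $r^{n-2s}(\rho-\rho')^{-n-2s}$ is not dominated by $(\rho-\rho')^{-2}$ uniformly in $r$ when $s>\frac12$. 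Second, for $R/2\le r<R$ the $u^-$-tail term carries an extra factor of order $(r/(\rho-\rho'))^{n+2s}$, which is geometric in $j$ and polynomial in $(\sigma-\sigma')^{-1}$, so the iteration absorbs it exactly as in the paper.
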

\begin{proof}
Let us define $\sigma_i=\sigma-(\sigma-\sigma')(1-\frac{1}{\chi^i})$, where $$\chi=\begin{cases}
    \frac{n}{n-2}&\text{ if }n>2,\\
    2&\text{ if }n=2.
\end{cases}$$
It is clear that $\sigma_i$ decreases to $\sigma'$. We further denote $r_i=\sigma_i r$ and $B_i:=B_{r_i}(x_0)$. For each \textcolor{black}{$i\in \mathbb{N}\cup\{0\}$}, we choose $\psi_i\in C^\infty_c(B_i)$ such that $0\leq \psi_i\leq 1$, $\psi_i=1$ in $B_{i+1}$, and $|\nabla\psi_i|\leq \frac{C\chi^i}{(\sigma-\sigma')r_i}$. Furthermore, $\mathrm{dist}(\partial B_i,\mathrm{supp}(\psi_i))\geq \frac{r_i}{2^{i+1}}$. To prove this result, we adopt Moser's iteration technique. To this aim, we estimate
\begin{align}
    \fint_{B_{i+1}}v^{-\alpha\chi}\;dx&\leq \frac{1}{|B_{i+1}|}\int_{B_{i}}|\psi_i v^\frac{-\alpha}{2}|^{2^*}\;dx\leq \frac{C}{|B_{i+1}|}\left(\int_{B_i}|\nabla(\psi_iv^\frac{-\alpha}{2})|^2\;dx\right)^\chi\nonumber\\
    &\leq \frac{C}{|B_{i+1}|}\Big[ \underbrace{\left(\int_{B_i}|\nabla v^\frac{-\alpha}{2}|^2\psi_i^2\;dx\right)^\chi}_{I}+ \underbrace{\left(\int_{B_i}v^{-\alpha}|\nabla\psi_i|^2\;dx\right)^\chi}_{J}\Big]
\end{align}
where $2^*=2\chi$ and $C=C(n)>0$. Here, $\alpha=\eta-1$ with $\eta>1$. Now we estimate $I$ and $J$ separately.\\
\textbf{Estimate of I:} Since $\eta>1$, using the estimate \eqref{Energy}, we have
\begin{align}\label{Ma0}
    I\leq C\alpha^{2\chi}(I_1^\chi+I_2^\chi+I_3^\chi+I_4^\chi+I_5^\chi),
\end{align}
where 
\begin{align*}
    &I_1=\int_{B_{i}}v^{-\alpha}|\nabla\psi|^2\;dx,\; I_2=\int_{B_{i}}\psi v^{-\alpha}\;dx\\
    &I_3= \int_{B_{i}}\int_{B_{i}}\frac{(v^{-\alpha}(x)+v^{-\alpha}(y))(\psi(x)-\psi(y))^2}{|x-y|^{n+2s}}\;dx\;dy,\\
    &I_4= \Big(\sup_{x\in\mathrm{supp}(\psi)}\int_{\R^n\setminus B_{i}}\frac{dy}{|x-y|^{n+2s}}\nonumber\\
        &+d^{-1}\sup_{x\in\mathrm{supp}(\psi)}\int_{\R^n\setminus B_{i}}\frac{u^{-}(y)}{|x-y|^{n+2s}}\;dy\Big)\int_{B_{i}}v^{-\alpha}\psi^2\;dx\\
        &I_5= d^{-1}\|g\|_{L^q(\Omega)}\Big(\delta\|\psi v^\frac{-\alpha}{2}\|_{L^{2^*}(B_{i})}^2+\delta^\frac{-n}{q-n}\|\psi v^\frac{-\alpha}{2}\|_{L^{2}(B_{i})}^2\Big),
\end{align*}
where $\delta>0$ is a constant (to be determined later). Using the property of $\psi_i$, it is immediate that 
\begin{align}\label{Ma1}
    I_1\leq C\left(\frac{\chi^i}{(\sigma-\sigma')r_i}\right)^2\int_{B_i}v^{-\alpha}\;dx,\quad I_2\leq \int_{B_i}v^{-\alpha}\;dx,
\end{align}
for some constant $C=C(n)>0$. Along the lines of proof of \cite[inequality (4.8)-(4.9), page 16]{Garain25}, we obtain
\begin{align}
    &I_3\leq C\left(\frac{\chi^i}{(\sigma-\sigma')r_i}\right)^2\int_{B_i}v^{-\alpha}\;dx\\
   \text{ and } &I_4\leq C\frac{2^{i(n+2s)}}{(\sigma-\sigma')^2r_i^2}\int_{B_i}v^{-\alpha}\;dx.
\end{align}
We set $\delta=\delta_0r_i^{2q-n}R^{(n-2q)(1-\frac{1}{q})}$, where the constant $\delta_0>0$ to be determined later. Using the fact $d>r^{2-n}R^{n(1-\frac{1}{q})}\|g\|_{L^q(\Omega)}$, we deduce
\begin{align}\label{Ma2}
    I_5&\leq \delta_0\Big(\frac{r_i}{R}\Big)^{2(q-1)}\|\psi_iv^\frac{-\alpha}{2}\|_{L^{2^*}(B_i)}^2+\delta_0^\frac{n}{n-2q}r_i^{-2}\|\psi_iv^\frac{-\alpha}{2}\|_{L^{2}(B_i)}^2,\nonumber\\
    &\leq \delta_0\|\psi_iv^\frac{-\alpha}{2}\|_{L^{2^*}(B_i)}^2+\delta_0^\frac{n}{n-2q}r_i^{-2}\|\psi_iv^\frac{-\alpha}{2}\|_{L^{2}(B_i)}^2.
\end{align}
Combining (\ref{Ma1})-(\ref{Ma2}), and using the facts $2i\leq 2i(n+2s+2),\;i(n+2s)\leq 2i(n+2s+2)$, inequality (\ref{Ma0}) yields
\begin{align}\label{Ma3}
    I&\leq C\alpha^\chi\Big( \big(\frac{\chi^i}{(\sigma-\sigma')r_i}\big)^{2\chi}+1+ \big( \frac{2^{i(n+2s)}}{(\sigma-\sigma')^2r_i^2}\big)^\chi \Big) \Big(\int_{B_i}v^{-\alpha}\;dx\Big)^\chi\nonumber\\
    &+C\alpha^{2\chi}\Big(\delta_0^\chi\|\psi_iv^\frac{-\alpha}{2}\|_{L^{2^*}(B_i)}^{2^*}+\delta_0^\frac{n\chi}{n-2q}r_i^{-2\chi}\|\psi_iv^\frac{-\alpha}{2}\|_{L^{2}(B_i)}^{2^*}\Big)\nonumber\\
    &\leq C\alpha^{2\chi}\left(\frac{(2\chi)^{2i(n+2s+2)}}{(\sigma-\sigma')^2r_i^2}\int_{B_i}v^{-\alpha}\;dx\right)^\chi\nonumber\\
    &+C\alpha^{2\chi}\Big(\delta_0^\chi\|\psi_iv^\frac{-\alpha}{2}\|_{L^{2^*}(B_i)}^{2^*}+\delta_0^\frac{n\chi}{n-2q}r_i^{-2\chi}\|\psi_iv^\frac{-\alpha}{2}\|_{L^{2}(B_i)}^{2^*}\Big),
\end{align}
where $C=C(n,s,\gamma, q,\|c\|_{L^\infty(\Omega)})>0$. We Choose $\delta_0>0$ such that $C\alpha^{2\chi}\delta_0^\chi=\frac{1}{2}.$ Thus, (\ref{Ma3}) reveals
\begin{align}\label{Ma4}
    I\leq C\max\{\alpha^{2\chi},\alpha^{2\chi(1+\frac{n}{2q-n})}\}\left(\frac{(2\chi)^{2i(n+2s+2)}}{(\sigma-\sigma')^2r_i^2}\int_{B_i}v^{-\alpha}\;dx\right)^\chi+\frac{1}{2}\|\psi_iv^\frac{-\alpha}{2}\|_{L^{2^*}(B_i)}^{2^*},
\end{align}
where $C=C(n,s,\gamma, q,\|c\|_{L^\infty(\Omega)})>0$. Using the properties of $\psi_i$, we get
\begin{align}\label{Ma5}
    J\leq C\left(\frac{(2\chi)^{2i(n+2s+2)}}{(\sigma-\sigma')^2r_i^2}\int_{B_i}v^{-\alpha}\;dx\right)^\chi.
\end{align}
Combining \eqref{Ma4}, \eqref{Ma5}, we obtain
\begin{align}
    \int_{B_{i}}|\psi_i v^\frac{-\alpha}{2}|^{2^*}\;dx\leq C\Big(\max\{\alpha^{2\chi},\alpha^{2\chi(1+\frac{n}{2q-n})}\}+1\Big)\left(\frac{(2\chi)^{2i(n+2s+2)}}{(\sigma-\sigma')^2r_i^2}\int_{B_i}v^{-\alpha}\;dx\right)^\chi,
\end{align}
where $C=C(n,s,\gamma, q,\|c\|_{L^\infty(\Omega)})>0$. The rest of the proof proceeds verbatim as in \cite[Lemma 4.1]{Garain25}.
\end{proof}

\begin{Lemma}\label{log}
Assume that $u$ satisfies the assumption of Proposition \ref{WHI} and $u\geq 0$ in $B_R(x_0)\subset\Omega$. Suppose that $0<r\leq 1$ with $r<\frac{R}{2}$ and $\psi\in C^\infty_c(B_\frac{3r}{2}(x_0))$ such that $0\leq \psi\leq 1$, and $|\nabla\psi|\leq \frac{C}{r}$ in $B_\frac{3r}{2}(x_0)$ for some $C=C(n)>0$. Define $v=u+d$ for some $d>0$. Then there exists a constant $C=C(n,s,q,\|c\|_{L^\infty(\Omega)})>0$ such that 
\begin{align}\label{AE2}
    \int_{B_\frac{3r}{2}(x_0)}|\nabla (\mathrm{log}v)|^2\psi^2\;dx\leq Cr^n\Big(\frac{1}{r^2}+\frac{1}{dR^2}\mathrm{Tail}(u^-;x_0,R)\Big)+\frac{CR^{n(1-\frac{1}{q})}}{d}\|g\|_{L^q(B_R(x_0))}.
\end{align}  
Furthermore, if $\psi=1$ in $B_r(x_0)$ and 
\begin{align}\label{lbb}
    d>\left(\frac{r}{R}\right)^2 \mathrm{Tail}(u^-,x_0,R)+r^{2-n}R^{n(1-\frac{1}{q})}||g\|_{L^q(\Omega)},
\end{align}
then 
\begin{align}
    \int_{B_{r}(x_0)}|\nabla \mathrm{log}(v)|^2\;dx\leq Cr^{n-2},
\end{align}
where $C=C(n,s,q,\|c\|_{L^\infty(\Omega)})>0$ is a constant.
\end{Lemma}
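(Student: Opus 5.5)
We test the supersolution inequality for $v=u+d$ with $\phi=\psi^2 v^{-1}$, the logarithmic test function. This is the case $\eta=1$, which the energy estimate \eqref{Energy} deliberately excluded. Since $c\geq 0$ and $d>0$, $v$ is still a supersolution of \eqref{AE1} in $B_R(x_0)$, because $c\,d\geq 0$ only helps. The test function is admissible, since $v\geq d>0$ in $B_R(x_0)$ and $\psi$ is supported in $B_{3r/2}(x_0)\subset B_R(x_0)$. The weak formulation then gives
\[
\int \nabla v\cdot\nabla(\psi^2 v^{-1})\,dx+\iint\frac{(v(x)-v(y))(\psi^2v^{-1}(x)-\psi^2v^{-1}(y))}{|x-y|^{n+2s}}\,dx\,dy+\int c\,\psi^2\,dx\geq \int g\psi^2v^{-1}\,dx .
\]

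\textbf{Local term.} We have $\nabla v\cdot\nabla(\psi^2v^{-1})=-\psi^2|\nabla\log v|^2+2\psi\nabla\psi\cdot\nabla\log v$. Young's inequality turns the local term into at most $-\frac12\int\psi^2|\nabla\log v|^2+2\int|\nabla\psi|^2$, and the last integral is bounded by $Cr^{n-2}$.

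\textbf{Zeroth-order and source terms.} The term with $c$ is at most $\|c\|_{L^\infty}|B_{3r/2}|\leq Cr^n\leq Cr^{n-2}$, using $r\leq 1$. This is the only place the constant picks up a dependence on $\|c\|_{L^\infty(\Omega)}$. For the source term we use $v\geq d$ and Hölder:
\[
\Big|\int g\psi^2v^{-1}\Big|\leq d^{-1}\|g\|_{L^q(B_R)}|B_R|^{1-1/q}\leq Cd^{-1}R^{n(1-1/q)}\|g\|_{L^q(B_R)} .
\]

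\textbf{Nonlocal term.} This step follows the standard logarithmic estimate of Di Castro--Kuusi--Palatucci, as adapted in \cite{Garain25}. We split $\R^n\times\R^n$ into $B_{2r}\times B_{2r}$ and its complement. On $B_{2r}\times B_{2r}$ the integrand is bounded above by $C\frac{(\psi(x)-\psi(y))^2}{|x-y|^{n+2s}}$ (dropping a nonpositive log-difference term). Integrating gives $Cr^{n-2s}\leq Cr^{n-2}$, again since $r\leq1$. Off the diagonal block we use $v(y)\geq 0$ for $y\in B_R$, $v(y)\geq d-u^-(y)$ outside $B_R$, and $\frac{v(x)-v(y)}{v(x)}\leq 1+\frac{u^-(y)}{d}$. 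For $x\in\mathrm{supp}\,\psi$ and $y\notin B_{2r}$ we also have $|x-y|\geq c|x_0-y|$. Hence this part is bounded by
\[
Cr^n\Big(r^{-2s}+d^{-1}\int_{\R^n\setminus B_R}\frac{u^-(y)}{|x_0-y|^{n+2s}}\,dy\Big)= Cr^n\Big(r^{-2s}+\frac{1}{dR^2}\mathrm{Tail}(u^-;x_0,R)\Big),
\]
by the definition \eqref{Tail}.

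\textbf{Combining.} Putting the four bounds together and absorbing $\frac12\int\psi^2|\nabla\log v|^2$ on the left yields \eqref{AE2}. For the second assertion, take $\psi=1$ on $B_r(x_0)$ and $d$ as in \eqref{lbb}. Then
\[
\frac{r^n}{dR^2}\mathrm{Tail}(u^-;x_0,R)\leq r^{n-2}\cdot\frac{(r/R)^2\,\mathrm{Tail}}{d}\leq r^{n-2},
\qquad
\frac{R^{n(1-1/q)}\|g\|_{L^q}}{d}\leq r^{n-2},
\]
so $\int_{B_r}|\nabla\log v|^2\leq Cr^{n-2}$.

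The main obstacle is the nonlocal term. One must check that the sign-sensitive log inequality, $(a-b)(\psi^2(x)/a-\psi^2(y)/b)\leq$ the appropriate bound for $a,b>0$, still holds near the support boundary and in the cross region $B_{2r}\times(B_R\setminus B_{2r})$. I would quote this inequality from \cite{Garain25} rather than rederive it.
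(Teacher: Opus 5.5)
Your proposal is correct and is essentially the paper's argument. It uses the same test function $\psi^2/v$ for the supersolution $v=u+d$, the same bounds $\int c\,\psi^2\,dx\le Cr^n\le Cr^{n-2}$ and $|\int g\psi^2v^{-1}\,dx|\le Cd^{-1}R^{n(1-\frac{1}{q})}\|g\|_{L^q(B_R(x_0))}$, and handles the local and nonlocal terms as in \cite{Garain25}; the only difference is that the paper first keeps the negative term $-\frac{1}{C}\int_{B_{2r}}\int_{B_{2r}}|\log\frac{v(x)}{v(y)}|^2\psi^2$ and then discards it, whereas you drop it at once. The pointwise inequality you flag as the main obstacle is elementary for all positive values of $v$: if $v(x)\ge v(y)$ and $t=\log\frac{v(x)}{v(y)}$, then $(v(x)-v(y))(\frac{\psi^2(x)}{v(x)}-\frac{\psi^2(y)}{v(y)})\le -\frac{1}{2}\psi^2(y)t^2+3(\psi(x)-\psi(y))^2$, so quoting it costs nothing.
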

\begin{proof}
Since $v$ is a supersolution of \eqref{AE1}, incorporating $\phi=\frac{\psi^2}{v}$ in the weak formulation, we deduce
\begin{align}
 \underbrace{\int_\Omega\nabla v\cdot\nabla (\psi^2v^{-1})  \;dx}_{I}&+\underbrace{\int_{\R^n}\int_{\R^n}\frac{(v(x)-v(y))(\psi^2v^{-1}(x)-\psi^2v^{-1}(y))}{|x-y|^{n+2s}}\;dx\;dy}_{J}\nonumber\\
 &+\underbrace{\int_\Omega c(x)\psi^2\;dx}_{L}\geq \underbrace{\int_{\Omega}g\psi^2v^{-1}\;dx}_{K}. 
\end{align}
Since $r<1$, we have $L\leq \|c\|_{L^\infty(\Omega)}|B_{\frac{3r}{2}}|\leq Cr^n\leq Cr^{n-2},$ for some $C=C(n,\|c\|_{L^\infty(\Omega)})>0$. Since $r<\frac{R}{2}$, along the lines of proof of \cite[inequality (3.13)-(3.15), page 13]{Garain25}, we deduce
\begin{align*}
  &I\leq -\frac{1}{2}\int_{B_{\frac{3r}{2}}(x_0)} |\nabla \mathrm{log}(v)|^2\psi^2+Cr^{n-2},\;|K|\leq Cd^{-1}R^{n(1-\frac{1}{q})}\|g\|_{L^q(B_R)},\\
  &J\leq -\frac{1}{C}\int_{B_{2r}(x_0)}\int_{B_{2r}(x_0)}\Big|\mathrm{log}(\frac{v(x)}{v(y)})\Big|^2\psi^2\;dy\;dx+\frac{Cr^n}{dR^2}\mathrm{Tail}(u^-;x_0,R)+Cr^{n-2},
\end{align*}
for some constant $C>0$, depending only on $n$ and $s$. Combining all these estimations, we obtain our desired result.

Since $\psi=1$ in $B_{r}(x_0)$ and (\ref{lbb}) hold, (\ref{AE2}) yields
\begin{align*}
    \int_{B_r(x_0)}|\nabla (\mathrm{log}v)|^2\;dx&\leq Cr^{n-2}+\frac{Cr^{n-2}}{d}\Big(\frac{r^2}{R^2}\mathrm{Tail}(u^-,x_0,R)+r^{2-n}R^{n(1-\frac{1}{q})}\|g\|_{L^q(B_R(x_0))}\Big)\nonumber\\
    &\leq Cr^{n-2},
\end{align*}
for some constant $C>0$.
\end{proof}
\textbf{Proof of Proposition \ref{WHI}:} Using Lemma \ref{localbound}, Lemma \ref{log} together with the John-Nirenberg inequality, along the lines of the proof of Step-1 and Step-2 in \cite[Theorem 1.1]{Garain25}, we obtain our result.\\
The next corollary follows immediately from the proposition \ref{WHI}.
\begin{Corollary}
    Let $u\in H^1_{\mathrm{loc}}(\Omega)\cap \mathcal{L}^s(\R^n)$ be a non-negative weak supersolution to the equation 
    \begin{align*}
          -\De u+(-\De)^su+c(x)u=g \text{ in }\Omega,
      \end{align*}
where $0\leq c\in L^\infty(\Omega)$ and $g\in L^n(\Omega)$. Then, for every $0<r\leq 1$ with $B_{2r}(x_0)\subset \Omega$, there exist two constants $\epsilon=\epsilon(n,s,\|c\|_{L^\infty(\Omega)})>0$ and $C=C(n,s,\|c\|_{L^\infty(\Omega)})$ such that
$$\left( \fint_{B_{r}(x_0)} |u|^\epsilon\;dx\right)^\frac{1}{\epsilon}\leq C\left( \essinf_{B_{r}(x_0)} u+r\|g\|_{L^n(\Omega)} \right).$$
\end{Corollary}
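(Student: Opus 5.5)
The corollary will follow from Proposition \ref{WHI} by a specific choice of radii: apply it with $R=2r$ and $q=n$. Nonnegativity of $u$ on all of $\R^n$ kills the tail term, and then everything reduces to checking the exponents in the $g$-term and comparing balls.

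\textbf{Step 1: choice of parameters.} Since $B_{2r}(x_0)\subset\Omega$, we may take $R=2r$ in Proposition \ref{WHI}. The proposition requires a radius $\rho$ with $\rho<R/2$. Since $r<R/2=r$ fails, I would apply it with $R$ slightly larger than $2r$, or with radius $\rho=2r\cdot\theta$ for some fixed $\theta\in(0,1/2)$. The cleanest choice is to apply it with $R=2r$ and $\rho\in(r/2\cdot 2,\dots)$. Concretely, take $\rho=\tfrac{2r}{2}\cdot(1-\tau)$, so that $\rho<R/2$ and $\rho\le 1$.

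\textbf{Step 2: the terms in \eqref{WH}.} Because $u\ge0$ in $\R^n$, we have $u^-\equiv0$, so $\mathrm{Tail}(u^-;x_0,R)=0$. Take $q=n>\tfrac n2$, which is allowed since $n\ge2$. The $g$-term is then
\[
\rho^{2-n}R^{n(1-\frac1n)}\|g\|_{L^n(\Omega)}=\rho^{2-n}R^{n-1}\|g\|_{L^n(\Omega)}.
\]
Since $\rho\sim R\sim r$ with absolute ratios, this is at most $C(n)\,r\|g\|_{L^n(\Omega)}$. The constants $\epsilon$ and $C$ then depend only on $n,s,\|c\|_{L^\infty(\Omega)}$, because $q=n$ is fixed.

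\textbf{Step 3: matching balls (main obstacle).} The only real issue is that Proposition \ref{WHI} gives the estimate on $B_{\rho/2}$, whereas the corollary asks for it on $B_r$. This is also where the radius constraint $\rho<R/2$ bites. I would first try to invoke the proposition on the ball $B_{R}(x_0)$ with $R=2r$ read literally, allowing $\rho=r$, which is the borderline case. The constraint $r<R/2$ in the proof is only used to keep cutoffs inside $B_R$, via $B_{3\rho/2}\subset B_R$ in Lemma \ref{log}. With $\rho=r$ and $R=2r$ we still have $B_{3r/2}\subset B_{2r}$, so the lemmas and the iteration go through unchanged. The John--Nirenberg/Moser steps of \cite[Theorem 1.1]{Garain25} are run between concentric balls inside $B_{3\rho/2}$, and can be rescaled so that the output ball is $B_\rho=B_r$ instead of $B_{\rho/2}$; this changes only dimensional constants.

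\textbf{Alternative if the rescaling fails.} If that rescaling is not acceptable, I would use a covering argument. Cover $B_r(x_0)$ by finitely many balls $B_{r/8}(z_j)$, with $z_j\in B_r(x_0)$ and the number of balls $N=N(n)$. For each $j$ we have $B_{r/2}(z_j)\subset B_{2r}(x_0)\subset\Omega$, so the proposition applies with $R=r/2$ and $\rho=r/4$. Summing gives
\[
\fint_{B_r}u^\epsilon\le C\sum_j\fint_{B_{r/8}(z_j)}u^\epsilon .
\]
Each small infimum dominates $\essinf_{B_r}$ only in the wrong direction, however. So this covering route needs a chaining argument through overlapping balls to compare the infima, and that chaining is the step I expect to be delicate.
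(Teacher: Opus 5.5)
Your Steps 1--2 are the paper's whole argument: its proof is the single sentence ``put $q=n$ and $u^-=0$ in \eqref{WH}''. With $R$ comparable to $r$, the $g$-term becomes $C(n)\,r\|g\|_{L^n(\Omega)}$. You are also right that this does not literally give the statement, a point the paper passes over. With $R=2r$, Proposition~\ref{WHI} needs $\rho<R/2=r$ and then only controls $B_{\rho/2}(x_0)$, a ball of radius $<r/2$. Neither the $\epsilon$-average nor the infimum transfers from a smaller concentric ball to $B_r(x_0)$.

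The gap in your proposal is that neither of your repairs is completed. The first is not an application of the proposition but an unverified claim about the proof outsourced to \cite{Garain25}. Your account of where $\rho<R/2$ is used is also incomplete. Lemma~\ref{localbound} bounds $\esssup v^{-1}$ only on a ball strictly inside the one where $\fint v^{-t}$ is taken. So to reach $\essinf_{B_r}$, the John--Nirenberg step must run on some $B_{\sigma r}(x_0)$ with $\sigma>1$. The balls $B_{3\sigma r/2}$ and $B_{2\sigma r}$ entering the logarithmic estimate of Lemma~\ref{log} then leave $B_{2r}(x_0)$, the only ball about $x_0$ known to lie in $\Omega$. All auxiliary radii would have to be re-chosen, which is a re-proof, not a rescaling.

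Your covering route is the right repair, and the chaining you call delicate is routine. Take $R=r$ and $\rho=r/4$ instead of $R=r/2$, $\rho=r/4$, which is again borderline. For $z_j\in B_r(x_0)$ one has $B_r(z_j)\subset B_{2r}(x_0)\subset\Omega$ and $\rho<R/2$ strictly. Then \eqref{WH} gives $(\fint_{B_j}u^\epsilon)^{1/\epsilon}\le C(\essinf_{B_j}u+r\|g\|_{L^n(\Omega)})$ on $B_j:=B_{r/8}(z_j)$.

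Choose $\{z_j\}$ as a maximal $\tfrac{r}{16}$-separated subset of $B_r(x_0)$. Then any two $B_j$ are joined by a chain of at most $N(n)$ balls whose consecutive members overlap in measure at least $c(n)|B_j|$. For overlapping $B_j,B_k$,
\[
\essinf_{B_j}u\le\Big(\fint_{B_j\cap B_k}u^\epsilon\Big)^{1/\epsilon}\le c(n)^{-1/\epsilon}\Big(\fint_{B_k}u^\epsilon\Big)^{1/\epsilon}\le C\big(\essinf_{B_k}u+r\|g\|_{L^n(\Omega)}\big).
\]
Since $\essinf_{B_r(x_0)}u=\min_j\essinf_{B_r(x_0)\cap B_j}u\ge\min_j\essinf_{B_j}u$, chaining to a minimizing index bounds every $\essinf_{B_j}u$ by $C(\essinf_{B_r(x_0)}u+r\|g\|_{L^n(\Omega)})$. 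Summing $\int_{B_r(x_0)}u^\epsilon\le\sum_j\int_{B_j}u^\epsilon$ over the $N(n)$ balls then gives the corollary, with constants depending only on $n,s,\|c\|_{L^\infty(\Omega)}$. With this written out, your argument is a complete and more careful version of the paper's one-line proof.
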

\begin{proof}
    Incorporating $q=n$ and $u^-=0$ in \eqref{WH}, we obtain our desired result.
\end{proof}
\subsection{Auxiliary result}
Before we move on to our main auxiliary results, we define two functions $\xi$ and $\eta$ from $\R$ to $\R$ as follows. For $p,\,q\geq 1$,
\begin{align}\label{Babu1}
     \xi(t)=\frac{q^2}{p}\begin{cases}
       0 &\text{ if } t\leq l,\\
       t^p-l^p &\text{ if }l\leq t\leq r,\\
       p r^{p-1}(t-r)+r^p-l^p &\text{ if }t\geq r,
    \end{cases}
\end{align}
and 
\begin{align}\label{Babu2}
     \eta(t)=\begin{cases}
       l^q &\text{ if } t\leq l,\\
       t^q &\text{ if } l\leq t\leq r,\\
       q r^{q-1}(t-r)+r^q &\text{ if }t\geq r.
    \end{cases}
\end{align}
These functions enjoy the following.
\begin{Lemma}\label{Sa2}
    Suppose that $p,\,q\geq 1$ are two real numbers such that $2(q-1)=(p-1)$. Then for all $t$, the following holds:
    \begin{itemize}
        \item[(a)] $\xi'(t)=\eta'(t)^2$,
        \item[(b)] $t\xi(t)\leq q\eta^2(t)$.
    \end{itemize}     
\end{Lemma}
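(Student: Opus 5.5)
The plan is a direct piecewise verification on the three intervals $(-\infty,l)$, $(l,r)$, $(r,\infty)$, with $0<l<r$ implicit in the definitions \eqref{Babu1}--\eqref{Babu2}. Both $\xi$ and $\eta$ are continuous, piecewise $C^1$, and their one-sided derivatives agree at the junctions $t=l$ and $t=r$. At $t=l$, for instance, $\xi'(l^+)=q^2l^{p-1}$ while $\xi'(l^-)=0$. So (a) should be read as holding for all $t\neq l$, or a.e., which is all that is needed when these functions are used in test functions. I would note this and then compute on each piece.

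For (a):
\begin{itemize}
\item On $t<l$: $\xi'=0=(\eta')^2$.
\item On $l<t<r$: $\xi'(t)=q^2t^{p-1}$ and $\eta'(t)^2=q^2t^{2(q-1)}$. These agree exactly because $p-1=2(q-1)$.
\item On $t>r$: $\xi'(t)=\frac{q^2}{p}\,pr^{p-1}=q^2r^{p-1}$ and $\eta'(t)^2=q^2r^{2(q-1)}$, which again agree by the same relation.
\end{itemize}

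For (b), the region $t\le l$ is trivial: $t\xi(t)=0\le ql^{2q}=q\eta^2$.

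On $l\le t\le r$, note that $p=2q-1$, so $t\,\xi(t)=\frac{q^2}{p}(t^{p+1}-tl^p)\le \frac{q^2}{p}t^{2q}$. It then suffices that $q^2/p\le q$, i.e. $q\le 2q-1$, which holds since $q\ge1$.

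On $t\ge r$, write $\tau=t-r\ge0$. Then
\[
\xi(t)\le \tfrac{q^2}{p}\bigl(pr^{p-1}\tau+r^p\bigr),\qquad \eta^2(t)=q^2r^{2q-2}\tau^2+2qr^{2q-1}\tau+r^{2q}.
\]
Multiplying $\xi$ by $t=r+\tau$ gives
\[
t\xi(t)\le \tfrac{q^2}{p}\bigl(pr^{p-1}\tau^2+(p+1)r^p\tau+r^{p+1}\bigr).
\]
Using $p=2q-1$ and $p+1=2q$, compare coefficients of $\tau^2$, $\tau$ and $1$ with $q\eta^2=q^3r^{2q-2}\tau^2+2q^2r^{2q-1}\tau+qr^{2q}$:
\begin{itemize}
\item $\tau^2$: $q^2r^{2q-2}\le q^3r^{2q-2}$, true since $q\ge1$.
\item $\tau$: $\frac{2q^3}{p}r^{2q-1}\le 2q^2r^{2q-1}$, equivalent to $q\le p$.
\item constant: $\frac{q^2}{p}r^{2q}\le qr^{2q}$, again equivalent to $q\le p$.
\end{itemize}
Both reduce to $q\le 2q-1$, which holds.

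The only point needing care is the bookkeeping in this last case, i.e. ensuring each coefficient inequality reduces to $q\le p$ or $q\ge1$; no real obstacle is expected.
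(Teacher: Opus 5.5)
Your proof is correct and is the direct piecewise computation the paper has in mind when it omits the proof as straightforward. In the case $t\ge r$, each coefficient comparison reduces correctly to $q\ge1$ or to $q\le p=2q-1$. Your caveat that (a) holds only for $t\neq l$, because $\xi$ and $\eta$ both have a kink there when $l>0$, is a valid refinement of the paper's ``for all $t$''. It is harmless, since (a) is only used almost everywhere inside integrals.
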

\begin{proof}
    The proof is straightforward. We omit it here.
\end{proof}
The next lemma is an analogue of Alexadroff-Bakelman-Pucci type inequality, which is crucial for our argument.
\begin{Lemma}\label{SML}
     Let $\Omega\subset\R^n$ be a bounded Lipschitz domain. Suppose that $u\in C^1 (\overline{\Omega})\textcolor{black}{\cap \mathcal{L}^s(\R^n)}$ is a weak subsolution of the equation
\begin{align}\label{Sa}
    \begin{cases}
        -\De u+(-\De)^su+c(x)u=g \text{ in }\Omega,\\
        u\leq 0\text{ on }\textcolor{black}{\R^n\setminus\Omega},
    \end{cases}
\end{align}
where $c,\, g\in L^\infty(\Omega).$ Then, there exist two constants $\delta=\delta(n,\|c\|_{L^\infty(\Omega)})>0$ and $\mathcal{K}=\mathcal{K}(n,\|c\|_{L^\infty(\Omega)})>0$
such that whenever $|\Omega|\leq\delta$, we have $$\|u^+\|_{L^\infty(\Omega)}\leq \mathcal{K} \|g\|_{L^\infty(\Omega)}.$$    
\end{Lemma}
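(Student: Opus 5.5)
We prove the estimate by a Moser-type iteration on $u^+$, using the truncated powers $\xi$ and $\eta$ from \eqref{Babu1}--\eqref{Babu2} and Lemma \ref{Sa2}. The smallness of $|\Omega|$ is what lets the zeroth-order term $c(x)u$ and the source $g$ be absorbed. Because $u\in C^1(\overline{\Omega})$ and $u\le 0$ outside $\Omega$, the function $\xi(u^+)$ with $l>0$ (or $l=0$ after a limit) vanishes near $\partial\Omega$ and outside $\Omega$, so it is an admissible nonnegative test function after a standard density argument. Testing the subsolution inequality with $\phi=\xi(u)$, $l=0$, gives
\[
\int_\Omega \xi'(u)|\nabla u|^2+\iint\frac{(u(x)-u(y))(\xi(u(x))-\xi(u(y)))}{|x-y|^{n+2s}}\le \int_\Omega |g|\xi(u)+\|c\|_\infty\int_\Omega |u|\xi(u).
\]
The nonlocal term is nonnegative since $\xi$ is nondecreasing, so we drop it. By Lemma \ref{Sa2}(a) the local term equals $\int|\nabla \eta(u)|^2$. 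By Lemma \ref{Sa2}(b), $u\xi(u)\le q\eta(u)^2$, and the term $\int|g|\xi(u)$ is controlled by splitting into $\{u\le 1\}$ and $\{u>1\}$: on $\{u>1\}$ we use $\xi(u)\le u\xi(u)\le q\eta^2$, and on $\{u\le 1\}$ we use $\xi\le C q^2/p$.

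Next we apply the Sobolev inequality (Lemma \ref{emb}, in its scale-invariant form with constant depending only on $n$) to $\eta(u)-\eta(0)\in H^1_0$. Combined with Hölder's inequality in the form $\|w\|_{L^2}\le |\Omega|^{1/n}\|w\|_{L^{2^*}}$ (for $n=2$, a fixed $L^t$ exponent replaces $2^*$), this yields
\[
\|\eta(u)\|_{L^{2\chi}}^2\le C q\,(\|c\|_\infty+\|g\|_\infty)\,|\Omega|^{2/n}\|\eta(u)\|^2_{L^{2\chi}}+\text{(lower order)}.
\]
We first normalize: replace $u$ by $\tilde u=u/(\|g\|_\infty+\varepsilon)$, which is a subsolution with right-hand side of sup norm at most $1$. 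The goal is then $\|\tilde u^+\|_\infty\le\mathcal K$.

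\textbf{Main step.} In the first step $q=1$, so $\eta(u)\approx u^+$ and $\xi=u^+$. Choosing $\delta$ with $C(\|c\|_\infty+1)\delta^{2/n}\le 1/2$ absorbs the right-hand side and gives $\|\tilde u^+\|_{L^{2\chi}}\le C(\delta)$. For higher $q$, letting $r\to\infty$ in the truncation, we obtain
\[
\|\tilde u^+\|_{L^{2\chi q}}^{2q}\le C q\,\big(\|\tilde u^+\|_{L^{2q}}^{2q}+|\Omega|\big).
\]
Here the Hölder step with exponents $\chi$ now produces the lower norm $L^{2q}$ instead of being absorbed, and $|\Omega|\le\delta$ enters only through fixed constants. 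Iterating with $q_k=\chi^k$ gives
\[
\|\tilde u^+\|_{L^{2\chi^{k+1}}}\le \prod_k (Cq_k)^{1/(2q_k)}\max\{1,\|\tilde u^+\|_{L^{2\chi}}\},
\]
and the product converges because $\sum k\chi^{-k}<\infty$. Letting $k\to\infty$ yields $\|\tilde u^+\|_\infty\le\mathcal K(n,\|c\|_\infty)$, and rescaling gives the claim.

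The main obstacle is making every constant independent of $\Omega$ apart from the requirement $|\Omega|\le\delta$. This needs the scale-free Sobolev inequality on $H^1_0$ (valid for $n\ge3$; for $n=2$ we use the Ladyzhenskaya/Gagliardo–Nirenberg form $\|w\|_{L^4}^2\le C\|w\|_{L^2}\|\nabla w\|_{L^2}$ instead). It also needs careful handling of the additive $|\Omega|$ terms coming from the region where $\tilde u\le 1$ at each step of the iteration.
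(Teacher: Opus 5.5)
Your proof is correct, but it is organized differently from the paper's.

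The paper handles the source by the shift $w=u^++k$ with $k=\|g\|_{L^\infty(\Omega)}$, testing with $\xi(w)$ for $l>k$. Since $g\le k$ and $-cu^+\le\|c\|_{L^\infty(\Omega)}u^+$, the right-hand side is at most $(1+\|c\|_{L^\infty(\Omega)})\int_\Omega w\,\xi(w)\,dx\le q(1+\|c\|_{L^\infty(\Omega)})\int_\Omega\eta(w)^2\,dx$. The energy inequality is therefore homogeneous, and the iteration gives $\|w\|_{L^\infty(\Omega)}\le C|\Omega|^{-1/2}\|w\|_{L^2(\Omega)}$ with no additive terms. Smallness of $|\Omega|$ plays no role in that iteration. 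It enters only in a separate closing step: test with $u^+$, apply Poincar\'e's inequality (constant $C|\Omega|^{1/n}$), and absorb $\|w\|_{L^\infty(\Omega)}$ into the left-hand side. If you carry that step out keeping the factor $|\Omega|$ in $\int_\Omega wu^+\,dx\le|\Omega|\,\|w\|_{L^\infty(\Omega)}^2$, it gains $C|\Omega|^{1/n}$ rather than the power displayed in the paper. Any positive power suffices, so the paper's conclusion still holds.

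You proceed differently. You rescale to $\|g\|_{L^\infty(\Omega)}\le 1$, which is exactly what keeps $\delta$ independent of $g$. You tolerate an additive term $q|\Omega|$ coming from the set $\{u\le 1\}$. You spend the smallness of $|\Omega|$ once, at the base level $q=1$, to bound $\|\tilde u^+\|_{L^{2\chi}(\Omega)}$. You then iterate using only $|\Omega|\le 1$, with the $\max\{1,\cdot\}$ device absorbing the additive terms.

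Your route thus dispenses with the second energy estimate and the Poincar\'e step. The paper's route yields, as a by-product, a homogeneous $L^2$--$L^\infty$ bound with explicit dependence on $|\Omega|$. You also treat $n=2$ explicitly via Ladyzhenskaya's inequality, whereas the paper simply assumes $n>2$.
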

\begin{proof}
Without loss of generality, we assume that $|\Omega|\leq 1$ and $n>2$. We define $w:=u^++k$, where $k=\|g\|_{L^\infty(\Omega)}$. We assume $l>k$ in the aforementioned definition of $\xi$ and $\eta$. Incorporating $\phi=\xi(w)$ in the weak formulation of \eqref{Sa}, we obtain 
\begin{align*}
    \int_\Omega \xi'(w)\nabla u\cdot\nabla w\;dx+&\int_{\R^n}\int_{\R^n}\frac{(u(x)-u(y))(\xi(w)(x)-\xi(w)(y))}{|x-y|^{n+2s}}\;dy\;dx\nonumber\\
    &+\int_{\Omega}c(x)u(x)\xi(w)(x)\;dx\leq\int_{\Omega}g\xi(w)\;dx.
\end{align*}
Using the monotonicity property of $\xi$ and the fact that $ u\geq 0$ in $\mathrm{supp}(\xi(w))$, we obtain
\begin{align*}
    \int_\Omega \xi'(w)|\nabla w|^2\;dx+\int_{\Omega}c(x)u^+\xi(w)\;dx\leq\int_{\Omega}g\xi(w)\;dx.
\end{align*}
Utilizing Lemma \ref{Sa2}, one has
\begin{align}\label{Sa3}
     \int_\Omega |\nabla\eta(w)|^2\;dx\leq\int_{\Omega}(g-cu^+)\xi(w)\;dx&\leq (1+\|c\|_{L^\infty(\Omega)})\int_{\Omega}(k+u^+)\xi(w)\;dx \nonumber\\
     &\leq (1+\|c\|_{L^\infty(\Omega)})\int_\Omega w\xi(w)\;dx\nonumber\\
     &\leq q(1+\|c\|_{L^\infty(\Omega)})\int_\Omega |\eta(w)|^2\;dx.
\end{align}
Denote $v:=\eta(w)$. Since $q>1$, \eqref{Sa3} implies
\begin{align}\label{Sa4}
    \int_\Omega |\nabla v|^2\;dx\leq (1+\|c\|_{L^\infty(\Omega)})q^2\int_\Omega |v|^2\;dx.
\end{align}
Now, Using (\ref{Sa4}), Sobolev inequality and the fact $v>l^q$, we have
\begin{align*}
    \|v\|_{L^{2^*(\Omega)}}\leq \|v-l^q\|_{L^{2^*(\Omega)}}+\|l^q\|_{L^{2^*(\Omega)}}&\leq C\|\nabla v\|_{L^2(\Omega)}+\|l^q\|_{L^{2(\Omega)}}|\Omega|^{-\frac{1}{n}}\nonumber\\
    &\leq |\Omega|^{-\frac{1}{n}}\Big(Cq\|v\|_{L^2(\Omega)}+\|l^q\|_{L^2(\Omega)}\Big)\nonumber\\
    &\leq Cq|\Omega|^{-\frac{1}{n}}\|v\|_{L^2(\Omega)},
\end{align*}
where $C=C(n,\|c\|_{L^\infty(\Omega)})>1$ and $2^*:=\frac{2n}{n-2}$. Since $v\leq w^q+l^q-k^q$, we get
\begin{align*}
    \Big(\int_{l\leq w\leq r}v^{2^*}\;dx\Big)^\frac{1}{2^*}\leq Cq|\Omega|^{-\frac{1}{n}}\left(\|w^q\|_{L^2(\Omega)}+(l^q-k^q)|\Omega|^\frac{1}{2}\right).
\end{align*}
Taking $l\to k$ and $r\to\infty$, we deduce
\begin{align*}
    \|w\|_{L^{2^*q}(\Omega)}^q\leq Cq|\Omega|^{-\frac{1}{n}}\|w\|_{L^{2q}(\Omega)}^q.
\end{align*}
This inequality yields that for $\chi=\frac{n}{n-2}$, one has
\begin{align*}
    \|w\|_{L^{2q\chi}(\Omega)}\leq (Cq)^\frac{1}{q}|\Omega|^{-\frac{1}{nq}}\|w\|_{L^{2q}(\Omega)}.
\end{align*}
 Iterating this process, for every $i\in\mathbb{N}$ we obtain
\begin{align*}
    \|w\|_{L^{2\chi^i}(\Omega)}\leq (C|\Omega|^{-\frac{1}{n}})^{1+\frac{1}{\chi}+..+\frac{1}{\chi^{i-1}}}(\prod_{j=1}^{i-1} \chi^\frac{j}{\chi^j})\|w\|_{L^{2}(\Omega)}\leq C|\Omega|^{-\frac{1}{2}}\|w\|_{L^{2}(\Omega)},
\end{align*}
for some constant $C=C(n,\|c\|_{L^\infty(\Omega)})>0$. Taking $i\to\infty$, we deduce 
\begin{align*}
    \|w\|_{L^\infty(\Omega)}\leq C|\Omega|^{-\frac{1}{2}}\|w\|_{L^{2}(\Omega)}.
\end{align*}
Since $w=u^++k$, the above inequality reveals 
\begin{align}\label{Sa5}
    \|w\|_{L^\infty(\Omega)}\leq C|\Omega|^{-\frac{1}{2}}\|u^+\|_{L^2(\Omega)}+Ck,
\end{align}
for some constant $C=C(n,\|c\|_{L^\infty(\Omega)})>0$.
Moreover, putting $u^+\in H^1_0(\Omega)$ in the weak formulation of \eqref{Sa} and utilizing the Poincar\'{e} inequality, one has
\begin{align}\label{Sa6}
    \|\nabla u^+\|_{L^2(\Omega)}^2\leq \int_\Omega (g-cu)u^+\;dx\leq (1+\|c\|_{L^\infty(\Omega)})\int_\Omega wu^+\;dx\leq C\|w\|_{L^\infty(\Omega)}^2,
\end{align}
Combining \eqref{Sa5}, (\ref{Sa6}) together with the poincar\'{e} inequality, we get
\begin{align}\label{Sa7}
    \|w\|_{L^\infty(\Omega)}\leq C|\Omega|^{\frac{n-1}{2}}\|w\|_{L^\infty(\Omega)}+Ck,
\end{align}
where $C=C(n,\|c\|_{L^\infty(\Omega)})>0$ is a positive constant. Whenever $C|\Omega|^\frac{n-1}{2}\leq \frac{1}{2}$ i.e., $|\Omega|\leq \Big(\frac{1}{2C}\Big)^\frac{2}{n-1}(:=\delta)$, we have $$\|u^+\|_{L^\infty(\Omega)}\leq \|w\|_{L^\infty(\Omega)}\leq 2Ck=2C\|g\|_{L^\infty(\Omega)},$$
for some positive constant $C=C(n,\|c\|_{L^\infty(\Omega)}).$
This completes the proof.
\end{proof}

\section{Preliminaries for Theorem \ref{T1}}
We begin with defining some notation. For $\lambda\in\R$, we define 
\begin{align}\label{M}
    \Sigma_\lambda:=\begin{cases}
        \{x=(x_1,x_2,...,x_n)\in \R^{n}: x_1>\lambda\}, &\text{ if }\lambda>0,\\
        \{x=(x_1,x_2,...,x_n)\in \R^{n}: x_1<\lambda\}, &\text{ if }\lambda<0,
    \end{cases}
\end{align}
and $\Omega_\lambda:=\Sigma_\lambda\cap\Omega.$ Furthermore, we define $x_\lambda=R_\lambda x:=(2\lambda-x_1,x_2,x_3,...,x_n)$, which is the reflection of $x$ through the hyperplane $T_\lambda:=\{x_1=\lambda\}$ and $R_\lambda\Omega_\lambda:=\{x_\lambda: x\in\Omega_\lambda\}$. Define $u_\lambda:\R^n\to \R$ by $u_\lambda(x)=u(x_\lambda)$. Let $a=\sup_{x\in\Omega} x\cdot e_1$.

Since $\Gamma\subset\Omega$ is a closed set such that $\mathrm{cap}_2(\Gamma)=0$, one has $\mathrm{cap}_2(R_\lambda\Gamma)=0$, where $R_\lambda\Gamma$ denotes the reflection of $\Gamma$ with respect to the hyperplane $T_\lambda$ (in general, for any $A\subset \R^n$, $R_\lambda A$ denotes the reflection of $A$ with respect to the hyperplane $T_\lambda$). We also have $\mathcal{H}^{n-2}(T_\lambda\cap \partial\Omega)<\infty$, and hence $\mathrm{cap}_2(T_\lambda\cap\partial\Omega)=0$. Due to these facts, for small $\epsilon_1,\epsilon_2>0$, there exist $\psi_1\in C^\infty_c(B_{\epsilon_1}(R_\lambda\Gamma))$ and $\psi_2\in C^\infty_c(B_{\epsilon_2}(T_\lambda\cap\partial\Omega))$ such that $ \psi_1$ and $\psi_2$ are greater than or equal to $1$ in $B_{\delta_1}(R_\lambda\Gamma)$ and $B_{\delta_2}(T_\lambda\cap\partial\Omega)$, respectively, and
\begin{align}\label{L1}
        \int_{B_{\epsilon_1}(R_\lambda\Gamma)}|\nabla \psi_1|^2\,dx\leq C\epsilon_1,\int_{B_{\epsilon_2}(T_\lambda\cap\partial\Omega)}|\nabla \psi_2|^2\,dx\leq C\epsilon_2,
\end{align}
for some $\delta_i<\epsilon_i$ ($1\leq i\leq 2$) and for some constant $C>0$, independent of $\epsilon_1$ and $\epsilon_2$. Without loss of generality, we assume that $B_{\epsilon_2}(T_\lambda\cap\partial\Omega)$ is symmetric about the plane $T_\lambda$ and $\psi_2(x)=\psi_2(x_\lambda)$ (otherwise, we take the map $x\to\psi_{\epsilon_2}(x)+\psi_{\epsilon_2}(x_\lambda)$).\\
Let us define two maps $\phi_1:\Sigma_\lambda\to \R$ and $\phi_2:\R^n\to \R$ by $$\phi_i:=S\circ\psi_i,$$
where $$S(t)=\begin{cases}
    1,&\text{ if }t\leq 0,\\
    -2t+1,&\text{ if }0\leq t\leq\frac{1}{2},\\
    0,&\text{ if }t\geq \frac{1}{2}.
\end{cases}$$
We extend $\phi_1$ on $\R^n$ by $\phi_1(x)=\phi_1(x_\lambda)$ for $x\in\R^n\setminus\Sigma_\lambda$.
Clearly, $\phi_1,\phi_2\in C^{0,1}(\R^n,[0,1])$; using (\ref{L1}) we have
\begin{align}\label{Ranjit1}
       \int_{B_{\epsilon_1}(R_\lambda\Gamma)}|\nabla \phi_1|^2\,dx\leq 4C\epsilon_1,\text{ and }\int_{B_{\epsilon_2}(T_\lambda\cap\partial\Omega)}|\nabla \phi_2|^2\,dx\leq 4C\epsilon_2.
\end{align}
The following Lemma is very crucial for our argument. 
\begin{Lemma}\label{MLemma}
    For $\lambda\in\R$, define $w_\lambda:\R^n\to\R$ by 
    \begin{align}\label{IF}
        w_\lambda(x):=\begin{cases}
    (u-u_\lambda)^+, &\text{ if }x\in\Sigma_\lambda,\\
    -(u-u_\lambda)^-,&\text{ if }x\in \R^n\setminus\Sigma_\lambda.
\end{cases}
    \end{align}
Then the support of the function $w_\lambda$ is contained in $\overline{\Omega_\lambda\cup R_\lambda\Omega_\lambda}$. Furthermore, $w_\lambda\in H^1_0(\Omega_\lambda\cup R_\lambda\Omega_\lambda)$.
\end{Lemma}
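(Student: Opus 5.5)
The plan is to first settle the support claim, and then prove the $H^1$ regularity by multiplying $w_\lambda$ by the cutoffs $\phi_1,\phi_2$ and letting $\epsilon_1,\epsilon_2\to0$.

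\textbf{Support.} By construction $w_\lambda(x_\lambda)=-w_\lambda(x)$, since $u_\lambda(x_\lambda)=u(x)$. So it suffices to look at $x\in\Sigma_\lambda$.
\begin{itemize}
\item If $x\in\Sigma_\lambda\setminus\Omega$, then $u(x)=0\le u_\lambda(x)$, so $(u-u_\lambda)^+(x)=0$.
\item Hence $w_\lambda$ vanishes on $\Sigma_\lambda\setminus\Omega_\lambda$.
\item By the odd symmetry, $w_\lambda$ vanishes outside $\Omega_\lambda\cup R_\lambda\Omega_\lambda$, apart from the hyperplane $T_\lambda$, which is a null set.
\end{itemize}
We work with $\lambda>0$ (the case $\lambda<0$ is symmetric). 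By $x_1$-convexity and symmetry of $\Omega$, we have $R_\lambda\Omega_\lambda\subset\Omega$. It follows that $u_\lambda>0$ on $\Omega_\lambda$, away from the singular set.

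\textbf{Regularity.} Restricting to $\Sigma_\lambda$, the function $(u-u_\lambda)^+$ is continuous on $\overline{\Omega_\lambda}$ away from three sets:
\begin{itemize}
\item $R_\lambda\Gamma$, where $u_\lambda$ may blow up;
\item $\Gamma$ itself, if $\Gamma$ meets $\Sigma_\lambda$ (for $\lambda>0$, $\Gamma\subset\{x_1=0\}$ does not, so only $R_\lambda\Gamma$ matters);
\item $T_\lambda\cap\partial\Omega$, where continuity up to the boundary may fail.
\end{itemize}
Note that $(u-u_\lambda)^+\le u$, and $u$ is continuous near $R_\lambda\Gamma\subset\Omega_\lambda$ because $u\in C(\overline\Omega\setminus\Gamma)$. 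So $w_\lambda$ is bounded.

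Consider $\varphi=w_\lambda^2\phi_1^2\phi_2^2\chi_{\Sigma_\lambda}$, or more simply $w_\lambda\phi_1^2\phi_2^2$ on $\Omega_\lambda$. This is a legitimate test function, approximated by $C^1_c(\Omega\setminus\Gamma)$ functions, in the weak formulations for $u$ and for $u_\lambda$. The latter satisfies the reflected equation on $R_\lambda(\Omega\setminus\Gamma)$, using \eqref{sp} and the reflection invariance of the fractional kernel.

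Subtract the two weak formulations and use $\nabla w_\lambda\cdot\nabla(w_\lambda\phi_1^2\phi_2^2)\ge \tfrac12|\nabla w_\lambda|^2\phi_1^2\phi_2^2-2w_\lambda^2(|\nabla\phi_1|^2+|\nabla\phi_2|^2)$. The goal is the local estimate
\[
\int_{\Omega_\lambda}|\nabla w_\lambda|^2\phi_1^2\phi_2^2\le C\|u\|_{L^\infty}^2\big(\|\nabla\phi_1\|_2^2+\|\nabla\phi_2\|_2^2\big)+\text{(nonlocal terms)}+\int f\text{-terms}.
\]
The $f$-difference is controlled by $(\mathscr F_1)$: on the support of $w_\lambda$ we have $u>u_\lambda$, so $f(x,u)-f(x_\lambda,u_\lambda)\le \mathscr K_1 w_\lambda$, using \eqref{mp} to compare $f(x,\cdot)$ with $f(x_\lambda,\cdot)$. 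This gives a term bounded by $C\int w_\lambda^2$. By \eqref{Ranjit1}, the cutoff terms are $O(\epsilon_1+\epsilon_2)$.

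\textbf{Main obstacle.} The hard part is the nonlocal term. One must show that the double integral pairing $u-u_\lambda$ against the test function is bounded below by a nonnegative quantity minus cutoff errors. The plan is:
\begin{itemize}
\item Split $\R^n$ into $\Sigma_\lambda$ and its reflection, and rewrite the integral over $\Sigma_\lambda\times\Sigma_\lambda$ with the kernels $|x-y|^{-n-2s}$ and $|x-y_\lambda|^{-n-2s}$. The latter is smaller, which gives positivity, as in \cite{BV21}.
\item Estimate the cross term $\iint (\phi^2(x)-\phi^2(y))\cdots$ by $\epsilon$-small quantities, using that $\phi_i$ are Lipschitz with $\|\phi_i\|_{W^{s,2}}\le C\|\phi_i\|_{H^1}$ (Lemma \ref{Gagliardo} type bound) together with boundedness of $w_\lambda$.
\end{itemize}

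\textbf{Conclusion.} Letting $\epsilon_1,\epsilon_2\to0$ (Fatou), we get $\nabla w_\lambda\in L^2(\Omega_\lambda)$, and the same for the reflected part by symmetry. Since $w_\lambda$ is continuous, vanishes on $\partial(\Omega_\lambda\cup R_\lambda\Omega_\lambda)$ except on a set of zero $2$-capacity, and has finite energy, it follows that $w_\lambda\in H^1_0(\Omega_\lambda\cup R_\lambda\Omega_\lambda)$. The zero-capacity exceptional sets are removable by the same cutoffs $\phi_1,\phi_2$: the functions $w_\lambda\phi_1\phi_2$ are compactly supported approximants converging in $H^1$.
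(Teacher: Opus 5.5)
Your proposal is correct and takes essentially the paper's route: the same support argument, the test function $w_\lambda\phi_1^2\phi_2^2$ built from the capacity cut-offs, the $f$-difference bounded via \eqref{mp} and $(\mathscr{F}_1)$, Young's inequality for the local term, and then Fatou's lemma plus approximation by the cut-off versions of $w_\lambda$. In the nonlocal term, your kernel comparison $|x-y|\le|x-y_\lambda|$ on $\Sigma_\lambda\times\Sigma_\lambda$ is exactly what makes the paper's $J_1$ (the pairing of $u-u_\lambda-w_\lambda$ with $\phi_\lambda$) nonnegative, and your cross-term estimate is the paper's $J_2/J_3$ step, with one caveat: the cross term is not small by itself, and the Young remainder must be absorbed into $\iint (w_\lambda(x)-w_\lambda(y))^2\phi_1^2(x)\phi_2^2(x)|x-y|^{-n-2s}\,dx\,dy$.
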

\begin{proof} For the first part, it is enough to show that $w_\lambda\equiv 0$ in $(\Omega_\lambda\cup R_\lambda\Omega_\lambda)^c$. Suppose $x\in \Omega_\lambda^c\cap\Sigma_\lambda$ then $u(x)=0$ and hence $w_\lambda(x)=0$. If $x\in (R_\lambda\Omega_\lambda)^c\cap (\R^n\setminus\Sigma_\lambda)$, then $x_\lambda\in \Omega_\lambda^c \cap \Sigma_\lambda$ and hence $u(x_\lambda)=0$. Thus, $w_\lambda(x)=0$ and thereby the first part is proved. \\
The rest of the proof aims to show that $w_\lambda\in H^1_0(\Omega_\lambda\cup R_\lambda\Omega_\lambda)$. We divided the proof into two steps, where in the first step, we define an admissible test function that will be used in the second step to complete the lemma.\\
\textbf{Step-I:} In this step, we will show that for every $\lambda\in\R$, the function
\begin{align*}
    \phi_\lambda:=w_\lambda\phi_1^2\phi_2^2 =\begin{cases}
        (u-u_\lambda)^+\phi_1^2\phi_2^2, &\text{ in }\Sigma_\lambda,\\
        -(u-u_\lambda)^-\phi_1^2\phi_2^2, &\text{ in }\R^n\setminus\Sigma_\lambda,\\
    \end{cases}
\end{align*}
is Lipschitz continuous and compactly supported in $(\Omega_\lambda\cup R_\lambda\Omega_\lambda)\setminus (\Gamma\cup R_\lambda\Gamma)$. 
Since $u\in C(\overline{\Omega}\setminus\Gamma)$ and $u=0$ on $\partial\Omega$, for every $x\in\partial(\Omega_\lambda\cup R_\lambda\Omega_\lambda)$ there exists a small ball $B_r(x)$ such that $\phi_\lambda=0$ in $B_r(x)$, where the definition of $\phi_2$ has also been used. Moreover, using the definition of $\phi_1$, there exists a neighborhood $U$ of $\Gamma\cup R_\lambda\Gamma$ such that $\phi_\lambda=0$ in $U$. Therefore, $\mathrm{supp}(\phi_\lambda)\Subset (\Omega_\lambda\cup R_\lambda\Omega_\lambda)\setminus (\Gamma\cup R_\lambda\Gamma)$.

Now, we will show that $\phi_\lambda\chi_{\Sigma_\lambda}\in C^{0,1}_c(\R^n)$. It is enough to show that for every $x\in \R^n$, there exists a ball $B_r(x)$ such that $\phi_\lambda \chi_{\Sigma_\lambda}\in C^{0,1}(B_r(x))$. It is immediate that if $x\in\R^n\setminus\overline{\Omega}_\lambda$, then such a ball can be easily found. If $x\in \partial\Omega\cap \overline{\Sigma}_\lambda$, then utilizing the boundary condition and recalling the definition of $\phi_2$, we can find such a small ball. Suppose $x\in \Omega_\lambda\setminus R_\lambda\Gamma$, then there exists a ball $B_r(x)\subset\Omega_\lambda$ such that $u, u_\lambda\in C^1(B_r(x))$ (see, \cite{ValdinociMathZ22}) and therefore $\phi_\lambda\in C^{0,1}(B_r(x))$. When $x\in T_\lambda\cap\overline{\Omega_\lambda}$, there exists a ball $B_r(x)$ such that $B_r(x)\cap\partial\Omega=\emptyset$, $B_r(x)\cap R_\lambda\Gamma=\emptyset$ and $u,u_\lambda\in C^1(B_r(x))$. Therefore, $u,\,u_\lambda\in C^1(B_r(x)\cap \overline{\Omega}_\lambda)$. Thus, $\phi_\lambda \chi_{\Sigma_\lambda}\in C^{0,1}(B_r(x))$ and thereby $\phi_\lambda\chi_{\Sigma_\lambda}\in C^{0,1}_c(\R^n)$. Arguing in a similar way, we obtain $\phi_\lambda\chi_{\R^n\setminus\Sigma_\lambda}\in C^{0,1}_c(\R^n)$. Consequently, $\phi_\lambda=\phi_\lambda\chi_{\Sigma_\lambda}+\phi_\lambda\chi_{\R^n\setminus\Sigma_\lambda}\in C^{0,1}_c(\R^n)$. Moreover, $$\nabla\phi_\lambda=\phi_1^2\phi_2^2\nabla w_\lambda+2\phi_1\phi_2^2w_\lambda\nabla\phi_1+2\phi_1^2\phi_2w_\lambda\nabla\phi_2\in L^2(\Omega).$$
\textbf{Step-II:} Here, we prove that $w_\lambda\in H^1_0(\Omega_\lambda\cup R_\lambda\Omega_\lambda)$. The step-I allowed us to incorporate $\phi_\lambda$ in the weak formulation of equation \eqref{ME1}. Thus, we have
\begin{align}\label{Laxmi1}
    \int_\Omega \nabla u\cdot\nabla\phi_\lambda\;dx+\int_{\R^n}\int_{\R^n}\frac{(u(x)-u(y))(\phi_\lambda(x)-\phi_\lambda(y))}{|x-y|^{n+2s}}\;dy\;dx=\int_\Omega f(x,u)\phi_\lambda\;dx
\end{align}
and
\begin{align}\label{Laxmi2}
    \int_\Omega \nabla u_\lambda\cdot\nabla\phi_\lambda\;dx+\int_{\R^n}\int_{\R^n}\frac{(u_\lambda(x)-u_\lambda(y))(\phi_\lambda(x)-\phi_\lambda(y))}{|x-y|^{n+2s}}\;dy\;dx=\int_\Omega f(x_\lambda,u_\lambda)\phi_\lambda\;dx.
\end{align}
Subtracting (\ref{Laxmi2}) from (\ref{Laxmi1}), we get
\begin{align}\label{Laxmi3}
    \int_\Omega \nabla (u-u_\lambda)\cdot\nabla\phi_\lambda\;dx&+\underbrace{\int_{\R^n}\int_{\R^n}\frac{((u-u_\lambda)(x)-(u-u_\lambda)(y))(\phi_\lambda(x)-\phi_\lambda(y))}{|x-y|^{n+2s}}\;dy\;dx}_{I=I(\epsilon_1,\epsilon_1,\epsilon_2)}\nonumber\\
    &=\int_\Omega (f(x,u)-f(x_\lambda,u_\lambda))\phi_\lambda\;dx.
\end{align}
Recalling the definition of $\phi_\lambda$, and using (\ref{mp}) and ($\mathscr{F}_1$), we deduce that (\ref{Laxmi3}) yields
\begin{align}\label{Laxmi5}
    &\int_{\Omega_\lambda}|\nabla(u-u_\lambda)^+|^2\phi_1^2\phi_2^2\;dx+\int_{\Omega_\lambda}(u-u_\lambda)^+\nabla(u-u_\lambda)^+\cdot\nabla(\phi_1^2\phi_2^2)\;dx\nonumber\\
    &+\int_{R_\lambda\Omega_\lambda}|\nabla(u-u_\lambda)^-|^2\phi_1^2\phi_2^2\;dx+\int_{R_\lambda\Omega_\lambda}(u-u_\lambda)^-\nabla(u-u_\lambda)^-\cdot\nabla(\phi_1^2\phi_2^2)\;dx+I\nonumber\\
    &= \int_{\Omega_\lambda} (f(x,u)-f(x_\lambda,u_\lambda))(u-u_\lambda)^+\phi_1^2\phi_2^2\;dx+\int_{R_\lambda\Omega_\lambda} (f(x_\lambda,u_\lambda)-f(x,u))(u-u_\lambda)^-\phi_1^2\phi_2^2\;dx\nonumber\\
    &\leq \int_{\Omega_\lambda} (f(x,u)-f(x,u_\lambda))(u-u_\lambda)^+\phi_1^2\phi_2^2\;dx+\int_{R_\lambda\Omega_\lambda} (f(x_\lambda,u_\lambda)-f(x_\lambda,u))(u-u_\lambda)^-\phi_1^2\phi_2^2\;dx\nonumber\\
    &\leq C\int_{\Omega_\lambda} |(u-u_\lambda)^+|^2\phi_1^2\phi_2^2\;dx+C\int_{R_\lambda\Omega_\lambda} |(u-u_\lambda)^-|^2\phi_1^2\phi_2^2\;dx,
\end{align}
where $C$ is independent of $\epsilon_1,\epsilon_2$. In the last inequality, we utilized condition ($\mathscr{F}_1$). The inequality (\ref{Laxmi5}) yields
\begin{align}\label{Laxmi4}
 &\int_{\Omega_\lambda}|\nabla(u-u_\lambda)^+|^2\phi_1^2\phi_2^2\;dx+\int_{R_\lambda\Omega_\lambda}|\nabla(u-u_\lambda)^-|^2\phi_1^2\phi_2^2\;dx+I\nonumber\\
  &\leq C\int_{\Omega_\lambda} |(u-u_\lambda)^+|^2\phi_1^2\phi_2^2\;dx+C\int_{R_\lambda\Omega_\lambda} |(u-u_\lambda)^-|^2\phi_1^2\phi_2^2\;dx\nonumber\\
  &-\int_{\Omega_\lambda}(u-u_\lambda)^+\nabla(u-u_\lambda)^+\cdot\nabla(\phi_1^2\phi_2^2)\;dx-\int_{R_\lambda\Omega_\lambda}(u-u_\lambda)^-\nabla(u-u_\lambda)^-\cdot\nabla(\phi_1^2\phi_2^2)\;dx  \nonumber\\
  &\leq C\int_{\Omega_\lambda} |(u-u_\lambda)^+|^2\;dx+C\int_{R_\lambda\Omega_\lambda} |(u-u_\lambda)^-|^2\;dx+2\int_{\Omega_\lambda}(u-u_\lambda)^+|\nabla(u-u_\lambda)^+||\nabla\phi_1|\phi_1\phi_2^2\;dx\nonumber\\
  &+2\int_{\Omega_\lambda}(u-u_\lambda)^+|\nabla(u-u_\lambda)^+||\nabla\phi_2|\phi_2\phi_1^2\;dx+2\int_{R_\lambda\Omega_\lambda}(u-u_\lambda)^-|\nabla(u-u_\lambda)^-||\nabla\phi_1|\phi_1\phi_2^2\;dx  \nonumber\\
  &+2\int_{R_\lambda\Omega_\lambda}(u-u_\lambda)^-|\nabla(u-u_\lambda)^-||\nabla\phi_2|\phi_2\phi_1^2\;dx\nonumber\\
 &\leq C\int_{\Omega_\lambda} |(u-u_\lambda)^+|^2\;dx+C\int_{R_\lambda\Omega_\lambda} |(u-u_\lambda)^-|^2\;dx+\frac{1}{4}\int_{\Omega_\lambda}|\nabla(u-u_\lambda)^+|^2\phi_1^2\phi_2^2\;dx\nonumber\\
  &+16\int_{\Omega_\lambda}|(u-u_\lambda)^+|^2|\nabla\phi_1|^2\phi_2^2\;dx+\frac{1}{4}\int_{\Omega_\lambda}|\nabla(u-u_\lambda)^+|^2\phi_2^2\phi_1^2\;dx+16\int_{\Omega_\lambda}|(u-u_\lambda)^+|^2|\nabla\phi_2|^2\phi_2^2\;dx\nonumber\\
  &+\frac{1}{4}\int_{R_\lambda\Omega_\lambda}|\nabla(u-u_\lambda)^-|^2\phi_2^2\phi_1^2\;dx+16\int_{R_\lambda\Omega_\lambda}|(u-u_\lambda)^-|^2|\nabla\phi_1|^2\phi_2^2\;dx\nonumber\\ 
  &+\frac{1}{4}\int_{R_\lambda\Omega_\lambda}|\nabla(u-u_\lambda)^-|^2\phi_2^2\phi_1^2\;dx+16\int_{R_\lambda\Omega_\lambda}|(u-u_\lambda)^-|^2|\nabla\phi_2|^2\phi_1^2\;dx.
\end{align}
The last inequality was obtained by using Young's inequality. Using $(u-u_\lambda)^+\leq u$ in $\Omega_\lambda$, $(u-u_\lambda)^-\leq u_\lambda$ in $R_\lambda\Omega_\lambda$ and $u\in L^\infty(\Omega_\lambda)$, the Inequality \eqref{Laxmi4} leads to conclude
\begin{align}\label{ImpI}
    &\int_{\Omega_\lambda}|\nabla(u-u_\lambda)^+|^2\phi_1^2\phi_2^2\;dx+\int_{R_\lambda\Omega_\lambda}|\nabla(u-u_\lambda)^-|^2\phi_1^2\phi_2^2\;dx+2I\nonumber\\
    &\leq C\int_{\Omega_\lambda} |(u-u_\lambda)^+|^2\;dx+C\int_{R_\lambda\Omega_\lambda} |(u-u_\lambda)^-|^2\;dx+32\big(\int_{\Omega_\lambda}|(u-u_\lambda)^+|^2|\nabla\phi_1|^2\phi_2^2\;dx\nonumber\\
    &+\int_{\Omega_\lambda}|(u-u_\lambda)^+|^2|\nabla\phi_2|^2\phi_1^2\;dx+\int_{R_\lambda\Omega_\lambda}|(u-u_\lambda)^-|^2|\nabla\phi_1|^2\phi_2^2\;dx\nonumber\\
    &+\int_{R_\lambda\Omega_\lambda}|(u-u_\lambda)^-|^2|\nabla\phi_2|^2\phi_1^2\;dx\big),
\end{align}
    which yields
    \begin{align}\label{Ranjit2}
    &\int_{\Omega_\lambda}|\nabla(u-u_\lambda)^+|^2\phi_1^2\phi_2^2\;dx+\int_{R_\lambda\Omega_\lambda}|\nabla(u-u_\lambda)^-|^2\phi_1^2\phi_2^2\;dx+2I\nonumber\\
    &\leq C\Big(|\Omega_\lambda|+\int_{B_{\epsilon_1}(R_\lambda\Gamma)}|\nabla\phi_1|^2\;dx+\int_{B_{\epsilon_2}(T_\lambda\cap\partial\Omega)}|\nabla\phi_2|^2\;dx\Big)\nonumber\\
    &\leq C(|\Omega|+\epsilon_1+\epsilon_2),
\end{align}
where the constant $C=C(f,\|u\|_{L^\infty(\Omega_\lambda)})$ is independent of $\epsilon_1$ and $\epsilon_2$. The last inequality is obtained by using \eqref{Ranjit1}.
Now, we claim that
\begin{align}\label{Claim1}
   I=\int_{\R^n}\int_{\R^n}\frac{((u-u_\lambda)(x)-(u-u_\lambda)(y))(\phi_\lambda(x)-\phi_\lambda(y))}{|x-y|^{n+2s}}\;dy\;dx\geq o(\epsilon_1,\epsilon_2).
\end{align}
To this concern, we split the integral as follows
\begin{align*}
   \int_{\R^n}\int_{\R^n}&\frac{((u-u_\lambda)(x)-(u-u_\lambda)(y))(\phi_\lambda(x)-\phi_\lambda(y))}{|x-y|^{n+2s}}\;dy\;dx\nonumber\\
   &=\int_{\R^n}\int_{\R^n}\frac{((u-u_\lambda-w_\lambda)(x)-(u-u_\lambda-w_\lambda)(y))(\phi_\lambda(x)-\phi_\lambda(y))}{|x-y|^{n+2s}}\;dy\;dx\nonumber\\
   &+\int_{\R^n}\int_{\R^n}\frac{(w_\lambda(x)-w_\lambda(y))(\phi_\lambda(x)-\phi_\lambda(y))}{|x-y|^{n+2s}}\;dy\;dx=J_1+J_2.
\end{align*}
Since $\phi_1,\,\phi_2$ are symmetric through the plane $T_\lambda$, using similar argument to \cite[Inequality (3.11), page 949]{Luigi18}, we obtain
\begin{align*}
    J_1=\int_{\R^n}\int_{\R^n}\frac{((u-u_\lambda-w_\lambda)(x)-(u-u_\lambda-w_\lambda)(y))(\phi_\lambda(x)-\phi_\lambda(y))}{|x-y|^{n+2s}}\;dy\;dx\geq 0.
\end{align*} 
In order to prove \eqref{Claim1}, it is enough to show that $J_2\geq o(\epsilon_1,\epsilon_2).$
To this aim, we have
\begin{align}\label{Ra1}
    &\int_{\R^n}\int_{\R^n}\frac{(w_\lambda(x)-w_\lambda(y))^2\phi_1^2(x)\phi_2^2(x)}{|x-y|^{n+2s}}\;dy\;dx=\int_{\R^n}\int_{\R^n}\frac{(w_\lambda(x)-w_\lambda(y))(\phi_\lambda(x)-\phi_\lambda(y))}{|x-y|^{n+2s}}\;dy\;dx\nonumber\\
    &+\underbrace{\int_{\R^n}\int_{\R^n}\frac{(w_\lambda(x)-w_\lambda(y))(\phi_1^2(y)\phi_2^2(y)-\phi_1^2(x)\phi_2^2(x))w_\lambda(y)}{|x-y|^{n+2s}}\;dy\;dx}_{J_3}=J_2+J_3.
\end{align}
Utilizing Young's inequality, we derive
\begin{align*}
    J_3&\leq \alpha \int_{\R^n}\int_{\R^n}\frac{(w_\lambda(x)-w_\lambda(y))^2(\phi_1(y)\phi_2(y)+\phi_1(x)\phi_2(x))^2}{|x-y|^{n+2s}}\;dy\;dx\nonumber\\
    &+C(\alpha)\int_{\R^n}\int_{\R^n}\frac{w_\lambda(y)^2(\phi_1(y)\phi_2(y)-\phi_1(x)\phi_2(x))^2}{|x-y|^{n+2s}}\;dy\;dx\nonumber\\
    &\leq 2\alpha \int_{\R^n}\int_{\R^n}\frac{(w_\lambda(x)-w_\lambda(y))^2(\phi_1^2(y)\phi_2^2(y)+\phi_1^2(x)\phi_2^2(x))}{|x-y|^{n+2s}}\;dy\;dx\nonumber\\
    &+C(\alpha)\|u\|^2_{L^\infty(\Omega_\lambda)}\int_{\R^n}\int_{\R^n}\frac{(\phi_1(y)\phi_2(y)-\phi_1(x)\phi_2(x))^2}{|x-y|^{n+2s}}\;dy\;dx\nonumber\\
    &\leq 2\alpha \int_{\R^n}\int_{\R^n}\frac{(w_\lambda(x)-w_\lambda(y))^2(\phi_1^2(y)\phi_2^2(y)+\phi_1^2(x)\phi_2^2(x))}{|x-y|^{n+2s}}\;dy\;dx\nonumber\\
    &+C(\alpha)\|u\|^2_{L^\infty(\Omega_\lambda)}\int_{\R^n} |\nabla(\phi_1\phi_2)|^2\;dx\nonumber\\
    &\leq 2\alpha \int_{\R^n}\int_{\R^n}\frac{(w_\lambda(x)-w_\lambda(y))^2(\phi_1^2(y)\phi_2^2(y)+\phi_1^2(x)\phi_2^2(x))}{|x-y|^{n+2s}}\;dy\;dx\nonumber\\
    &+C(\alpha)\|u\|^2_{L^\infty(\Omega_\lambda)}\int_{\R^n} (|\nabla \phi_1|^2+|\nabla\phi_2|^2)\;dx\nonumber\\
    &\leq 4\alpha \int_{\R^n}\int_{\R^n}\frac{(w_\lambda(x)-w_\lambda(y))^2\phi_1^2(x)\phi_2^2(x)}{|x-y|^{n+2s}}\;dy\;dx\nonumber\\
    &+C(\alpha)\|u\|^2_{L^\infty(\Omega_\lambda)}\big(\int_{B_{\epsilon_1(R_\lambda\Gamma)}} |\nabla\phi_1|^2\;dx+\int_{B_{\epsilon_2(T_\lambda\cap\partial\Omega)}} |\nabla\phi_2|^2\;dx\big)\nonumber\\
    &\leq 4\alpha \int_{\R^n}\int_{\R^n}\frac{(w_\lambda(x)-w_\lambda(y))^2\phi_1^2(x)\phi_2^2(x)}{|x-y|^{n+2s}}\;dy\;dx\nonumber\\
    &+C(\alpha)\|u\|^2_{L^\infty(\Omega_\lambda)}(\epsilon_1+\epsilon_2).
\end{align*}
By choosing $\alpha=\frac{1}{8}$ in the above inequality and using this in \eqref{Ra1}, we obtain $$0\leq\int_{\R^n}\int_{\R^n}\frac{(w_\lambda(x)-w_\lambda(y))^2\phi_1^2(x)\phi_2^2(x)}{|x-y|^{n+2s}}\;dy\;dx\leq 2J_2+C(\alpha)\|u\|^2_{L^\infty(\Omega_\lambda)}(\epsilon_1+\epsilon_2),$$
which reveals $J_2\geq o(\epsilon_1,\epsilon_2)$. This proves our claim (\ref{Claim1}).
Utilize (\ref{Claim1}) and (\ref{Ranjit2}) to obtain
\begin{align*}
    &\int_{\Omega_\lambda}|\nabla(u-u_\lambda)^+|^2\phi_1^2\phi_2^2\;dx+\int_{R_\lambda\Omega_\lambda}|\nabla(u-u_\lambda)^-|^2\phi_1^2\phi_2^2\;dx\leq C(1+\epsilon_1+\epsilon_2),
\end{align*}
where $C>0$ is a constant independent of $\epsilon_1,\,\epsilon_2$. Taking $\epsilon_1,\,\epsilon_2\to 0$ and using Fatou's lemma, we get
$$\int_{\Omega_\lambda\cup R_\lambda\Omega_\lambda}|\nabla w_\lambda|^2\;dx\leq C.$$ Finally, using Lebesgue's dominated convergence theorem, we deduce that $\phi_\lambda\to w_\lambda$  and $\nabla\phi_\lambda\to \nabla w_\lambda$ in $L^2(\Omega_\lambda\cup R_\lambda\Omega_\lambda)$. Thus, the completeness of the space $H^1_0(\Omega_\lambda\cup R_\lambda\Omega_\lambda)$ ensures $w_\lambda\in H^1_0(\Omega_\lambda\cup R_\lambda\Omega_\lambda)$.
\end{proof}
\begin{Remark}\label{R1}
    Suppose that $U\Subset\Omega_\lambda\setminus R_\lambda\Gamma$ such that $u<u_\lambda$ in $U$. Then, we have $$w_\lambda\in H^1_0((\Omega_\lambda\cup R_\lambda\Omega_\lambda)\setminus (U\cup R_\lambda U)).$$
\end{Remark}
\section{Proof of Theorem \ref{T1}}
Throughout this section, we use the aforementioned notation.\\
\textbf{Proof of Theorem \ref{T1}:} We define $$\Lambda:=\{\lambda>0: u\leq u_\mu \text{ in }\Omega_\mu\setminus R_\mu\Gamma \text{ for all }\mu\in (\lambda,a)\}.$$ The proof of this theorem is divided into three steps.\\
\textbf{Step-I:} Firstly, we will prove that $\Lambda\neq\emptyset$. In this concern, we choose $\lambda>0$ such that $\Omega_\lambda\cap R_\lambda\Gamma=\emptyset$. Arguing as Step-I of the proof of Lemma \ref{MLemma}, we infer that the function $\phi_\lambda=w_\lambda\phi_1^2$ can be incorporated as a test function in the weak formulation of equation (\ref{ME1}). Along the lines of the proof of inequality (\ref{ImpI}), we deduce
\begin{align}\label{Satya1}
    &\int_{\Omega_\lambda}|\nabla(u-u_\lambda)^+|^2\phi_1^2\;dx+\int_{R_\lambda\Omega_\lambda}|\nabla(u-u_\lambda)^-|^2\phi_1^2\;dx+2I\nonumber\\
    &\leq C\Big(\int_{\Omega_\lambda\cup R_\lambda\Omega_\lambda} |w_\lambda|^2\;dx+\int_{\Omega_\lambda}|(u-u_\lambda)^+|^2|\nabla\phi_1|^2\;dx+\int_{R_\lambda\Omega_\lambda}|(u-u_\lambda)^-|^2|\nabla\phi_1|^2\;dx\Big)\nonumber\\
    &\leq C\Big(\int_{\Omega_\lambda\cup R_\lambda\Omega_\lambda} |w_\lambda|^2\;dx+\int_{B_{\epsilon_1}(T_\lambda\cap\partial\Omega)}|\nabla\phi_1|^2\;dx\Big)
\end{align}
where the constant $C$ depends on $f,\,\|u\|_{L^\infty(\Omega_\lambda)}$ and 
$$I=I(\epsilon_1)=\int_{\R^n}\int_{\R^n}\frac{((u-u_\lambda)(x)-(u-u_\lambda)(y))(\phi_\lambda(x)-\phi_\lambda(y))}{|x-y|^{n+2s}}\;dy\;dx.$$
Along the lines of proof of the fact \eqref{Claim1}, we have $I(\epsilon_1)\geq o(\epsilon_1)$. This, together with Fatou's lemma, leads to conclude that 
\begin{align*}
    \int_{\Omega_\lambda\cup R_\lambda\Omega_\lambda}|\nabla w_\lambda|^2\,dx\leq C\int_{\Omega_\lambda\cup R_\lambda\Omega_\lambda}|w_\lambda|^2\;dx.
\end{align*}
Since $w_\lambda\in H^1_0(\Omega_\lambda\cup R_\lambda\Omega_\lambda)$, Utilize the Poincar\'{e} inequality to obtain
\begin{align}\label{PI}
    \int_{\Omega_\lambda\cup R_\lambda\Omega_\lambda}|\nabla w_\lambda|^2\,dx\leq C|\Omega_\lambda\cup R_\lambda\Omega_\lambda|^\frac{2}{n}\int_{\Omega_\lambda\cup R_\lambda\Omega_\lambda}|\nabla w_\lambda|^2\;dx,
\end{align}
where $C=C(n,f,\|u\|_{L^\infty(\Omega)})>0$. Choose $\lambda>0$ such that $C|\Omega_\lambda\cup R_\lambda\Omega_\lambda|^\frac{2}{n}\leq \frac{1}{2}$. Thus, (\ref{PI}) yields 
\begin{align*}
     \int_{\Omega_\lambda\cup R_\lambda\Omega_\lambda}|\nabla w_\lambda|^2\,dx\leq 0,
\end{align*}
which leads us to conclude that
$w_\mu=0$ in $\Omega_\mu\setminus R_\mu\Gamma$ for all $\mu\in (\lambda,a)$. Consequently, $\lambda\in \Lambda$, and hence $\Lambda$ is nonempty.\\
\textbf{Step-II:} This step aims to prove $\lambda^*:=\inf \Lambda=0$. To this end, we suppose that $\lambda^*>0$ and come up with a contradiction. Using the continuity of the map $(\lambda,x)\to u(x_\lambda)$, one has $$u\leq u_{\lambda^*}\text{ in }\Omega_{\lambda^*}\setminus R_{\lambda^*}\Gamma.$$
We claim that $u<u_{\lambda^*}\text{ in }\Omega_{\lambda^*}\setminus R_{\lambda^*}\Gamma.$ If not, then there exists $x_0\in \Omega_{\lambda^*}\setminus R_{\lambda^*}\Gamma$ such that $u(x_0)=u_{\lambda^*}(x_0)$. Moreover, there exists a ball $B_r(x_0)\Subset\Omega_{\lambda^*}\setminus R_{\lambda^*}\Gamma$ and $\eta>0$ such that $u,u_{\lambda^*}\geq \eta>0$ in $B_r(x_0)$ and $$\inf_{B_r(x_0)} (u_{\lambda^*}-u)=0.$$ 
Now we prove that $\Tilde{w}=u_{\lambda^*}-u$ is a supersolution of a linear equation in $B_r(x_0)$. To this aim, we suppose $v\in C_c^\infty(B_r(x_0))$ is a nonnegative function. Incorporating $v$ and $v_{\lambda^*}$ in the weak formulation of (\ref{ME1}), we deduce
\begin{align}\label{LaxmiB1}
    \int_\Omega \nabla u\cdot\nabla v\;dx+\int_{\R^n}\int_{\R^n}\frac{(u(x)-u(y))(v(x)-v(y))}{|x-y|^{n+2s}}\;dy\;dx=\int_\Omega f(x,u)v\;dx
\end{align}
and
\begin{align}\label{LaxmiB2}
    \int_\Omega \nabla u_{\lambda^*}\cdot\nabla v\;dx+\int_{\R^n}\int_{\R^n}\frac{(u_{\lambda^*}(x)-u_{\lambda^*}(y))(v(x)-v(y))}{|x-y|^{n+2s}}\;dy\;dx=\int_\Omega f(x_{\lambda^*},u_{\lambda^*})v\;dx.
\end{align}
Subtracting (\ref{LaxmiB1}) from (\ref{LaxmiB2}), we get
\begin{align}\label{LaxmiB3}
    \int_\Omega \nabla (u_{\lambda^*}-u)\cdot\nabla v\;dx&+\underbrace{\int_{\R^n}\int_{\R^n}\frac{((u_{\lambda^*}-u)(x)-(u_{\lambda^*}-u)(y))(v(x)-v(y))}{|x-y|^{n+2s}}\;dy\;dx}_{I=I(\epsilon_1,\epsilon_1,\epsilon_2)}\nonumber\\
    &=\int_\Omega (f(x_{\lambda^*},u_{\lambda^*})-f(x,u))v\;dx\nonumber\\
    &\geq \int_{B_r(x_0)} (f(x,u_{\lambda^*})-f(x,u))v\;dx.
\end{align}
Use condition ($\mathscr{F}_2$) to obtain
\begin{align*}
    \int_\Omega \nabla (u_{\lambda^*}-u)\cdot\nabla v\;dx&+\int_{\R^n}\int_{\R^n}\frac{((u_{\lambda^*}-u)(x)-(u_{\lambda^*}-u)(y))(v(x)-v(y))}{|x-y|^{n+2s}}\;dy\;dx\nonumber\\
    &\geq -M(r,\eta) \int_{B_r(x_0)} (u_{\lambda^*}-u)v\;dx,
\end{align*}
which implies that $\Tilde{w}=u_{\lambda^*}-u$ is a supersolution of the equation
$$-\De z+(-\De)^sz+Mz=0 \text{ in }B_r(x_0).$$
Thus, \cite[Proposition 3.3]{BV24} ensures $u=u_{\lambda^*}$ in $B_r(x_0)$. Since $\Omega_{\lambda^*}\setminus R_{\lambda^*}\Gamma$ is connected, by the usual covering argument, one has $u=u_{\lambda^*}$ in $\Omega_{\lambda^*}\setminus R_{\lambda^*}\Gamma$, which is a contradiction. Hence, $$u<u_{\lambda^*}\text{ in }\Omega_{\lambda^*}\setminus R_{\lambda^*}\Gamma.$$
Since $|R_{\lambda^*}\Gamma|=0$, we can choose $K\Subset \Omega_{\lambda^*}\setminus R_{\lambda^*}\Gamma$ such that 
$|\Omega_{\lambda^*}\setminus K|\leq \frac{1}{2}(\frac{1}{2C})^\frac{n}{2},$
where $C>0$ is given by (\ref{vai}). Since $u_{\lambda^*}-u>0$ in $\Omega_{\lambda^*}\setminus R_{\lambda^*}\Gamma$, there exists $\eta_1>0$ such that $$u_{\lambda^*}-u\geq 2\eta_1>0\text{ in } K.$$
By continuity, there exists $\lambda^{**}\in (0,\lambda^*)$ such that the following hold:
\begin{enumerate}
    \item[(i)] $|\Omega_{\lambda^{**}}\setminus\Omega_{\lambda^*}|\leq \frac{1}{2}(\frac{1}{2C})^\frac{n}{2}$ with $C>0$ is given in (\ref{vai}),
    \item[(ii)] for every $\mu\in (\lambda^{**},\lambda^*)$ we have $$u_\mu-u>\eta_1>0\text{ in }K.$$
\end{enumerate}
 Note that 
\begin{align}\label{vai2}
    |\Omega_{\lambda^{**}}\setminus K|^\frac{2}{n}\leq (|\Omega_{\lambda^{**}}\setminus\Omega_{\lambda^*}| +|\Omega_{\lambda^*}\setminus K|)^\frac{2}{n}\leq \frac{1}{2C}, 
\end{align}
where $C$ is given in \eqref{vai}. Incorporating $\phi_{\lambda^{**}}=w_{\lambda^{**}}\phi_1^2\phi_2^2$ in the weak formulation of (\ref{ME1}) and along the lines of proof of (\ref{ImpI}) we obtain
\begin{align}
    &\int_{\Omega_{\lambda^{**}}}|\nabla(u-u_{\lambda^{**}})^+|^2\phi_1^2\phi_2^2\;dx+\int_{R_{\lambda^{**}}\Omega_{\lambda^{**}}}|\nabla(u-u_{\lambda^{**}})^-|^2\phi_1^2\phi_2^2\;dx+2I\nonumber\\
    &\leq C\int_{\Omega_{\lambda^{**}\cup R_{\lambda^{**}}\Omega_{\lambda^{**}}}} |w_{\lambda^{**}}|^2\;dx+32\big(\int_{\Omega_{\lambda^{**}}}|(u-u_{\lambda^{**}})^+|^2|\nabla\phi_1|^2\phi_2^2\;dx\nonumber\\
    &+\int_{\Omega_{\lambda^{**}}}|(u-u_{\lambda^{**}})^+|^2|\nabla\phi_2|^2\phi_1^2\;dx+\int_{R_{\lambda^{**}}\Omega_{\lambda^{**}}}|(u-u_{\lambda^{**}})^-|^2|\nabla\phi_1|^2\phi_2^2\;dx\nonumber\\
    &+\int_{R_{\lambda^{**}}\Omega_{\lambda^{**}}}|(u-u_{\lambda^{**}})^-|^2|\nabla\phi_2|^2\phi_1^2\;dx\big)\nonumber\\
    &\leq C\Big( \int_{\Omega_{\lambda^{**}\cup R_{\lambda^{**}}\Omega_{\lambda^{**}}}} |w_{\lambda^{**}}|^2\;dx+\epsilon_1+\epsilon_2\Big),
\end{align}
where the constant $C$ depends on $f\,\mbox{ and }\|u\|_{L^\infty(\Omega_\lambda)}$, and 
$$I=I(\epsilon_1,\epsilon_2)=\int_{\R^n}\int_{\R^n}\frac{((u-u_{\lambda^{**}})(x)-(u-u_{\lambda^{**}})(y))(\phi_{\lambda^{**}}(x)-\phi_{\lambda^{**}}(y))}{|x-y|^{n+2s}}\;dy\;dx.$$
Along the lines of proof of the fact \eqref{Claim1}, we have $I(\epsilon_1,\epsilon_2)\geq o(\epsilon_1,\epsilon_2)$. This, together with Fatou's lemma, leads us to conclude that 
\begin{align}\label{vai}
    \int_{(\Omega_{\lambda^{**}}\cup R_{\lambda^{**}}\Omega_{\lambda^{**}})\setminus (K\cup R_{\lambda^{**}} K)}|\nabla w_{\lambda^{**}}|^2\,dx\leq C\int_{(\Omega_{\lambda^{**}}\cup R_{\lambda^{**}}\Omega_{\lambda^{**}})\setminus (K\cup R_{\lambda^{**}} K)}|w_{\lambda^{**}}|^2\;dx.
\end{align}
Due to Remark \ref{R1}, Poincar\'{e} inequality leads to conclude that
\begin{align*}
     \int_{(\Omega_{\lambda^{**}}\cup R_{\lambda^{**}}\Omega_{\lambda^{**}})\setminus (K\cup R_{\lambda^{**}} K)}|\nabla w_{\lambda^{**}}|^2\,dx\leq C |\Omega_{\lambda^{**}}\setminus K|^\frac{2}{n}\int_{(\Omega_{\lambda^{**}}\cup R_{\lambda^{**}}\Omega_{\lambda^{**}})\setminus (K\cup R_{\lambda^{**}} K)}|\nabla w_{\lambda^{**}}|^2\;dx,
\end{align*}
for some constant $C=C(n,f,\|u\|_{L^\infty(\Omega)})>0$.
Using \eqref{vai2}, the above inequality implies 
$$\int_{(\Omega_{\lambda^{**}}\cup R_{\lambda^{**}}\Omega_{\lambda^{**}})\setminus (K\cup R_{\lambda^{**}} K)}|\nabla w_{\lambda^{**}}|^2\,dx\leq 0.$$
Therefore, $w_\mu=0$ in $\Omega_\mu\setminus K$ and hence, $u\leq u_\mu$ in $\Omega_\mu\setminus R_\mu\Gamma$ for every $\mu\in(\lambda^{**},\lambda^*)$. This contradicts the definition of $\lambda^*$. Hence, $\lambda^*=0$. \\
\textbf{Step-III:} In this final step, we show $u(x_1,x_2,...,x_n)=u(-x_1,x_2,...,x_n)$ for every $(x_1,x_2,...,x_n)\in\Omega$ with $x_1>0$. In this regard, since $\lambda^*=0$, we have $u(x_1,x_2,...,x_n)\leq u(-x_1,x_2,...,x_n)$ for all $x=(x_1,x_2,...,x_n)\in \Omega$ with $x_1\geq0$. Moving the plane from left to right, a similar analysis leads us to conclude that $u(x_1,x_2,...,x_n)\geq u(-x_1,x_2,...,x_n)$ for all $x=(x_1,x_2,...,x_n)\in \Omega$ with $x_1\geq0$. Combining these two facts, we get our desired result. 

\textbf{Proof of Corollary \ref{Cor1}:} Since $\Omega$ is a ball in $\R^n$, it is convex and symmetric about every hyperplane that passes through the origin. Furthermore, $f$ satisfies all the assumptions of Theorem \ref{T1} with $\Gamma=\phi$ in every direction. Applying Theorem \ref{T1}, we deduce the result.

\textbf{Proof of Corollary \ref{Cor2}:} Since $0\leq u(x)\leq 1$ for every $x\in\Omega$, the R.H.S. of equation \eqref{ME2} satisfies the hypotheses of Theorem \ref{T1}. Consequently, the result follows from Theorem \ref{T1}.
\section{Proof of Theorem \ref{T2}}
 Before we proceed further, we mention a useful lemma. Recall that $\Sigma_\lambda$ is given in \eqref{M} and $\Omega_\lambda=\Omega\cap \Sigma_\lambda$.
\begin{Lemma}\label{PLD}
    Suppose $\Omega\subset\R^n$ is the unit ball. Let $0<\lambda\leq \lambda_1<1$ and $0<\eta<\frac{1-\lambda_1}{6}$. Assume that $v\in H^1(\Omega_\lambda)\cap \mathcal{L}^s(\R^n)$ is a non-negative weak supersolution to the following equation
    \begin{align*}
        -\De v+(-\De)^sv+cv=g \text{ in }\Omega_\lambda,
    \end{align*}
where $c\in L^\infty(\Omega)$ and $g\in L^n(\Omega)$. Then there exist constants $\mathcal{A}>0,\;\alpha>0,$ and $\epsilon>0$, depending only on $n,\;s,\;\|c\|_{L^\infty(\Omega)}$ and $\lambda_1$ such that
 \begin{align*}
     \sup_{y\in \Omega_{\lambda,\eta}}\Big(\fint_{B_\frac{\eta}{2}(y)}v^\epsilon\;dx\Big)^\frac{1}{\epsilon}\leq \mathcal{A}\eta^{-\alpha}\Big[\essinf_{\Omega_{\lambda,\eta}}v+\|g\|_{L^n(\Omega)}\Big],
 \end{align*}
 where
 \begin{align}\label{nbd}
     \Omega_{\lambda,\eta}:=\{x\in\Omega_\la: \mathrm{dist}(x,\partial\Omega_\lambda)>\eta\}.
 \end{align}
\end{Lemma}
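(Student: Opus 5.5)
The plan is to derive the lemma from the Corollary following Proposition \ref{WHI}, the weak Harnack inequality with $q=n$ and $u^-=0$, and to chain it along a path of balls joining any two points of $\Omega_{\lambda,\eta}$. The number of balls in the chain must be bounded by a power of $\eta^{-1}$.

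\textbf{Step 1: the basic estimate.} Fix $y\in\Omega_{\lambda,\eta}$ and set $r=\eta/2$, so that $r\le 1$. Then $B_{2r}(y)=B_\eta(y)\subset\Omega_\lambda$. The sign of $c$ needs a small adjustment: the Corollary requires $c\ge0$. We write $c=c^+-c^-$ and note that $-\Delta v+(-\Delta)^sv+c^+v\ge g+c^-v$. The term $c^-v$ is nonnegative because $v\ge0$, so $v$ is a supersolution with coefficient $c^+\ge0$ and right-hand side $g$. The Corollary then gives
\[
\Big(\fint_{B_{\eta/2}(y)}v^\epsilon\Big)^{1/\epsilon}\le C\Big(\essinf_{B_{\eta/2}(y)}v+\eta\|g\|_{L^n(\Omega)}\Big).
\]
Since $\eta\le1$, the last term is at most $C\|g\|_{L^n}$.

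\textbf{Step 2: comparing infima of neighbouring balls.} Take $y,z\in\Omega_{\lambda,\eta}$ with $|y-z|\le\eta/4$. Then $B_{\eta/4}(z)\subset B_{\eta/2}(y)$, and
\[
\essinf_{B_{\eta/4}(z)}v\ \ge\ \essinf_{B_{\eta/2}(y)}v\ \ge\ \ldots
\]
To run the chain the other way, we need a lower bound on the infimum over a ball from the mean over an overlapping ball. The mean satisfies
\[
\Big(\fint_{B_{\eta/4}(z)}v^\epsilon\Big)^{1/\epsilon}\ge \essinf_{B_{\eta/4}(z)}v .
\]
Since $|B_{\eta/4}(z)|/|B_{\eta/2}(y)|=2^{-n}$, we also have
\[
\fint_{B_{\eta/4}(z)}v^\epsilon\le 2^{n}\fint_{B_{\eta/2}(y)}v^\epsilon .
\]
Applying Step 1 with radius $\eta/4$ (admissible because $B_{\eta/2}(z)\subset\Omega_\lambda$) yields
\[
\essinf_{B_{\eta/8}(z)}v\ \ge\ C^{-1}\Big(\fint_{B_{\eta/4}(z)}v^\epsilon\Big)^{1/\epsilon}-\|g\| .
\]
Combining these gives a comparison of the form $m(z)\le K\,(m(y)+\|g\|)$ between suitable local infima. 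We use whichever direction of this comparison is needed to pass from the ball where $\essinf_{\Omega_{\lambda,\eta}}v$ is nearly attained to an arbitrary ball $B_{\eta/2}(y)$. Concretely, we propagate the smallness of $v$ in the following sense: if $\essinf_{B(x_*)}v$ is small, then the $L^\epsilon$-mean on $B(x_*)$ is small, hence the mean on an overlapping neighbouring ball is at most $2^{n/\epsilon}$ times that, and Step 1 applied to the neighbour bounds its infimum by this mean. Iterating gives
\[
\Big(\fint_{B_{\eta/2}(y)}v^\epsilon\Big)^{1/\epsilon}\le K^N\Big(\essinf_{\Omega_{\lambda,\eta}}v+\|g\|\Big),
\]
where $N$ is the number of balls in the chain and $K=K(n,s,\|c\|_\infty)$.

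\textbf{Step 3: the chain length.} This is the geometric step and the main obstacle. The set $\Omega_{\lambda,\eta}=\{x\in B_1: x_1>\lambda,\ \operatorname{dist}(x,\partial\Omega_\lambda)>\eta\}$ is, up to the boundary regularity of the corner, a convex set. It is an intersection of a shrunken ball with a half-space, and it is nonempty and uniformly fat because $\eta<(1-\lambda_1)/6$ and $\lambda\le\lambda_1$. Any two of its points are joined by a segment inside it, of length at most $2$, so $N\lesssim\eta^{-1}$ balls of radius $\eta/8$ suffice. The difficulty is that $K^{N}$ with $N\sim\eta^{-1}$ is exponential in $\eta^{-1}$, not polynomial.

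To obtain $\eta^{-\alpha}$, I would run a Whitney-type chain instead of using balls of uniform size. Points deep inside $\Omega_{\lambda,\eta}$ admit balls of radius comparable to their distance to $\partial\Omega_\lambda$, and $\Omega_\lambda$ contains a fixed core ball of radius $\sim(1-\lambda_1)$. Moving from $y$ toward the core, the radii grow geometrically, since we have a cone/John condition for the convex set $\Omega_\lambda$ with constants depending on $\lambda_1$. The number of balls is then $\lesssim\log(1/\eta)$. This gives $K^{C\log(1/\eta)}=\eta^{-\alpha}$ with $\alpha=C\log K$. Along the chain, the $\|g\|$ terms pick up factors $r\le1$, which are harmless.

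To finish, we take the supremum over $y$, which yields the claimed bound with $\mathcal A,\alpha,\epsilon$ depending only on $n,s,\|c\|_\infty,\lambda_1$. The John-domain chain-counting with $\lambda_1$-dependent constants is where I expect the real work to lie.
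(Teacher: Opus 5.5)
Your proposal is correct and is essentially the argument the paper intends when it defers to \cite[Corollary 2.3]{Cozzi24}: apply the weak Harnack Corollary with coefficient $c^+$ after moving $c^-v\ge 0$ to the right-hand side, then run a Harnack chain in the caps $\Omega_\lambda$ of length $O(\log(1/\eta))$, which turns $K^N$ into $\mathcal{A}\eta^{-\alpha}$; the chain length is logarithmic because $\Omega_\lambda$ is convex with inradius at least $(1-\lambda_1)/2$, so $\mathrm{dist}(\cdot,\partial\Omega_\lambda)$ grows linearly toward the centre. The one point to tidy is the propagation step: for balls of different radii it should read $\essinf_{B_{i+1}}v\le \big(|B_i|/|B_i\cap B_{i+1}|\big)^{1/\epsilon}\big(\fint_{B_i}v^\epsilon\,dx\big)^{1/\epsilon}\le K\big(\essinf_{B_i}v+\|g\|_{L^n(\Omega)}\big)$ for consecutive balls of comparable size with large overlap, rather than relying on the neighbouring ball being contained in the current one.
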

\begin{proof}
The proof proceeds verbatim as in \cite[Corollary 2.3]{Cozzi24}. We omit it here.
\end{proof}
\textbf{Proof of Theorem \ref{T2}:} Suppose that $\Omega$ is the unit ball in $\R^n$ and $u\in H^1_0(\Omega)\cap C^1(\overline{\Omega})$ is a weak solution of the equation (\ref{ME3}) with $f(x,u)=k(x)g(u)$. Due to \cite[Theorem 2.7]{Valdinoci23}, there exists $\beta\in(0,1)$ such that $u\in C^{1,\beta}(\overline{\Omega})$. Furthermore, there exists $C>0$, depending only on $n,s,C_0,\|k\|_{L^\infty(\Omega)}$ and $\|g\|_{L^\infty([0,C_0])}$, such that 
\begin{align}\label{Bdd}
    \|Du\|_{L^\infty(\Omega)}+[Du]_{C^{0,\beta}(\Omega)}\leq C.
\end{align}
It is enough to prove that for every unit vector $\nu\in\R^n$ the following hold
$$|u(x)-u(x_\nu)|\leq \mathscr{C}\mathrm{def}(k)^\gamma \text{ for all }x\in \Omega \text{ with } x\cdot \nu>0,$$
for some constants $\mathscr{C}>0$ and $\gamma\in (0,1)$, depending only on $n,s,C_0,\|k\|_{L^\infty(\Omega)}$ and $ \|g\|_{C^{0,1}[0,C_0]}$. Here $x_\nu=x-2(x\cdot\nu)\nu$. We may assume $\mathrm{def}(k)$ is arbitrarily small; otherwise, the above claim is trivially satisfied. Without loss of generality, we assume that $\nu=(1,0,...,0)\in\R^n$ and aim to show
$$|u(x_1,x_2,...,x_n)-u(-x_1,x_2,...,x_n)|\leq \mathscr{C}\mathrm{def}(k)^\gamma, \text{ for all }x\in \Omega \text{ with } x_1>0.$$
In this regard, we define $$\Lambda:=\{ \lambda>0: \|(u-u_\mu)^+\|_{L^\infty(\Omega_\mu)}\leq \mathcal{C}\mathrm{def}(k),\text{ for all }\mu\in (\lambda,1)\},$$
where $\mathcal{C}=\max\{\mathcal{K},\Tilde{C}\}$. Here, the constants $\mathcal{K}$ and $\Tilde{C}$ are obtained in \textcolor{black}{\eqref{Sanjit}} and (\ref{Sanjit0}), respectively.\\
\textbf{Step-I:} Here, we show that the set $\Lambda$ is nonempty. To this end, we have 
\begin{align*}
    -\De u_\mu+(-\De)^su_\mu=k_\mu(x)g(u_\mu) \text{ in }\textcolor{black}{\Omega_\mu\cup R_\mu\Omega_\mu},
\end{align*}
where $k_\mu(x)=k(x_\mu)$. Thus, $w_\mu=u-u_\mu$ satisfies the following equation in the weak sense:
\begin{align}\label{MAAA}
     -\De w_\mu+(-\De )^sw_\mu+c_\mu(x)w_\mu=(k-k_\mu)g(u_\mu) \text{ in }\textcolor{black}{\Omega_\mu\cup R_\mu\Omega_\mu},
\end{align}
where 
\begin{align}\label{RS}
    c_\mu(x):=\begin{cases}
    -k(x)\frac{g(u)-g(u_\mu)}{u-u_\mu} &\text{ if } u\neq u_\mu,\\
    0&\text{otherwise.}  
\end{cases}
\end{align}
Since $k$ is bounded and $g$ is Lipschitz continuous in $[0,C_0]$, we have $c_\mu(x)\in L^\infty(\Omega)$. \textcolor{black}{It is immediate that the following function belongs to the space $H^1_0(\Omega_\mu\cup R_\mu\Omega_\mu)$: $$z_\mu=\begin{cases}
    (u-u_\mu)^+\mbox{ in }\Sigma_\mu,\\
    -(u-u_\mu)^-\mbox{ in }\R^n\setminus\Sigma_\mu.
\end{cases}$$
Note that $\|z_\mu\|_{L^\infty(\Omega_\mu\cup R_\mu\Omega_\mu)}=\|z_\mu\|_{L^\infty(\Omega_\mu)}=\|(u-u_\mu)^+\|_{L^\infty(\Omega_\mu)}$. We denote $\Tilde{z}_\mu:=z_\mu+d$ with $d:=\|(k-k_\mu)^+(x)g(u_\mu)\|_{L^\infty(\Omega_\mu)}$. Incorporating $\phi=\xi(\Tilde{z}_\mu)$ (see \ref{Babu1} for the definition of $\xi$) in the weak formulation of (\ref{MAAA}) and using the monotonicity of $\xi$, we deduce
\begin{align}
    \int_{\Omega_\mu}|\nabla \Tilde{z}_\mu|^2\xi'(\Tilde{z}_\mu)\;dx+\int_{R_\mu\Omega_\mu}|\nabla \Tilde{z}_\mu|^2\xi'(\Tilde{z}_\mu)\;dx \leq (1+\|c_\mu\|_{L^\infty(\Omega)})\int_{\Omega_\mu\cup R_\mu\Omega_\mu}\Tilde{z}_\mu\xi(\Tilde{z}_\mu)\;dx, 
\end{align}
which yields that
\begin{align}
   \int_{\Omega_\mu\cup R_\mu\Omega_\mu}|\nabla \Tilde{z}_\mu|^2\xi'(\Tilde{z}_\mu)\;dx \leq (1+\|c_\mu\|_{L^\infty(\Omega)})\int_{\Omega_\mu\cup R_\mu\Omega_\mu}\Tilde{z}_\mu\xi(\Tilde{z}_\mu)\;dx,  
\end{align}
Along the lines of proof of Lemma \ref{SML}, there exist constants $\delta>0$ and $\mathcal{K}>0$, depending on $n,s,\|k\|_{L^\infty(\Omega)},\|g\|_{C^{0,1}[0,C_0]}$ such that whenever 
\begin{align}\label{MAA1}
    |\Omega_\mu\cup R_\mu\Omega_\mu|\leq \delta
\end{align}
one has
$$\|z_\mu\|_{L^\infty(\Omega_\mu\cup R_\mu\Omega_\mu)}\leq \mathcal{K}\|(k-k_\mu)^+g(u_\mu)\|_{L^\infty(\Omega_\mu)},$$
which yields
\begin{align}
      \|(u-u_\mu)^+\|_{L^\infty(\Omega_\mu)}\leq \mathcal{K}\|(k-k_\mu)^+g(u_\mu)\|_{L^\infty(\Omega_\mu)}.
\end{align}}
Thus, there exist constants $\lambda_1\in (0,1)$ and $\mathcal{K}>0$, depending on $n,s,\|k\|_{L^\infty(\Omega)},\|g\|_{C^{0,1}[0,C_0]}$ such that for every $\lambda_1<\mu<1$ we deduce
\begin{align}\label{Sanjit}
      \|(u-u_\mu)^+\|_{L^\infty(\Omega_\mu)}\leq \mathcal{K}\|(k-k_\mu)^+g(u_\mu)\|_{L^\infty(\Omega_\mu)}.
\end{align}
Since $g\geq 0$, we get
\begin{align*}
    (k-k_\mu)^+(x)g(u_\mu)&\leq \|g\|_{C^{0,1}[0,C_0]} \big(k(x)-k(x_\mu^*)+k(x_\mu^*)-k(x_\mu)\big)\\
    &\leq \|g\|_{C^{0,1}[0,C_0]}\big(\|\nabla^Tk\|_{L^\infty(\Omega)}+\|\frac{\partial^+k}{\partial r}\|_{L^\infty(\Omega)}\big),
\end{align*}
where $x_\mu^*:=\frac{|x_\mu|}{|x|}x$. Therefore, (\ref{Sanjit}) yields that for every $\lambda_1<\mu<1$,
\begin{align}\label{Sanjit0}
    \|(u-u_\mu)^+\|_{L^\infty(\Omega_\mu)}\leq \Tilde{C}\mathrm{def}(k),
\end{align}
for some $\Tilde{C}=\Tilde{C}(n,\|k\|_{L^\infty(\Omega)},\|g\|_{C^{0,1}([0,C_0])})$. Consequently, $[\lambda_1,1)\subset\Lambda$, and hence $\Lambda\neq \emptyset$.\\
\textbf{Step-II:} In this step, we prove that $$\lambda_*:=\inf \Lambda\leq \frac{1}{4}.$$ We proceed by contraction. If possible, let $\frac{1}{4}<\lambda_*\leq \lambda_1<1$.
Due to the continuity of the solution and the definition of $\lambda_*$, we have $u-u_{\lambda_*}\leq \mathcal{C}\mathrm{def}(k)$ in $\Omega_{\lambda_*}$. Therefore, $v_{\lambda_*}:=u_{\lambda_*}-u+\mathcal{C}\mathrm{def}(k)$ is non-negative in $\Omega_{\lambda_*}$, and satisfies the following equation in the weak sense:
\begin{align*}
    -\De v+(-\De)^sv+c_{\la_*}^+v\geq-(k_{\lambda_*}-k)^+g(u_{\lambda_*})-\mathcal{C}c_{\lambda_*}^-(x)\mathrm{def}(k)\text{ in }\Omega_{\lambda_*},
\end{align*}
where $c_{\lambda_*}$ is defined in \eqref{RS} with $\mu=\lambda_*$. Since $p_0=(1-2\eta,0,0,...,0)\in \Omega_{\lambda_*,\eta}$, due to Lemma \ref{PLD}, for $0<\eta<\frac{1-\lambda_1}{6}$ we have
\begin{align}\label{Con1}
    \Big(\fint_{B_{\frac{\eta}{2}}(p_0)}v_{\lambda_*}^\epsilon\Big)^\frac{1}{\epsilon}&\leq \mathcal{A}\eta^{-\alpha}\Big(\inf_{\Omega_{\lambda_*,\eta}}v_{\lambda_*}+\|(k_{\lambda_*}-k)^+g(u_{\lambda_*})+\mathcal{C}c^-(x)\mathrm{def}(k)\|_{L^n(\Omega)}\Big)\nonumber\\
    &\leq C\eta^{-\alpha}\Big(\inf_{\Omega_{\lambda_*,\eta}}(u_{\lambda_*}-u)+\mathrm{def}(k)\Big),
\end{align}
where $C=C(n,s,\|g\|_{C^{0,1}([0,C_0])},\|k\|_{L^\infty(\Omega)})>0$ is a constant and $\epsilon>0$ is obtained in Lemma \ref{PLD}. Without loss of generality, we may assume the constants in (\ref{Bdd}) and (\ref{Con1}) are the same, otherwise one can replace both of them by their maximum value. Now, using \eqref{Bdd} and (\ref{lowerbound}), we get
\begin{align}\label{Sreeja1}
    \Big(\fint_{B_{\frac{\eta}{2}}(p_0)}v_{\lambda_*}^\epsilon\Big)^\frac{1}{\epsilon}\geq \inf_{B_{\frac{\eta}{2}(p_0)}}v_{\lambda_*}&\geq \inf_{B_{\frac{\eta}{2}(p_0)}} (u_{\lambda_*}-u)\nonumber\\
    &=\inf_{x\in B_{\frac{\eta}{2}(p_0)}} (u(x_{\lambda_*})-(u(x)-u(\Tilde{x})))\nonumber\\
    &\geq \frac{1}{C_0}(1-|x_{\lambda_*}|)-\|\nabla u\|_{L^\infty(\Omega)}|x-\Tilde{x}|\nonumber\\
    &\geq \frac{1}{C_0}(1-|x_{\lambda_*}|)-\frac{5C}{2}\eta\nonumber\\
    &\geq \frac{1}{C_0} \min\{{ 1-|2\lambda_*-1+\frac{5\eta}{2}|, 1-|2\lambda_*-1+\frac{3\eta}{2}|\}}-\frac{5C}{2}\eta\nonumber\\
    &\geq \frac{(1-\lambda_1)}{2C_0}-\frac{5C}{2}\eta,
\end{align}
where $\Tilde{x}$ denotes the intersecting point of the line passing through the point $x$ along the direction $(1,0,...,0)$ and $\partial \Omega$. Combining \eqref{Con1} and \eqref{Sreeja1}, we obtain
\begin{align*}
    \inf_{\Omega_{\lambda_*,\eta}}(u_{\lambda_*}-u)\geq \Big( \frac{(1-\lambda_1)}{2C_0}-\frac{5C}{2}\eta\Big)\frac{\eta^\alpha}{C}-\mathrm{def}(k).
\end{align*}
Choose $\eta<\eta_1:=\frac{(1-\lambda_1)}{10C_0C}$, $\mathrm{def}(k)\leq \gamma_1:=\frac{\eta^\alpha(1-\lambda_1)}{8C_0C}$ and hence
\begin{align*}
     \inf_{\Omega_{\lambda_*,\eta}}(u_{\lambda_*}-u)\geq \frac{\eta^\alpha(1-\lambda_1)}{8C_0C}.
\end{align*}
For $0<\sigma< \frac{\eta^\alpha(1-\lambda_1)}{32C^2C_0}$, we have
\begin{align}\label{Sreeja2}
   (u_{\lambda_*-\sigma}-u)=(u_{\lambda_*-\sigma}-u_{\lambda_*}+u_{\lambda_*}-u)&\geq  \frac{\eta^\alpha(1-\lambda_1)}{8CC_0}-2\sigma\|\nabla u\|_{L^\infty(\Omega)}\nonumber\\
   &\geq  \frac{\eta^\alpha(1-\lambda_1)}{8CC_0}-2\sigma C\nonumber\\
   &\geq  \frac{\eta^\alpha(1-\lambda_1)}{16CC_0}\geq 0 \text{ in }\Omega_{\lambda^*,\eta}.
\end{align}
\textcolor{black}{Now, we denote $U_{\sigma,\eta}:=({\Omega}_{\lambda_*-\sigma}\cup R_{\lambda_*-\sigma}{\Omega}_{\lambda_*-\sigma})\setminus({\Omega}_{\lambda_*,\eta}\cup R_{\lambda_*}{\Omega}_{\lambda_*,\eta})$} and it is immediate to show the existence of a positive constant $\tau=\tau(n)>0$ such that 
\begin{align}\label{small}
    |U_{\sigma,\eta}|\leq \tau\eta,
\end{align}
for every $0<\sigma\leq \eta$. It is clear that $v_{\lambda_*-\sigma}:=(u-u_{\lambda_*-\sigma})$ is a weak solution of the equation 
\begin{align}\label{MA}
\textcolor{black}{-\De v_{\lambda_*-\sigma}+(-\De)^sv_{\lambda_*-\sigma}+c_{\lambda_*-\sigma}v_{\lambda_*-\sigma}=(k_{\lambda_*-\sigma}-k)g(u_{\lambda_*-\sigma}) \text{ in } U_{\sigma,\eta}}.
\end{align}
\textcolor{black}{Proceeding as in (\ref{Sanjit}), we can choose $\sigma,\eta>0$ small enough such that $|U_{\sigma,\eta}|\leq \delta$, where $\delta>0$ is obtained in \eqref{MAA1}. Consequently,
\begin{align*}
    \|v_{\lambda_*-\sigma}^+\|_{L^\infty(U_{\sigma,\eta})}\leq \mathcal{K}\mathrm{def}(k).
\end{align*}}
This fact, together with \eqref{Sreeja2} yields 
\begin{align}\label{contra}
    \|(u-u_{\lambda_*-\sigma})^+\|_{L^\infty(\Omega_{\lambda_*-\sigma})}\leq \mathcal{C} \mathrm{def}(k), \text{ for small enough } \sigma>0,
\end{align}
which violates the definition of $\lambda_*$. Consequently, $\lambda_*\leq \frac{1}{4}.$\\
\textbf{Step-III:} Here, we show that
\begin{align*}
    \lambda_*\leq (2CC_0C_1^\alpha\mathrm{def}(k))^\frac{1}{1+\alpha},
\end{align*}
where $C_1:=\max\{\frac{5CC_0}{2}, \frac{\tau}{4\delta}\}$ \textcolor{black}{and the constant $C$ is defined in (\ref{Con1}).} If possible, let $(2CC_0C_1^\alpha\mathrm{def}(k))^\frac{1}{1+\alpha}<\lambda_*\leq \frac{1}{4}$. Since $\lambda_*\leq \frac{1}{4}$, proceeding as in (\ref{Sreeja1}), we obtain that for every $0<\eta<\frac{1}{8}$, 
\begin{align}
     \Big(\fint_{B_{\frac{\eta}{2}(p_0)}}v_{\lambda_*}^\epsilon\Big)^\frac{1}{\epsilon}&\geq \frac{1}{C_0} \min\{{ 1-|2\lambda_*-1+\frac{5\eta}{2}|, 1-|2\lambda_*-1+\frac{3\eta}{2}|\}}-\frac{5C}{2}\eta\nonumber\\
     &\geq \frac{2\lambda_*}{C_0}-\frac{5C}{2}\eta,
\end{align}
where in the second inequality, we used the fact $\lambda_*\leq \frac{1}{4}$. Using \eqref{Con1}, for $0<\eta<\frac{1}{8}$ one has 
\begin{align*}
    \inf_{\Omega_{\lambda_*,\eta}}(u_{\lambda_*}-u)\geq \Big( \frac{2\lambda_*}{C_0}-\frac{5C}{2}\eta\Big)\frac{\eta^\alpha}{C}-\mathrm{def}(k).
\end{align*}
Choosing $\eta=\frac{\lambda_*}{C_1}$ to deduce
\begin{align}\label{Sreeja3}
     \inf_{\Omega_{\lambda_*,\eta}}(u_{\lambda_*}-u)\geq \frac{\lambda_*^{1+\alpha}}{CC_0C_1^\alpha}-\mathrm{def}(k).
\end{align}
Since $(2CC_0C_1^\alpha\mathrm{def}(k))^\frac{1}{1+\alpha}<\lambda_*$, (\ref{Sreeja3}) yields
\begin{align}\label{Sreeja4}
     \inf_{\Omega_{\lambda_*,\eta}}(u_{\lambda_*}-u)\geq \frac{\lambda_*^{1+\alpha}}{2CC_0C_1^\alpha}\geq 0.
\end{align}
Consequently, proceeding as in (\ref{Sreeja2}), one can choose small enough $\sigma>0$ such that
\begin{align*}
    u_{\lambda_*-\sigma}-u\geq \frac{\lambda_*^{1+\alpha}}{2CC_0C_1^\alpha}-2\sigma C\geq 0\text{ in }\Omega_{\lambda_*,\eta},
\end{align*}
Since $\eta\leq \frac{\delta}{\tau}$, (\ref{small}) yields $|U_{\sigma,\eta}|\leq \delta$, where $\delta$ is given in \textcolor{black}{\eqref{MAA1}}. Hence, by an argument analogous to that used to derive \textcolor{black}{\eqref{contra}}, we arrive at a contradiction. Consequently, we conclude that 
\begin{align*}
    \lambda_*\leq (2CC_0C_1^\alpha\mathrm{def}(k))^\frac{1}{1+\alpha}.
\end{align*}
\textbf{Step-IV:} In this step, we show that there exist positive constants $C$ and $\gamma$, depending on $n,s,\|k\|_{L^\infty(\Omega)},$ $\|g\|_{C^{0,1}[0,C_0]}$ such that $$|u(x_1,x')-u(-x_1,x')|\leq C\mathrm{def}(k)^\gamma,\text{ for all }x=(x_1,x')\in B_1\text{ with }x_1>0.$$ To this end, using the previous step, one has
$$\|(u-u_\lambda)^+\|_{L^\infty(\Omega_\lambda)}\leq \mathcal{C}\mathrm{def}(k),\text{ for every }\lambda\in[\lambda_2,1),$$
where $\lambda_2:=(2CC_0C_1^\alpha\mathrm{def}(k))^\frac{1}{1+\alpha}$ \textcolor{black}{with $C$ is given in inequality (\ref{Con1})}. If $x\in \Omega_{\lambda_2}$, we get
\begin{align*}
    u(x_1,x')-u(-x_1,x')&=u(x_1,x')-u_{\lambda_2}(x_1,x')+u_{\lambda_2}(x_1,x')-u(-x_1,x')\nonumber\\
    &\leq \|(u-u_{\lambda_2})^+\|_{L^\infty(\Omega_{\lambda_2})}+2\lambda_2\|\nabla u\|_{L^\infty(\Omega)}\nonumber\\
    &\leq \mathcal{C}\mathrm{def}(k)+C(2CC_0C_1^\alpha\mathrm{def}(k))^\frac{1}{1+\alpha}\leq C\mathrm{def}(k)^\frac{1}{1+\alpha},
\end{align*}
for some constant $C=C(n,s,C_0,\|k\|_{L^\infty(\Omega)},\|g\|_{C^{0,1}([0,C_0])})>0$.
For $x\in \Omega_0\setminus\Omega_{\lambda_2}$, we have
\begin{align*}
    u(x_1,x')-u(-x_1,x')&=u(x_1,x')-u(0,x')+u(0,x')-u(-x_1,x')\nonumber\\
    &\leq 2x_1\|\nabla u\|_{L^\infty(\Omega)}\leq 2\lambda_2\|\nabla u\|_{L^\infty(\Omega)}\leq C\mathrm{def}(k)^\frac{1}{1+\alpha},
\end{align*}
for some constant $C=C(n,s,C_0,\|k\|_{L^\infty(\Omega)},\|g\|_{C^{0,1}([0,C_0])})>0$. Consequently, we obtain
\begin{align}\label{Ma}
    u(x_1,x')-u(-x_1,x')\leq C\mathrm{def}(k)^\frac{1}{1+\alpha} \mbox{ for every } x=(x_1,x')\in \Omega \mbox{ with }x_1>0,
\end{align}
for some constant $C=C(n,s,C_0,\|k\|_{L^\infty(\Omega)},\|g\|_{C^{0,1}([0,C_0])})>0$. By a similar argument, we can deduce
\begin{align}\label{Maa}
    u(-x_1,x')-u(x_1,x')\leq C\mathrm{def}(k)^\frac{1}{1+\alpha}, \mbox{ for every } x=(x_1,x')\in \Omega \mbox{ with }x_1>0,
\end{align}
for some constant $C>0$, depending on $n,s,C_0,\|k\|_{L^\infty(\Omega)}, \|g\|_{C^{0,1}[0,C_0]}$. Combining (\ref{Ma}) and (\ref{Maa}), we obtain our desired estimate with $\gamma=\frac{1}{1+\alpha}$.
\section*{Acknowledgment}
This work was carried out during the author's FARE (Fellowship for Academic and Research Excellence) tenure at the Indian Institute of Technology Kanpur. The author gratefully acknowledges the financial support from the FARE Fellowship (ID: FA2408013) and thanks the Institute for awarding the fellowship.  Furthermore, he thanks the anonymous reviewer for his suggestions and comments, which have significantly improved the clarity of the manuscript.
\nocite{*}
\bibliographystyle{plain}
\bibliography{Reference.bib}
Sanjit Biswas\\
Department of Mathematical Sciences\\
Indian Institute of Science Education and Research Berhampur\\
Berhampur, Odisha-760003, India\\
Email: sanjitbiswas410@gmail.com
\end{document}